\theoremstyle{plain}
\newtheorem{thm}{Theorem}[section]
\newtheorem*{conj*}{Conjecture}
\newtheorem{dfn}[thm]{Definition}
\newtheorem{lemma}[thm]{Lemma}
\newtheorem{prop}[thm]{Proposition}
\newtheorem{cor}[thm]{Corollary}
\newtheorem{problem}{Problem}
\newtheorem{THM}{Theorem}
\theoremstyle{remark}
\newtheorem{example}[thm]{Example}
\newtheorem{remark}[thm]{Remark}
\newtheorem*{thmC}{\textbf{Theorem C}}
\newcommand{\Log}{\textsf{Log}}
\newcommand{\mb}{\mathbb}
\newcommand{\mc}{\mathcal}
\newcommand{\C}{\mb C}
\newcommand{\Pj}{\mb P}
\newcommand{\F}{\mc F}
\newcommand{\G}{\mc G}
\newcommand{\D}{\mc D}
\newcommand{\h}{\mc H}
\newcommand{\TF}{{T_\F}}
\newcommand{\TD}{{T_\D}}
\newcommand{\TG}{{T_\G}}
\newcommand{\Th}{{T_\h}}
\newcommand{\TM}{{T_M}}
\DeclareMathOperator{\codim}{codim}
\DeclareMathOperator{\sing}{sing}
\DeclareMathOperator{\rank}{rank}
\DeclareMathOperator{\Aut}{Aut}
\DeclareMathOperator{\Ker}{Ker}
\DeclareMathOperator{\supp}{Supp}
\newenvironment{acknowledgement}{\textbf{Acknowledgments}}
\numberwithin{equation}{section}
\numberwithin{equation}{section}       
\title{Splitting aspects of holomorphic distributions with locally free tangent sheaf}
\author[R.C.~da~Costa]{Raphael Constant da Costa}
\address{UERJ, Universidade do Estado do Rio de Janeiro, Rua São Francisco ˜Xavier, 524, Maracanã, 20550-900, Rio de Janeiro, Brazil. }
\email{raphaelconstant@ime.uerj.br}
\keywords{Holomorphic distribution; locally free tangent sheaf; split tangent sheaf}
\subjclass[2020]{32M25, 32S65}
\begin{document}

\begin{abstract}
In this work, we mainly deal with a two-dimensional singular holomorphic distribution $\D$ defined on $M$, where $M$ represents a complex manifold of dimension $n \geq 3$ or a germ of it, whose tangent sheaf $\TD$ is locally free. As is well known, when $M=\mathbb{P}^n$ or $M=(\mathbb{C}^n,0)$, there is a one-dimensional foliation $\G$ on $M$ tangent to $\D$ and we study whether $\TD$ splits starting from it. In both cases, we provide sufficient conditions on $\G$ so that there is another one-dimensional foliation $\h$ on $M$ tangent to $\D$, such that their respective tangent sheaves satisfy the splitting relation $\TD=\TG \oplus \Th$. We introduce a concept of local division of $\D$ by $\G$, exhibiting a characterization of $\mathcal{S}(\G,\D)$, the set of points $p \in M$ where $\G$ does not locally divide $\D$ at $p$. Furthermore, for $M=\mathbb{P}^n$ we prove that the existence of such $\h$ is equivalent to $\mathcal{S}(\G,\D)=\emptyset$. Additionally, given a codimension one holomorphic foliation $\mathcal{F}$ on $\mathbb{P}^3$ with locally free tangent sheaf, we show that $\TF$ splits provided there exists a nonzero holomorphic vector field on $\mathbb{P}^3$ tangent to $\mathcal{F}$. We obtain division results involving holomorphic differential forms and vector fields, and some of them could serve as alternatives to classical results coming from the De Rham-Saito Division Lemma, while others can be applied in situations not covered by the latter.
\end{abstract}

\maketitle

\setcounter{tocdepth}{1}

\tableofcontents

\section{Introduction}

Let $\D$ be a singular holomorphic distribution of dimension $k$ on $\Pj^n$, $n \geq 3$. Assume that its tangent sheaf $\TD$ is locally free. A natural and important question is to determine whether there exists a splitting relation as
\[
\TD=\bigoplus_{i=1}^k\mathcal{O}_{\Pj^n}(e_i),
\]
for some integers $e_i$. 

This problem becomes particularly interesting in the case where the distribution is integrable, i.e., when it defines a foliation, denoted by $\mathcal{F}$ instead of $\mathcal{D}$. The importance of this matter can be seen in \cite{CaCeGiLi} and \cite{CukiermanPereira}, where it is proved that some foliations with a special type of singular set and split tangent sheaf are stable under small deformations. This makes it possible to obtain irreducible components of the space of foliations on projective spaces, which can also be found in \cite{Lizarbe17} and \cite{Constant}.

For codimension one foliations on $\Pj^3$, in \cite{GiraldoCollantes} it was proved that the tangent sheaf of $\F$ is locally free if and only if $\sing(\F)$ (considered as a scheme) is a curve. Furthermore, $\TF$ splits if and only if $\sing(\F)$ is an \textit{arithmetically Cohen–Macaulay} curve. These results were generalized later in \cite{MaMaRe2015} for higher dimensions.

The following question was posed in \cite{CaCeGiLi}:

\begin{problem}\label{P:loc_split_problem}
Let $\F$ be a codimension one foliation on $\mathbb{P}^3$, where $\TF$ is locally free. Does $\TF$ split?
\end{problem}

In \cite{CalvoArielFederico2019}, the authors provide sufficient conditions for a codimension one foliation on $\Pj^3$ with locally free tangent sheaf to split. They use the relation between the Kupka scheme and the unfoldings ideal associated to a codimension one foliation. They also show that if $\F$ is a codimension one foliation on $\Pj^n$ such that $\sing(\F)$ is smooth of dimension $(n-2)$, then $\TF$ splits (in fact it is a degree $0$ rational foliation), generalizing \cite[Theorem 3.11]{10.1093/imrn/rny251}. In \cite{calvoandrade2024dimension}, the authors show that if $\D$ is a two-dimensional distribution on $\Pj^4$ of degree at most $2$ with locally free tangent sheaf such that $\sing(\D)$ (considered as a scheme) has pure dimension one, then $\TD$ splits.

Regarding Problem~\ref{P:loc_split_problem}, it is well known that there are nonintegrable distributions whose tangent sheaf is locally free and does not split. We can take, for example, a regular distribution of codimension one and degree $0$ on $\Pj^3$, i.e., a holomorphic contact structure on $\Pj^3$. Very recently, it was given in \cite{FaMaJe2023} an answer (in the negative) to Problem~\ref{P:loc_split_problem}.

\subsection{Statement of the main results}

We know that there are foliations $\F$ on projective spaces such that $\TF$ is locally free but $\TF$ does not split, see \cite{FaMaJe2023}. Given a foliation $\F$ on $\mathbb{P}^n$ with locally free tangent sheaf, we would like to find extra conditions for $\TF$ to split. Although part of our original analysis was for codimension $1$ foliations on $\mathbb{P}^3$, it naturally extends to distributions of dimension $2$ on $\mathbb{P}^n$, $n \geq 3$. Most of the time we deal with the following:

\begin{problem}\label{P:paper_problem}
Let $M$ represent $\mathbb{P}^n$ or $(\mathbb{C}^n,0)$. Given a two-dimensional distribution $\D$ on $M$ with locally free tangent sheaf, and a one-dimensional foliation $\G$ on $M$ tangent to $\D$, under what conditions can we guarantee that there is another one-dimensional foliation $\h$ on $M$ tangent to $\D$ so that $\TD=\TG \oplus \Th$?
\end{problem}

\begin{remark}
For the sake of clarity, by a $k$-dimensional distribution $\D$ on $(\mathbb{C}^n,0)$ we mean a germ of a $k$-dimensional distribution $\D$ at $0$. Such germ can be represented by a $k$-dimensional distribution $\D_U$ defined on a sufficiently small polydisc $U$ containing $0$. We write $\TD$ to represent $(T_{\mathcal{D}_U})_0$, the stalk of the sheaf $T_{\mathcal{D}_U}$ at $0$. Thus, in this case $\TD$ represents a $\mathcal{O}_{\C^n,0}$-module rather than a sheaf. We say that the tangent sheaf of $\D$ is locally free if there exists an open set $V \subset U$ such that the tangent sheaf of the restriction of $\D_U$ to $V$ is locally free. Since $T_{\mathcal{D}_U}$ is a coherent sheaf, it follows that the tangent sheaf of $\D$ is locally free if and only if $\TD$ is a free $\mathcal{O}_{\C^n,0}$-module.
\end{remark}

\begin{remark}
If $\mathcal{D}$ is a singular holomorphic distribution on a complex manifold $M$, and $\mathcal{G}$ is a one-dimensional foliation on $M$ tangent to $\mathcal{D}$, then the pair $(\mathcal{G},\mathcal{D})$ is a particular case of a $2$-flag of holomorphic distributions; see \cite{MR4520047} and the references therein.
\end{remark}

We start by approaching Problem~\ref{P:paper_problem} in local terms.

\begin{dfn}
Let $v$ be a germ of a holomorphic vector field defined at $0 \in \C^n$. Assume that $0$ is a singularity of $v$, i.e., $v(0)=0$. We define the linear rank of $v$ (at $0$) as the rank of the linear transformation $Dv(0)$. 
\end{dfn}

Writing $\hat{v}_1$ for the linear part of $v$, observe that the linear rank of $v$ is at most $1$ if and only if $\hat{v}_1=fu$, where $f$ is a homogeneous polynomial of degree one in $\C^n$ and $u$ is a germ of a constant vector field defined at $0 \in \C^n$. Thus, a germ of a vector field with a singularity at $0$ and linear rank at most $1$ is, in some sense, degenerate.

Note that if $\varphi:(\C^n,0) \to (\C^n,0)$ is a germ of biholomorphism and $u=\varphi_* v$, then the linear maps $Dv(0)$ and $Du(0)$ are similar. As a consequence, $v$ and $u$ have the same linear rank. This invariance allows us to define the linear rank of a vector field $v$, defined on a complex manifold, at a singularity of $v$ in a natural way.

Moreover, if $f$ is a germ of holomorphic function defined at $0$ with $f(0) \neq 0$, then the linear ranks of $v$ and $u=fv$ are the same. This allows one to define the linear rank of a one-dimensional foliation $\G$, defined on a complex manifold, at a singularity of $\G$ in the obvious way. Finally, if $\sing(\G) \neq \emptyset$, we also define the linear rank of $\G$ as the least possible value of the rank of $\G$ at $p$ for $p \in \sing(\G)$.

The following results provide answers to Problem~\ref{P:paper_problem}, the first one being in local terms.

\begin{THM}\label{T:A}
Let $\D$ be a germ of a $k$-dimensional distribution at $0 \in \mathbb C^n$ with locally free tangent sheaf, and let $\G$ be a germ of a nonregular one-dimensional foliation at $0$, tangent to $\D$ and with linear rank at least $k$, where $n \geq 3$ and $1 \leq k < n$. Then exactly one of the following assertions holds:
\begin{enumerate}
\item\label{1:T:A} There exist germs of one-dimensional foliations $\h_1,\ldots,\h_{k-1}$ defined at $0$ such that
\[
\TD=\TG \oplus T_{\h_1}\oplus \cdots \oplus T_{\h_{k-1}}.
\] 
\item\label{2:T:A} The germ of distribution $\D$ is regular. In this case, the linear rank of $\G$ is $k$.
\end{enumerate}
Moreover, under the assumptions of the theorem, \eqref{1:T:A} is equivalent to $\sing(\G) \subset \sing(\D)$.
\end{THM}

In the context of Theorem~\ref{T:A}, a careful look at its proof shows that if $\G$ is regular, then the equality between $\mathcal{O}_{\C^n,0}$-modules appearing in part \eqref{1:T:A} still holds. The reason for excluding this case is twofold: We have defined the linear rank only for germs of vector fields with a singularity at $0$; moreover, regarding part \eqref{2:T:A}, $\D$ could be either regular or not, depending on the situation. Thus, the two parts would no longer be mutually exclusive.

\begin{THM}\label{T:B}
Let $\D$ be a two-dimensional distribution on $\Pj^n$, $n \geq 3$, with locally free tangent sheaf. Assume that there exists a one-dimensional foliation $\G$ on $\Pj^n$ tangent to $\D$ and with linear rank at least $2$. Then the following are equivalent:
\begin{enumerate}
\item\label{1:T:B} Every irreducible component of $\sing(\G)$ of dimension $n-2$ intersects $\sing(\D)$.
\item\label{2:T:B} Every irreducible component of $\sing(\G)$ intersects $\sing(\D)$.
\item\label{3:T:B} $\sing(\G) \subset \sing(\D)$.
\item\label{4:T:B} There is a one-dimensional foliation $\h$ on $\mathbb{P}^n$ such that $\TD=\TG \oplus \Th$.
\end{enumerate}
In particular, if $\D$ is a two-dimensional distribution with locally free tangent sheaf on $\mathbb{P}^n, n\geq 4$, such that $\dim(\sing(\D))\geq 2$, and there exists a one-dimensional foliation $\G$ tangent to $\D$ with linear rank at least $2$, then $\TD$ splits.
\end{THM}

Regarding Theorem~\ref{T:B}, there is a stronger version if we assume that the linear rank of $\G$ is at least $3$, see the particular case of Theorem~\ref{T:version_lr_general} below. Another result in the spirit of Theorem~\ref{T:B}, without the assumption that $\TD$ is locally free, can be found in Corollary~\ref{C:T:B_without_loc_free}.

As an application of the results obtained, we have the following:

\begin{THM}\label{T:structure}
Let $\F$ be a codimension one holomorphic foliation on $\mathbb P^3$ with locally free tangent sheaf. If there exists a nonzero holomorphic vector field on $\mathbb P^3$ tangent to $\F$, then $\TF$ splits.
\end{THM}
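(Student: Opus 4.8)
The plan is to reduce the statement to Theorem \ref{T:B} applied to $\D=\F$ on $\Pj^3$ (so $n=3$). Let $v$ be the nontrivial holomorphic vector field tangent to $\F$. By Remark \ref{R:vec_field_def_fol}, after discarding its divisor of zeros, $v$ induces a one-dimensional foliation $\G$ on $\Pj^3$ tangent to $\F$, and since $\G$ is one-dimensional its tangent sheaf is the line bundle $\TG=\Ol_{\Pj^3}(1-\deg\G)$. A global holomorphic field on $\Pj^3$ lifts to a linear field $V=\sum_{i,j}a_{ij}x_j\,\partial_{x_i}$ on $\C^4$, i.e.\ to a matrix $A$ (defined modulo the radial field), and one checks $\deg\G\in\{0,1\}$: one has $\deg\G=0$ exactly when $v$ vanishes along a hyperplane (equivalently $A-\lambda\,I$ has rank one for some $\lambda$), and $\deg\G=1$ otherwise. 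The point of the reduction is that both summands produced by Theorem \ref{T:B} are line bundles: once we obtain a one-dimensional foliation $\h$ with $\TF=\TG\oplus\Th$, we get $\TF=\Ol_{\Pj^3}(1-\deg\G)\oplus\Ol_{\Pj^3}(1-\deg\h)$, which is precisely a splitting. Thus everything comes down to verifying the two hypotheses of Theorem \ref{T:B}: that $\G$ has linear rank at least $2$, and that one of its equivalent conditions (1)--(3) holds.

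For the linear rank, I would analyze $\sing\G$ through the eigenstructure of $A$. Its singular points are the eigenpoints $[p]$, and the linear part of $\G$ at $[p]$ (with $Ap=\lambda p$) is the endomorphism induced by $A-\lambda\,I$ on $T_{[p]}\Pj^3=\C^4/\langle p\rangle$; its rank is $\geq 2$ whenever $\lambda$ has geometric multiplicity $1$, and also along the invariant lines $\mathbb P(\ker(A-\lambda\,I))$ coming from multiplicity-$2$ eigenvalues, with the exception of those eigenpoints that are images of generalized eigenvectors. In the degenerate case $\deg\G=0$ the foliation is the pencil of lines through a point and has linear rank $3$. Hence in all but a short list of nilpotent-type Jordan configurations of $A$ we have linear rank $\geq 2$ and Theorem \ref{T:B} (or its sharper form Theorem \ref{T:version_lr_3} when the rank is $3$) is available; the residual configurations, possessing a linear-rank-$1$ eigenpoint, I would treat separately, reducing them via the hyperplane factorization $v\mapsto v/(\text{linear form})$ to a lower-degree $\G$ or exhibiting $\h$ directly.

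It then remains to check condition (2) of Theorem \ref{T:B}, namely that every one-dimensional component of $\sing\G$ meets $\sing\F$. Since $\TF$ is locally free, $\sing\F$ is a curve by \cite{GiraldoCollantes}. Because $v$ is tangent to $\F$, its flow $\phi_t=\exp(tv)$ is a one-parameter subgroup of $\Aut(\Pj^3)$ preserving each leaf, hence preserving $\F$ and the curve $\sing\F$; the one-dimensional components of $\sing\G$ are the invariant lines $L=\mathbb P(\ker(A-\lambda\,I))$, which $\phi_t$ fixes pointwise. If some such $L$ were disjoint from $\sing\F$, then $\F$ would be regular along $L$, forcing $v|_L\equiv 0$ as a section of the subbundle $\TF|_L\subset T\Pj^3|_L$, i.e.\ $v$ would vanish on all of $L$. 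I would rule this out by combining the invariance of $\sing\F$ under $\phi_t$ (a Borel fixed-point argument for the $\phi_t$-invariant complete curve $\sing\F$) with the numerical constraints that local freeness of $\TF$ imposes on the degeneracy locus, so that each line $L$ is forced to meet $\sing\F$. Granting this, condition (2) holds and Theorem \ref{T:B} yields $\TF=\TG\oplus\Th$, so $\TF$ splits.

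The main obstacle is exactly this last intersection statement. There is no purely local reason for a component of $\sing\G$ to lie in $\sing\F$: Theorem \ref{T:A} explicitly permits the alternative that $\F$ is regular at a rank-$2$ singular point of $\G$ (for instance $v=x\partial_x-y\partial_y$ tangent to a regular foliation), so condition (3) can fail point by point and must instead be established globally, using the flow-invariance of $\sing\F$ together with the Chern-class geometry of the locally free sheaf $\TF$. The degenerate Jordan configurations with a linear-rank-$1$ eigenpoint constitute a secondary difficulty, which I expect to dispatch through the degree reduction indicated above.
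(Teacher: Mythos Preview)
Your overall strategy---reduce to Theorem \ref{T:B} via a Jordan-form case analysis of the matrix $A$---matches the paper's, but two of your steps are genuine gaps rather than routine verifications.

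\textbf{The intersection condition (2).} Your flow-invariance sketch does not establish that \emph{every} one-dimensional component of $\sing(\G)$ meets $\sing(\F)$. A Borel fixed-point argument applied to the $\phi_t$-invariant curve $\sing(\F)$ yields a fixed point of $\phi_t$ on $\sing(\F)$, i.e.\ a point of $\sing(\F)\cap\sing(v)$, but in Jordan type (II) (two equal nonzero eigenvalues and two zero eigenvalues) $\sing(v)$ is the disjoint union of the two lines $\{x_0=x_1=0\}$ and $\{x_2=x_3=0\}$, and you need each of them to meet $\sing(\F)$; the fixed-point argument cannot tell you on which line the point lies. Your sentence ``forcing $v|_L\equiv 0$'' is circular: $L$ is by definition a component of $\sing(v)$, so $v|_L=0$ is the hypothesis, not a contradiction. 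The paper bypasses all this with a direct calculation: writing $i_v\Omega=0$ and $i_R\Omega=0$ one finds, for instance, $\Omega(0,0,x_2,x_3)=C(x_2,x_3)(x_3\,dx_2-x_2\,dx_3)$ with $\deg C\geq 1$ (this uses $\deg\F\geq 1$), so $\Omega$ visibly vanishes somewhere on the line $\{x_0=x_1=0\}$. This is elementary and works uniformly in cases (I)--(IV).

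\textbf{The nilpotent cases.} Your proposed ``hyperplane factorization $v\mapsto v/(\text{linear form})$'' does not apply to the linear-rank-$1$ configurations. In Jordan types (V) ($v=x_1\partial_{x_0}+x_3\partial_{x_2}$) and (VI) ($v=x_1\partial_{x_0}+x_2\partial_{x_1}$) one has $\codim\sing(v)=2$, so $v$ has no divisorial zeros to factor out and $\deg\G=1$ already. These cases also fall outside Theorem \ref{T:B} because $\G$ has a singular point of linear rank $1$ (at $(1{:}0{:}0{:}0)$). The paper handles them with an entirely different mechanism: since $v$ is nilpotent one has $L_v\Omega=0$, and a combinatorial analysis of this relation on the monomial basis of degree-$(d{+}1)$ one-forms forces the algebraic multiplicity of $\F$ at $(1{:}0{:}0{:}0)$ to be at least $2$ whenever $d\geq 2$; a separate local computation (Lemma \ref{L:divides_in_one_point}) then shows that if $v$ failed to divide $\F$ at that point the multiplicity would be exactly $1$, a contradiction. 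The residual low-degree cases $d\leq 1$ (and $d\leq 3$ for type (VI)) are finished by explicit computer-assisted calculations exhibiting a second, nonnilpotent, holomorphic field tangent to $\F$. None of this is visible from your outline, and there is no shortcut: the nilpotent types are precisely where the integrability of $\F$ and the local-freeness of $\TF$ are genuinely used.
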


For a codimension one distribution $\D$ on $\Pj^3$ with locally free tangent sheaf, we also show that $\TD$ splits, provided that $\deg(\D) \geq 1$ and there exists a nonnilpotent holomorphic vector field on $\Pj^3$ tangent to $\D$, see Corollary~\ref{C:result_for-distrib}.

Finally, our computations lead us to some division results involving vector fields and differential forms. See Corollary~\ref{C:crite_div_loc_free} and Propositions~\ref{P:new_division_result} and \ref{P:new_div_dual}. They are easily verifiable in the sense that it is equivalent to finding the rank of a linear transformation.

\subsection{Notation} We will denote by:
\begin{enumerate}[-]
\item $M$: a complex manifold of dimension $n$. In the vast majority of cases, we will assume that $n \geq 3$. In some cases, we will have $M=\mathbb{P}^n$, $n \geq 3$.
\item $(N,p)$: the germ of a complex analytic subset $N \subset M$ at a point $p \in M$. Most of the time we will deal with $(\C^n,0)$.
\item $\Omega^k(U)$ and $\mathcal{X}(U)$: the sets of holomorphic $k$-forms and holomorphic vector fields on $U$, respectively, where $U$ is $(M,p)$ or an open set of $M$.
\item $\mathcal{O}_M$: the sheaf of germs of holomorphic functions on $M$. We also denote by $\mathcal{O}_M^*$ the sheaf of nonvanishing holomorphic functions on $M$.
\item $\mathcal{O}_{M,p}$: The stalk of $\mathcal{O}_M$ at $p \in M$, i.e., the ring of germs of holomorphic functions defined at $p$. We also denote by $\mathcal{O}^*_{M,p}$ the stalk of $\mathcal{O}_M^*$ at $p \in M$.
\item Given $v \in \mathcal{X}(\C^n,0)$, we will write $v=\sum_{i=0}^{\infty} \hat{v}_i$ for the decomposition of $v$ into homogeneous polynomials vector fields, i.e., for each $i=0,1,2,\ldots$, $\hat{v}_i$ is a vector field with coefficient polynomials which are homogeneous of degree $i$. In parallel, we write $\omega=\sum_{i=0}^{\infty}\hat{\omega}_i$ for $\omega \in \Omega^k(\C^n,0)$.
\end{enumerate}

\

\begin{acknowledgement}
The author is supported by CNPq (Grant numbers 402936/2021-3 and 408687/2023-1), an also by the UERJ Prodocência Program. I would like to thank the anonymous referees for many important comments and suggestions, which helped to improve the manuscript. In particular, Proposition~\ref{P:eq_subbundle} originated from a question posed by one of the referees. We have used the software Maple (Version 18.02) on five occasions to solve huge systems of linear equations.
\end{acknowledgement}

\section{Basic definitions}
Let $M$ be a complex manifold of dimension $n$ and $1 \leq k < n$. A singular holomorphic distribution $\D$ of dimension $k$ (or codimension $n-k$) can be defined by a family $\{(U_j,\omega_j)\}_{j \in J}$ where:
\begin{enumerate}
\item 
$\mathcal{U}=\{U_j\}_{j \in J}$ is an open covering of $M$.
\item $\omega_j \not\equiv 0$ is a holomorphic ($n-k$)-form defined on the open set $U_j$, which is locally decomposable off the singular set (LDS); see \cite[Definition~1.2.1]{MR1842027}. By definition, this means that for each point $z \in U_j \setminus \sing(\omega_j)$, where $\sing(\omega_j):=\{z \in U_j: \omega_j(z) =0\}$, there exist a neighborhood $V \subset U_j$ of $z$ and holomorphic $1$-forms $\eta_1,\ldots,\eta_{n-k}$ on $V$ such that 
\begin{equation}\label{E:loc_dec}
{\omega_j}_{|V}=\eta_1 \wedge \cdots \wedge \eta_{n-k}.
\end{equation}
\item If $U_{ij}:=U_i \cap U_j \neq \emptyset$, there is a nonzero holomorphic function defined on $U_{ij}$, $h_{ij} \in \mathcal{O}_M^* (U_{ij})$, satisfying
\begin{equation}\label{E:compatibility}
{\omega_i}_{|U_{ij}}=h_{ij}{\omega_j}_{|U_{ij}}.
\end{equation}
\item
$\codim (\sing (\omega_j)) \geq 2,\forall j \in J$.
\end{enumerate}

The singular set of $\D$ is the analytic subset of $M$ given by 
\[
\sing(\D)=\bigcup_j \{z \in M : \omega_j(z)=0\}.
\]
Note that the definition of $\sing(\D)$ is possible due to \eqref{E:compatibility}. If $\sing(\D) = \emptyset$, we say that the distribution $\D$ is regular.

If $T_p M$ denotes the holomorphic tangent space of $M$ at $p \in M$, it follows that we can define on $M \setminus \sing(\D)$ a distribution of $k$-planes
\[
\D(p)=\{v \in T_p M:i_v \omega_j(p)=0\},
\]
whenever $p \in U_j \setminus \sing(\D)$. In fact, if $\eta_1,\ldots,\eta_{n-k}$ are as in \eqref{E:loc_dec}, we have
\[
\D(p)=\bigcap_{1 \leq l \leq n-k}\Ker(\eta_l (p)).
\]

About decomposition \eqref{E:loc_dec}, if additionally for every $j \in J$ the following Frobenius integrability condition is satisfied on $V$:
\begin{equation}\label{E:integrability}
d\eta_i \wedge \omega_j=0,i=1,\ldots,n-k,
\end{equation}
we say that the distribution $\D$ is integrable, and call it a holomorphic singular foliation of dimension $k$ on $M$. In this case we usually denote such a distribution by $\F$. 

A foliation $\F$ provides a decomposition of $M \setminus \sing(\F)$ into a union of disjoint connected immersed submanifolds $\{\mathcal{L}_\alpha\}_{\alpha \in A}$ of dimension $k$, called the leaves of $\F$, in such a way that if $p \in \mathcal{L}_{\alpha}$, then the tangent space to $\mathcal{L}_{\alpha}$ at $p$ is given by $T_p \mathcal{L}_{\alpha}=\D(p)$.

\begin{remark}\label{R:iso_X_O}
We will frequently use the well-known fact that, given $p \in M$ and a nonvanishing germ of a top-degree form $\Theta \in \Omega^n(M,p)$, there is an isomorphism of $\mathcal{O}_{M,p}$-modules between $\mathcal{X}(M,p)$ and $\Omega^{n-1}(M,p)$ given by $X \mapsto i_X \Theta$. From this, it is also possible to show that if $U \subset M$ is an open set for which there exists a nonvanishing $\Theta \in \Omega^n(U)$, then the map $\mathcal{X}(U) \to \Omega^{n-1}(U)$ given by $X \mapsto i_X \Theta$ is an isomorphism of $\mathcal{O}_M(U)$-modules.
\end{remark}

In the case $k=1$, in view of Remark~\ref{R:iso_X_O}, by shrinking the open sets $U_j$ if necessary, we may assume that $\omega_j \in \Omega^{n-1}(U_j)$ is of the form
\begin{equation}\label{E:one_form_vector}
\omega_j = i_{v_j}\Theta_j, 
\end{equation}
where $v_j \in \mathcal{X}(U_j)$ and $\Theta_j \in \Omega^n(U_j)$ is nonvanishing. 

On $U_{ij}$, we have $\Theta_i = f_{ij}\Theta_j$, where $f_{ij} \in \mathcal{O}_M^*(U_{ij})$. Together with \eqref{E:compatibility}, this shows that a one-dimensional distribution $\F$ can equivalently be described by a collection of vector fields $v_j$ defined on the open sets $U_j$ of an open covering $\mathcal{U}$ of $M$, with $\codim(\sing(v_j)) \geq 2$, and nonvanishing holomorphic functions $g_{ij} \in \mathcal{O}_M^*(U_{ij})$ satisfying
\[
{v_i}_{|U_{ij}} = g_{ij}{v_j}_{|U_{ij}}.
\]

Observe that the relation $g_{ij}=h_{ij} / f_{ij}$ on $U_{ij}$ establishes the connection between the two definitions of a one-dimensional distribution, namely those given by $(n-1)$-forms and by vector fields. From \eqref{E:one_form_vector}, we obtain that
\[
\sing(\D)=\bigcup_j \{z \in M : v_j(z)=0\}.
\]

It is well known that a one-dimensional distribution is always integrable, see also Example~\ref{Ex:dist_1_inte} below. The integral curves of all vector fields $v_j$ are glued together forming the leaves of $\F$.

\begin{remark}\label{R:vec_field_def_fol}
It is clear from the definition that the singular set of a distribution on a complex manifold $M$ always has codimension at least $2$. Throughout this article, whenever we say that a $k$-dimensional distribution on an open set $U \subset M$ is defined by $\omega \in \Omega^{n-k}(U)$, we always assume that $\codim(\sing(\omega)) \geq 2$. In some situations, we will consider a nonzero meromorphic vector field $v$ on $M$, possibly with codimension one zeros, defining a one-dimensional foliation $\mathcal{G}$ as follows: For every $p \in M$, there exists a neighborhood $U_p$ of $p$ such that $v=f \tilde{v}_p$, where $f$ is a meromorphic function and $\tilde{v}_p$ is a holomorphic vector field, both defined on $U_p$, with $\codim(\sing(\tilde{v}_p)) \geq 2$. The one-dimensional foliation $\G$ is given by the collection $\{(U_p,\tilde{v}_p)\}_{p \in M}$. Of course, $v$ defines a one-dimensional (possibly nonregular) foliation $\h$ on $M \setminus (\supp(D_0) \cup  \supp(D_{\infty}))$, where $\supp(D_0)$ and $\supp(D_{\infty})$ denote the support of the divisor of zeros and poles of $v$, respectively. We say that $\G$ is the extension of $\h$ to $M$.
\end{remark}

Denote by $\TM$ the tangent sheaf of $M$. The tangent sheaf of $\D$, denoted by $\TD$, is the subsheaf of $\TM$ whose sections over an open set $U \subset M$ is given by
\[
\TD(U)=\{v \in \TM(U): i_v \omega_j=0 \text{ on }U \cap U_j\}.
\]
Note that $(\TD)_p$, the stalk of $\TD$ at $p \in M$, consists of germs of vector fields at $p$ tangent to $\D$. 

\begin{remark}\label{R:tang_sheaf_properties}
Recall that a theorem due to Oka guarantees that a holomorphic sheaf on $M$ is coherent if it is given locally as the kernel of a morphism between free holomorphic sheaves. After writing the definition of $\TD$ in local coordinates, we see that $\TD$ is given locally by a finite intersection of the kernels of morphisms between free holomorphic sheaves. This implies that $\TD$ is coherent, since it is given locally by a finite intersection of coherent sheaves. From the definition of $\TD$, a straightforward verification shows that the coherent sheaf $\TM/\TD $ is torsion-free. Furthermore, since $T_M$ is locally free and $\TM/\TD $ is torsion-free, we have that $\TD$ is a reflexive coherent sheaf, see \cite[Proposition~1.1]{Hartshorne1980}. In this way, we recover the well-known definition of a singular distribution found, for instance, in \cite{10.1093/imrn/rny251}: A distribution $\D$ on $M$ is given by a coherent subsheaf $\TD$ of $\TM$ such that the quotient $\TM/\TD $ is torsion-free.
\end{remark}

\subsection{Distributions on $\Pj^n$}
For a $k$-dimensional distribution $\D$ on $\mathbb{P}^n$, there is an important discrete invariant called the degree of the distribution, denoted by $\deg(\D)$. By definition, it is the degree of the locus of tangency of $\D$ with a generic linear subspace $\Pj^{n-k} \subset \Pj^n$. The distribution $\D$ can be defined in $\C^{n+1}$ by a homogeneous polynomial ($n-k$)-form
\[
\Omega=\sum_{0\leq i_1 < \cdots < i_{n-k} \leq n}A_{i_1,\ldots,i_{n-k}}(x_0,\ldots,x_n)dx_{i_1} \wedge \cdots \wedge dx_{i_{n-k}}
\]
which is LDS and satisfies $i_R \Omega=0$, where
\[
R=x_0 \frac{\partial }{\partial x_0}+\cdots+x_n \frac{\partial }{\partial x_n}
\]
is the radial vector field, $A_{i_1,\ldots,i_{n-k}}(x_0,\ldots,x_n)$ is a homogeneous polynomial of degree $\deg(\D)+1$ and $\codim (\sing (\Omega)) \geq 2$, see for example \cite{CukiermanPereira}. The integrability of $\D$ is equivalent to the integrability of $\Omega$.

The singular set of $\D$ is the image, under the canonical projection $\Pi:\C^{n+1} \setminus \{0\} \to \mathbb{P}^n$, of the singular set of $\Omega$ in $\C^{n+1}$. Similarly, the $k$-planes of the distribution $\D$ on $\Pj^n$ are the images, under the same map, of the ($k+1$)-planes on $\C^{n+1}$ induced by $\Omega$. If $\D$ is integrable, a similar result holds for its leaves.

When $k=1$, alternatively a one-dimensional foliation $\F$ on $\Pj^n$ can be defined in $\C^{n+1}$ by the vector field
\[
V=V_0(x_0,\ldots,x_n)\frac{\partial}{\partial x_0}+\cdots+V_n(x_0,\ldots,x_n)\frac{\partial}{\partial x_n},
\]
where $V_i(x_0,\ldots,x_n)$, $i=0,\ldots,n$, is a homogeneous polynomial of degree $\deg(\F)$. Although necessary, the condition $\codim(\sing(V)) \geq 2$ is not sufficient to define such a one-dimensional foliation. In fact, $\sing(\F)$ is the image, under $\Pi$, of the set
\[
\{ p \in \mathbb C^{n+1} \setminus \{0\} : V(p) \wedge R(p)=0\}.
\]

\subsection{Distributions on $\mathbb{P}^n$ with split tangent sheaf}

Given a distribution $\D$ of dimension $k$ on $\Pj^n$, $n \geq 3$, we say that the tangent sheaf of $\D$ splits if
\[
\TD=\bigoplus_{i=1}^k\mathcal{O}_{\Pj^n}(e_i),
\]
for some integers $e_i$. This is equivalent to $\D$ being induced by the homogeneous ($n-k$)-form on $\C^{n+1}$
\begin{equation}\label{E:split}
\Omega=i_{X_1} \cdots i_{X_k}i_R (dx_0 \wedge \cdots \wedge dx_n),
\end{equation}
where for $i=1,\ldots, k$, $X_i$ is a homogeneous polynomial vector field on $\C^{n+1}$ of degree $d_i=1-e_i$. Note that $X_i$ defines a one-dimensional foliation $\G_i$ on $\Pj^n$ tangent to $\D$ with $\deg(\G_i)=d_i$, $i=1,\ldots,k$. In particular,
\begin{equation}\label{E:deg_aditive}
\deg(\D)=\deg(\G_1)+\cdots+\deg(\G_k).
\end{equation}
The reader can find more details in \cite{CukiermanPereira}, where it deals with foliations, but the same procedures apply to distributions.

\section{Distributions with locally free tangent sheaf}

In this section, we present some results related to distributions with 
locally free tangent sheaf. We believe that Proposition~\ref{P:known_fact_2} 
is known, but we are not aware of any reference for it, at least in the form it is stated. We will use the following result a few times in the sequel.

\begin{lemma}\label{R:existence_functions}
Let $M$ be a complex manifold of dimension $n \geq 3$, $X_1,\ldots,X_k \in \mathcal{X}(M)$, and let $\Theta \in \Omega^n(M)$. Set $\omega=i_{X_1} \cdots i_{X_k}\Theta$, and assume that $\codim(\sing(\omega)) \geq 2$. If $X \in \mathcal{X}(M)$ is such that $i_X \omega=0$, then there are unique holomorphic functions $f_1,\ldots,f_k$ on $M$ such that $X=f_1 X_1+ \cdots+f_k X_k$.
\end{lemma}
\begin{proof}
First, observe that the condition $\codim(\sing(\omega)) \geq 2$ guarantees that the holomorphic top-degree form $\Theta$ is nowhere vanishing on $M$.

Let $p \in M \setminus \sing(\omega)$. By the definition of $\omega$, it follows that $X_1(p), \ldots, X_k(p)$ are linearly independent in $T_p M$. Note that $\{X_1,\ldots,X_k\}$ can be extended to a local frame of $TM$ over a sufficiently small neighborhood $U$ of $p$. 

Indeed, let $\{v_{k+1},\ldots,v_n\} \subset T_p M$ be such that $\{X_1(p), \ldots, X_k(p), v_{k+1}, \ldots, v_n\}$ is a basis of $T_p M$. Choose an open neighborhood $U \subset M$ of $p$ and $n-k$ vector fields $X_{k+1},\ldots,X_n$ defined on $U$ such that $X_{k+1}(p)=v_{k+1},\ldots,X_n(p)=v_n$ (for instance, one may take $U=M$ and choose $X_{k+1},\ldots,X_n$ to be constant vector fields equal to $v_{k+1},\ldots,v_n$, respectively). By shrinking $U$ if necessary, it follows that $X_1(q),\ldots,X_n(q)$ are linearly independent in $T_q M$ for every $q \in U$, i.e., $\{X_1,\ldots,X_n\}$ is a frame of $TM$ over $U$.

As a consequence, we can write 
$$
X = f_1 X_1 + \cdots + f_n X_n,
$$
where $f_1,\ldots,f_n$ are unique holomorphic functions on $U$. The relation $i_X \omega = 0$ implies that $X(q)$ lies in the span of $X_1(q),\ldots,X_k(q)$ for every $q \in U$, hence $f_{k+1} = \cdots = f_n = 0$.

By varying $p$, we obtain an open covering $\{U_\alpha\}_{\alpha \in \Lambda}$ of $M \setminus \sing(\omega)$, and for each $\alpha \in \Lambda$ unique holomorphic functions $f^\alpha_1,\ldots,f^\alpha_k$ defined on $U_\alpha$ such that 
$$
X = f^\alpha_1 X_1 + \cdots + f^\alpha_k X_k
$$
on $U_\alpha$. Moreover, by uniqueness, if $U_\beta \cap U_\gamma \neq \emptyset$, then $f^\beta_j = f^\gamma_j$ on $U_\beta \cap U_\gamma$ for all $j=1,\ldots,k$.

Thus, for each $j=1,\ldots,k$, we can define a function $f_j$ on $M \setminus \sing(\omega)$ by setting $f_j(p)=f^\alpha_j(p)$ whenever $p \in U_\alpha$. Clearly, the functions $f_1,\ldots,f_k$ are holomorphic and satisfy 
$$
X = f_1 X_1 + \cdots + f_k X_k
$$
on $M \setminus \sing(\omega)$. Since $\codim(\sing(\omega)) \geq 2$, it follows from Hartogs' theorem that $f_1,\ldots,f_k$ extend holomorphically to $M$, which completes the proof.
\end{proof}

If $X \in \mathcal{X}(M)$ is a vector field defined on a complex manifold $M$, denote by $T_X$ the subsheaf of $T_M$ generated by $X$, i.e. for each open set $U \subset M$,
\[
T_X(U)=\{f \cdot X_{|U}: f \in \mathcal{O}_M(U)\}.
\]
Given a distribution $\D$ on $M$, if $X$ is a vector field tangent to $\D$, it is clear that $T_X$ is also a subsheaf of $\TD$.

\begin{remark}\label{R:vec_fol_if_codim_2}
For $X \neq 0 \in \mathcal{X}(M)$, denote by $\G$ the one-dimensional foliation on $M$ defined by $X$, see Remark~\ref{R:vec_field_def_fol}. In general, we have that $T_X$ is a subsheaf of $\TG$. A simple verification shows that $T_X = \TG$ if and only if $\codim(\sing(X)) \geq 2$. 
\end{remark}

\begin{lemma}\label{L:lemma_vec_implies_generates}
Let $\D$ be a $k$-dimensional distribution on a complex manifold $M$, $X_1,\ldots,X_k \in \mathcal{X}(M)$ and $p \in M$ be such that
\begin{equation}\label{E:lemma_vec_implies_generates}
(\TD)_p=(T_{X_1})_p\oplus \cdots \oplus (T_{X_k})_p.
\end{equation}
Then $\codim(\sing(X_i),p) \geq 2$, for $i=1,\ldots,k$. In particular, if \eqref{E:lemma_vec_implies_generates} holds for every $p \in M$, and $\G_1,\ldots,\G_k$ are the one-dimensional foliations on $M$ defined by $X_1,\ldots,X_k$, respectively, we have $\TD=T_{{\G_1}} \oplus \cdots \oplus T_{{\G_k}}$.
\end{lemma}
\begin{proof}
Assume, on the contrary, that there are $p \in M$ satisfying \eqref{E:lemma_vec_implies_generates} and $i \in \{1,\ldots,k\}$ such that $\codim(\sing(X_i),p) \leq 1$. Clearly, it follows from \eqref{E:lemma_vec_implies_generates} that $\codim(\sing(X_i),p)=1$. By rearranging the vector fields if necessary, we can assume that $i=1$. As an element of $\mathcal{X}(M,p)$, we can write $X_1=fY$, where $Y \in \mathcal{X}(M,p)$ and $f \in \mathcal{O}_{M,p}$ satisfies $f(p)=0$.

We claim that $Y \not\in (\TD)_p$. In fact, if $Y \in (\TD)_p$, by considering $X_1,\ldots,X_k$ as elements of $\mathcal{X}(M,p)$, from \eqref{E:lemma_vec_implies_generates} there would exist $f_1,\ldots, f_k \in \mathcal{O}_{M,p}$ such that
\[
Y=f_1X_1+\cdots+f_kX_k.
\]
Since $X_1=fY$ we have that
\[
(f_1 f-1)Y+f_2 X_2+\cdots +f_k X_k=0.
\]
Multiplying both sides of the last equation by $f$, we get
\[
(f_1 f-1)X_1+f f_2 X_2+ \cdots+f f_k X_k=0.
\]
Then from \eqref{E:lemma_vec_implies_generates} we conclude that 
$f_1 f - 1 \equiv 0$, which is a contradiction since $f(p)=0$. 

Thus we have $Y \not\in (\mathcal{T}_{\mathcal{D}})_p$ while $X_1=fY \in (\mathcal{T}_{\mathcal{D}})_p$, and once again we obtain a contradiction, since $(\mathcal{T}_M/\mathcal{T}_{\mathcal{D}})_p 
\cong (\mathcal{T}_M)_p / (\mathcal{T}_{\mathcal{D}})_p$ is a torsion-free $\mathcal{O}_{M,p}$-module (see Remark~\ref{R:tang_sheaf_properties}). The particular case follows from Remark~\ref{R:vec_fol_if_codim_2}.
\end{proof}

\begin{remark}\label{R:form_in_polydisc}
If $M = U \subset \C^n$ is a Stein open set, in particular if it is a 
polydisc, then, since $H^1(U,\mathcal{O}_U^*)=0$, a distribution 
$\mathcal{D}$ on $U$ can be defined by a single $(n-k)$-form $\omega$ on $U$ satisfying $\codim(\sing(\omega)) \geq 2$. When we deal with germs of distributions defined at $0 \in \C^n$, such germs can be represented by an $(n-k)$-form $\omega$, with $\codim(\sing(\omega)) \geq 2$, defined on a sufficiently small 
polydisc $U$ containing $0$. For a similar reason, a germ of a one-dimensional 
foliation can be represented by a single vector field $X$ defined on a sufficiently small 
polydisc containing $0$, satisfying $\codim(\sing(X)) \geq 2$.
\end{remark}

\begin{prop}\label{P:known_fact}
Let $\D$ be a $k$-dimensional distribution on $U$, where $U$ is $(\C^n,0)$ or a polydisc of $\C^n$, defined by the ($n-k$)-form $\omega$ on $U$, see Remark~\ref{R:form_in_polydisc}. Assume that $\G_1,\ldots,\G_k$ are one-dimensional foliations on $U$, defined by $X_1,\ldots,X_k \in \mathcal{X}(U)$, respectively, with $\codim(\sing(X_i)) \geq 2,i=1,\ldots,k$. Then the following are equivalent:
\begin{enumerate}[(a)]
\item\label{a:P:known_fact} $\TD=T_{{\G_1}} \oplus \cdots \oplus T_{{\G_k}}$.
\item\label{b:P:known_fact} $\omega=i_{X_1} \cdots i_{X_k}\Theta$, for some $n$-form $\Theta$ on $U$.
\end{enumerate}
\end{prop}
\begin{proof}
We will first address the case where $U$ is a polydisc in $\C^n$. 
Assume first that \eqref{b:P:known_fact} holds. Given $p \in U$, let $Z$ be 
a germ of a vector field around $p$ tangent to $\mathcal{D}$, defined on a 
sufficiently small open set $V \subset U$. Since $\codim(\sing(\omega)) \geq 2$ 
and, in particular,
\[
\omega = i_{X_1} \cdots i_{X_k} \Theta
\]
on $V$, the condition $i_Z \omega = 0$ implies, by Lemma~\ref{R:existence_functions}, 
that there exist unique holomorphic functions $f_1,\ldots,f_k$ defined on $V$ 
such that
\[
Z = f_1 X_1 + \cdots + f_k X_k.
\]
We conclude that
\[
(\mathcal{T}_{\mathcal{D}})_p 
= (T_{X_1})_p \oplus \cdots \oplus (T_{X_k})_p.
\]
As $p \in U$ is arbitrary, we obtain that
\[
\mathcal{T}_{\mathcal{D}} = T_{X_1} \oplus \cdots \oplus T_{X_k}
\]
and hence \eqref{a:P:known_fact} holds from the particular case of 
Lemma~\ref{L:lemma_vec_implies_generates}.

Now, assume that \eqref{a:P:known_fact} holds. From Lemma~\ref{L:lemma_vec_implies_generates}, it follows that for every $p \in U$ the 
germs of $X_1,\ldots,X_k$ at $p$ form a basis for $(\mathcal{T}_{\mathcal{D}})_p$. 
Given an $n$-form $\Lambda$ on $U$ with $\codim(\sing(\Lambda)) \geq 2$, we have 
that 
\begin{equation}\label{E:Prop_inicial}
i_{X_1} \cdots i_{X_k}\Lambda = h\omega,
\end{equation}
for some holomorphic function $h$ on $U$. We claim that $h$ has no zeros in $U$, 
and we show that \eqref{b:P:known_fact} holds by setting $\Theta = \Lambda/h$. 

In fact, if $h$ has some zero in $U$, since $\codim(\sing(\omega)) \geq 2$ we can 
take $q \in U$ such that $h(q)=0$ and $\omega(q) \neq 0$. It follows from \eqref{E:Prop_inicial} that
\[
X_1(q) \wedge \cdots \wedge X_k(q) = 0,
\]
which contradicts $\omega(q) \neq 0$, since the germs of $X_1,\ldots,X_k$ at $q$ 
form a basis for $(\mathcal{T}_{\mathcal{D}})_q$.

Finally, consider that $U=(\C^n,0)$. We may assume that $\G_1,\ldots,\G_k,X_1,\ldots,X_k,\omega ,\Theta$ are all defined on an a polydisc $V$ centered at $0$, and that $\D$ is represented by a $k$-dimensional distribution $\D_V$ on $V$. 

If \eqref{b:P:known_fact} holds and $X \in \mathcal{X}(\C^n,0)$ satisfies $i_X \omega=0$, then, after 
possibly shrinking $V$, we may assume that $X$ is also defined on $V$. In this situation, an argument 
analogous to the case where $U$ is itself a polydisc shows that \eqref{a:P:known_fact} also holds. 

Conversely, suppose that \eqref{a:P:known_fact} holds. Since 
$\TD = (T_{\mathcal{D}_V})_0$ is a free $\mathcal{O}_{\C^n,0}$-module with basis 
$\{X_1,\ldots,X_k\} \subset \mathcal{X}(\C^n,0)$, and $T_{\mathcal{D}_V}$ is a coherent sheaf, we may 
(after shrinking $V$ if necessary) assume that for every $p \in V$, the vector fields 
$X_1,\ldots,X_k$, viewed as elements of $\mathcal{X}(V,p)$, form a basis of $(T_{\mathcal{D}_V})_p$. 
In other words,
\[
T_{\mathcal{D}_V} \;=\; T_{X_1} \oplus \cdots \oplus T_{X_k} 
\;=\; T_{\G_1} \oplus \cdots \oplus T_{\G_k}.
\]
Using again the same argument as in the case where $U$ is a polydisc, we deduce that 
\eqref{b:P:known_fact} holds, which completes the proof.
\end{proof}

\begin{prop}\label{P:known_fact_2}
Let $\D$ be a $k$-dimensional distribution on a complex manifold $M$. Then the following are equivalent:
\begin{enumerate}[(a)]
\item\label{a:P:known_fact_2} $\TD$ is locally free of rank $k$.
\item\label{b:P:known_fact_2} For every $p \in M$, there exist an open set $U$ containing $p$ and $X_1,\ldots,X_k \in \mathcal{X}(U)$ such that $\TD_{|U}=T_{X_1} \oplus \cdots \oplus T_{X_k}$.
\item\label{c:P:known_fact_2} For every $p \in M$, there exist an open set $U$ containing $p$ and one-dimensional foliations $\G_1,\ldots,\G_k$ on $U$ such that $\TD_{|U}=T_{{\G_1}} \oplus \cdots \oplus T_{{\G_k}}$.
\item\label{d:P:known_fact_2} For every $p \in M$, there exist $X_1,\ldots,X_k \in \mathcal{X}(M,p)$ and $\Theta \in \Omega^n(M,p)$ such that $\omega=i_{X_1} \cdots i_{X_k}\Theta$ satisfies $\codim(\sing(\omega)) \geq 2$ and it defines the germ of $\D$ at $p$.
\item\label{e:P:known_fact_2} For every $p \in M$, the $\mathcal{O}_{M,p}$-module $(\TD)_p$ is free with $k$ generators.
\end{enumerate}
\end{prop}
\begin{proof}
We start by showing that \eqref{a:P:known_fact_2} implies 
\eqref{b:P:known_fact_2}. If \eqref{a:P:known_fact_2} holds, by definition, for 
each $p \in M$ there exist an open set $U$ containing $p$ and an isomorphism 
of sheaves
\[
\varphi: 
\underbrace{{\mathcal{O}_{M}}_{|U} \oplus \cdots \oplus {\mathcal{O}_{M}}_{|U}}_{k \text{ copies}} 
\longrightarrow \mathcal{T}_{\mathcal{D}}{}_{|U}.
\]
Thus, we can obtain the vector fields $X_1,\ldots,X_k$ defined on $U$ by 
considering the images of the sections 
$(1,0,\ldots,0), \ldots, (0,\ldots,0,1)$ under $\varphi$. 

The proof that \eqref{b:P:known_fact_2} implies \eqref{c:P:known_fact_2} follows 
from the particular case of Lemma~\ref{L:lemma_vec_implies_generates}. If 
\eqref{c:P:known_fact_2} holds, then \eqref{d:P:known_fact_2} also holds as an 
immediate consequence of Proposition~\ref{P:known_fact}. 

If \eqref{d:P:known_fact_2} holds, note that in particular 
$\codim(\sing(X_i)) \geq 2$ for $i=1,\ldots,k$. Let 
$\mathcal{G}_1,\ldots,\mathcal{G}_k$ be the germs of the one-dimensional 
foliations defined by $X_1,\ldots,X_k$, respectively. By Proposition~\ref{P:known_fact}, we conclude that
\[
(\mathcal{T}_{\mathcal{D}})_p = 
(T_{\mathcal{G}_1})_p \oplus \cdots \oplus (T_{\mathcal{G}_k})_p,
\]
and then by Remark~\ref{R:vec_fol_if_codim_2} it follows that 
$(\mathcal{T}_{\mathcal{D}})_p$ is free with generators $X_1,\ldots,X_k$, which 
implies \eqref{e:P:known_fact_2}. 

Finally, if \eqref{e:P:known_fact_2} holds, then \eqref{a:P:known_fact_2} also 
holds, since $\mathcal{T}_{\mathcal{D}}$ is coherent.
\end{proof}

The following result provides a characterization of LDS forms (cf. \cite[Proposition~1.2.1]{MR1842027}).

\begin{prop}\label{P:Converse}
Let $\omega \in \Omega^{n-k}(M)$ be defined on a complex manifold $M$, with $n \geq 3$ and $1 \leq k < n$. Then $\omega$ is LDS if and only if for every $p \in M \setminus \sing(\omega)$ there exist an open neighborhood $U \subset M$ of $p$, vector fields $X_1,\ldots,X_{k} \in \mathcal{X}(U)$, and $\Theta \in \Omega^n(U)$ such that
\begin{equation}\label{E:prop_loc_loc_decom}
\omega=i_{X_1} \cdots i_{X_{k}} \Theta
\end{equation}
holds on $U$.
\end{prop}
\begin{proof}
Assume that $\omega$ is LDS, and let $\D$ be the regular distribution on $M \setminus \sing(\omega)$ defined by $\omega$. By \cite[Section 2]{GiraldoCollantes}, the tangent sheaf $\TD$ is locally free. We conclude from Proposition~\ref{P:known_fact_2} that $\omega$ can be written as in \eqref{E:prop_loc_loc_decom}.

Next, given $p \in M \setminus \sing(\omega)$, assume that an equality as in \eqref{E:prop_loc_loc_decom} holds on an open set $U \subset M \setminus \sing(\omega)$ containing $p$. Then the map 
\[
x \in U \longmapsto \ker(\omega(x))
\]
clearly defines a distribution of $k$-planes on $U$. Hence, the equivalence between items (i) and (v) of \cite[Proposition~1.2.1]{MR1842027} ensures that $\omega$ is LDS. However, since \cite{MR1842027} does not provide a proof of this proposition, we include a direct argument here.

Proceeding as in the proof of Lemma~\ref{R:existence_functions}, and shrinking $U$ if necessary, there exist $n-k$ holomorphic vector fields $X_{k+1}, \ldots, X_n$ on $U$ such that $\{X_1(q), \ldots, X_n(q)\}$ is a basis of $T_q M$ for every $q \in U$. Then there are (unique) holomorphic $1$-forms $\omega_1, \ldots,\omega_n$ defined on $U$ such that, $\forall q \in U$,
\begin{equation}\label{E:Converse}
i_{X_j}\omega_l(q)=
\begin{cases}
1, & \text{if } j=l,\\
0, & \text{if } j \neq l,
\end{cases}
\qquad j,l=1,\ldots,n.
\end{equation}
Since $\Theta_{|U}=f \omega_1 \wedge \cdots \wedge \omega_n$, for some nonzero holomorphic function $f$ on $U$, we obtain
\[
\omega_{|U}=\pm f\omega_{k+1} \wedge \cdots \wedge \omega_n,
\]
which shows that $\omega$ is LDS.
\end{proof}

In what follows, we present an alternative argument for the first part of the proof of Proposition~\ref{P:Converse}, following the same approach as in the second part. Assume that
\[
\omega_{|U}=\omega_1 \wedge \cdots \wedge \omega_{n-k},
\]
where $U \subset M$ is a neighborhood of a point $p \in M \setminus \sing(\omega)$, and $\omega_1,\ldots,\omega_{n-k}$ are holomorphic $1$-forms on $U$.

By shrinking $U$ if necessary, we may assume that there exist $k$ holomorphic $1$-forms $\omega_{n-k+1},\ldots,\omega_n$ defined on $U$ such that 
\[
\omega_1 \wedge \cdots \wedge \omega_n (q) \neq 0 \quad \text{for every } q \in U.
\]
There are then unique holomorphic vector fields $X_1,\ldots,X_n$ on $U$ satisfying \eqref{E:Converse}. Consequently,
\[
i_{X_{n-k+1}} \cdots i_{X_n} \Theta = \pm \omega_{|U},
\]
where $ \Theta := \omega_1 \wedge \cdots \wedge \omega_n \in \Omega^n(U) $.

An immediate consequence of Propositions~\ref{P:known_fact_2} and \ref{P:Converse} is the following:

\begin{cor}\label{C:Converse}
Let $M$ be a complex manifold of dimension $n \geq 3$. Assume that $X_1,\ldots,X_k \in \mathcal{X}(M) $ and $\Theta \in\Omega^n(M)$. Then
\[
\omega=i_{X_1} \cdots i_{X_k} \Theta
\]
is LDS. Consequently, $\omega$ defines a regular $k$-dimensional distribution on $M \setminus \sing(\omega)$. In particular, if $\codim(\sing(\omega)) \ge 2$, then $\omega$ defines a singular $k$-dimensional distribution on $M$ whose tangent sheaf is locally free.
\end{cor}

In the next two examples, we illustrate some well-known facts using the results developed above.

\begin{example}
An $(n-1)$-form $\omega$ on a complex manifold of dimension $n$ is LDS. Indeed, by Remark~\ref{R:iso_X_O}, for every $p \in M$, there exists an open neighborhood $U \subset M$ of $p$, a holomorphic vector field $X$, and a nonvanishing $n$-form $\Theta$, all defined on $U$, such that
\[
\omega = i_X \Theta
\]
holds on $U$.

Corollary~\ref{C:Converse} then shows that $\omega$ is LDS. In particular, if one chooses to describe a one-dimensional distribution by a collection of ($n-1$)-forms $\{\omega_j\}_{j \in J}$, then, in the definition of a singular holomorphic distribution, the condition that each $\omega_j$ be LDS is automatically satisfied.
\end{example}

\begin{example}\label{Ex:dist_1_inte}
We now verify that a one-dimensional distribution $\mathcal{D}$ on a complex manifold $M$ is always integrable, and that its tangent sheaf $\mathcal{T}_{\mathcal{D}}$ is locally free.

To this end, we use the original definition of a distribution given by a family $\{(U_j,\omega_j)\}_{j \in J}$, where $\mathcal{U}=\{U_j\}_{j \in J}$ is an open covering of $M$ and $\omega_j \in \Omega^{n-1}(U_j)$ satisfies $\codim(\sing(\omega_j)) \geq 2$. By the previous example, we know that each $\omega_j$ is LDS. Furthermore, in the definition of integrability, we have ($n+1$)-forms $d\eta_i \wedge \omega_j$ on the left side of \eqref{E:integrability}, so it is obviously true.

Finally, assume that
\[
\omega_j = i_{v_j}\Theta_j
\]
as in \eqref{E:one_form_vector}, where $v_j \in \mathcal{X}(U_j)$ and $\Theta_j \in \Omega^n(U_j)$. This ensures that condition \eqref{d:P:known_fact_2} in Proposition~\ref{P:known_fact_2} is satisfied, and therefore $\mathcal{T}_{\mathcal{D}}$ is locally free. This also follows from the well-known fact that every rank one reflexive sheaf on a complex manifold is locally free (see Remark~\ref{R:tang_sheaf_properties}).
\end{example}

\section{Some division properties}

Classically, given germs of holomorphic forms $\omega \in \Omega^1(\mathbb{C}^n,0)$ and $\alpha \in \Omega^k(\mathbb{C}^n,0)$, we say that $\alpha$ is divisible by $\omega$ if there exists $\beta \in \Omega^{k-1}(\mathbb{C}^n,0)$ such that $\alpha=\omega \wedge \beta$. If $\alpha$ is divisible by $\omega$, then $\omega \wedge \alpha=0$. One might ask when this condition is also sufficient, which leads to the following:

\textbf{division property by $k$-forms.} We say that $\omega \in \Omega^1(\mathbb{C}^n,0)$ satisfies the division property by $k$-forms, if for any $\alpha \in \Omega^k(\mathbb{C}^n,0)$ such that $\omega \wedge \alpha=0$ then there exists $\beta \in \Omega^{k-1}(\mathbb{C}^n,0)$ such that $\alpha=\omega \wedge \beta$ (cf. \cite{Moussu76} and \cite{MR2473858}).

The following result is known as the De Rham-Saito Division Lemma for $1$-forms. A more general version can be found in \cite{Saito76} and \cite{Malgrange1977}.

\begin{lemma}\label{L:De-S}
Let $\omega \in \Omega^1(\mathbb{C}^n,0)$ be a germ of a $1$-form such that $\codim( \sing (\omega)) \geq p+1$, where $p$ is a nonnegative integer. Then $\omega$ satisfies the division property by $k$-forms for $k \leq p$.
\end{lemma}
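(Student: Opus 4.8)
The plan is to recast the division property as the exactness of a Koszul-type complex and then invoke the depth-sensitivity of the Koszul complex. Write $R = \mathcal{O}_{\mathbb{C}^n,0}$ and $\omega = \sum_{i=1}^n a_i\, dx_i$ with $a_i \in R$, so that $\sing(\omega) = V(I)$ for the ideal $I = (a_1,\dots,a_n)$. Since $R$ is a regular, hence Cohen--Macaulay, local ring, one has $\codim(\sing(\omega)) = \operatorname{ht}(I) = \operatorname{depth}(I,R)$, and the hypothesis becomes $\operatorname{depth}(I,R) \geq p+1$. The operation $\alpha \mapsto \omega \wedge \alpha$ assembles the graded pieces $\Omega^k(\mathbb{C}^n,0)$ into a complex
\begin{equation*}
0 \to \Omega^0 \xrightarrow{\omega\wedge} \Omega^1 \xrightarrow{\omega\wedge} \cdots \xrightarrow{\omega\wedge} \Omega^n \to 0,
\end{equation*}
and the division property by $k$-forms is precisely the statement that this complex is exact at the spot $\Omega^k$: a $k$-form $\alpha$ with $\omega \wedge \alpha = 0$ lies in the image of $\omega\wedge$ from $\Omega^{k-1}$.

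Identifying each $\Omega^k$ with $\Lambda^k R^n$ through the basis $dx_{i_1}\wedge\cdots\wedge dx_{i_k}$, this complex is exactly the Koszul cochain complex attached to the element $(a_1,\dots,a_n) \in R^n$, with the wedge map playing the role of the Koszul codifferential. I would then use the self-duality of the Koszul complex: contraction with the top form $dx_1 \wedge \cdots \wedge dx_n$ provides an isomorphism between the wedge complex and the homological Koszul complex $K_\bullet(a_1,\dots,a_n;R)$ in complementary degrees, so that
\begin{equation*}
H^k(\Omega^\bullet, \omega\wedge) \cong H_{n-k}\bigl(K_\bullet(a_1,\dots,a_n;R)\bigr).
\end{equation*}

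The essential input is the classical depth-sensitivity of the Koszul complex: $H_j(K_\bullet(a_1,\dots,a_n;R)) = 0$ for all $j > n - \operatorname{depth}(I,R)$. Setting $j = n-k$, the right-hand homology vanishes once $k < \operatorname{depth}(I,R)$, which under $\operatorname{depth}(I,R) \geq p+1$ holds for every $k \leq p$. Hence $H^k(\Omega^\bullet, \omega\wedge) = 0$ for $k \leq p$, the complex is exact at $\Omega^k$ in that range, and unwinding the identification yields, for any $\alpha \in \Omega^k$ with $\omega \wedge \alpha = 0$, a form $\beta \in \Omega^{k-1}$ with $\alpha = \omega \wedge \beta$. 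The main obstacle is the bookkeeping needed to make the two reductions precise: first, translating the geometric codimension of $\sing(\omega)$ into the algebraic grade of $I$, which relies on the Cohen--Macaulay property of $R$; and second, verifying carefully that contraction with the volume form intertwines the wedge differential with the Koszul differential up to sign, so that the degree-reversing isomorphism of complexes is genuine. Once these identifications are in place, the depth bound is a standard theorem in commutative algebra and closes the argument.
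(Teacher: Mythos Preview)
Your argument is correct. Note, however, that the paper does not actually prove this lemma: it is stated as the De Rham--Saito Division Lemma and attributed to the literature (\cite{Saito76}, \cite{Malgrange1977}), so there is no in-paper proof to compare against. Your Koszul-complex approach is one of the standard routes to this result and fills in what the paper merely cites.

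One small simplification: the self-duality detour is not needed. The wedge complex $(\Omega^\bullet,\omega\wedge)$ \emph{is} the Koszul cochain complex $K^\bullet(a_1,\dots,a_n;R)$, and the depth-sensitivity theorem in its cohomological form already says $H^k(K^\bullet)=0$ for all $k<\operatorname{depth}(I,R)$. With $\operatorname{depth}(I,R)\geq p+1$ this gives exactness at $\Omega^k$ for every $k\leq p$ directly, without translating to $H_{n-k}$ of the chain complex. Your identification of $\codim(\sing(\omega))$ with $\operatorname{depth}(I,R)$ via the Cohen--Macaulay property of $\mathcal{O}_{\mathbb{C}^n,0}$ is the only genuinely non-formal step, and it is handled correctly.
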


In a similar way, we say that 
$X \in \mathcal{X}(\mathbb{C}^n,0)$ satisfies the \emph{division property by $k$-forms} 
if, for any $\alpha \in \Omega^k(\mathbb{C}^n,0)$ such that $i_X \alpha = 0$ 
there exists $\beta \in \Omega^{k+1}(\mathbb{C}^n,0)$ such that 
$\alpha = i_X \beta$. 

Lemma~\ref{L:De-S} has a dual version in terms of vector fields. In fact, 
writing $X = \sum_{i=1}^n a_i \partial / \partial z_i$, set 
\[
\omega = \sum_{i=1}^n a_i  dz_i \in \Omega^1(\mathbb{C}^n,0).
\]
For $1 \leq k \leq n-1$, by making use of the Hodge star operator 
\[
* : \Omega^k(\mathbb{C}^n,0) \longrightarrow \Omega^{n-k}(\mathbb{C}^n,0),
\]
it can be proved that $\omega$ satisfies the division property by $k$-forms if and 
only if $X$ satisfies the division property by $(n-k)$-forms, see 
\cite{CamachoLins82} and \cite{MR2370063}. 

Then we have the following:

\begin{cor}\label{C:dual_division}
Let $X \in \mathcal{X}(\mathbb{C}^n,0)$ be a germ of a vector field such that $\codim (\sing (X)) \geq p+1$, where $p$ is a positive integer. Then $X$ satisfies the division property by $k$-forms for $n-p \leq k \leq n-1$. In particular, if $\codim ( \sing (X)) \geq 3$ then $X$ satisfies the division property by $k$-forms for $k=n-2$ and $k=n-1$.
\end{cor}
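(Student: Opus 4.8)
The plan is to reduce the statement to the De Rham--Saito Division Lemma (Lemma \ref{L:De-S}) applied to the $1$-form dual to $X$, using the Hodge star operator to interchange the contraction $i_X$ with exterior multiplication. First I would equip $\mathbb{C}^n$ with the standard complex-bilinear form declaring $dx_1,\ldots,dx_n$ orthonormal, and let $\star\colon\Omega^j\to\Omega^{n-j}$ denote the resulting (holomorphic) Hodge star; working with a bilinear rather than a Hermitian pairing keeps everything in the holomorphic category. To $X=\sum_i X_i\,\partial/\partial x_i$ I associate the $1$-form $\omega_X=\sum_i X_i\,dx_i$, and I note that under this identification $\sing(\omega_X)=\sing(X)$, so that $\codim(\sing(\omega_X))\geq p+1$.

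The key algebraic input is the pair of identities relating contraction and wedge through $\star$, namely $\star\,i_X\alpha=\pm\,\omega_X\wedge\star\alpha$ and $\star(\omega_X\wedge\gamma)=\pm\,i_X\star\gamma$, valid up to a sign depending only on the degrees involved. Granting these, I would argue as follows. Let $\alpha\in\Omega^k(\mathbb{C}^n,0)$ satisfy $i_X\alpha=0$. Applying $\star$ and using the first identity gives $\omega_X\wedge\star\alpha=0$, where $\star\alpha\in\Omega^{n-k}(\mathbb{C}^n,0)$. Since $n-k\leq p$ is equivalent to $k\geq n-p$, the hypothesis $n-p\leq k\leq n-1$ lets me invoke Lemma \ref{L:De-S}: the $1$-form $\omega_X$ divides the $(n-k)$-form $\star\alpha$, so there is $\gamma\in\Omega^{n-k-1}(\mathbb{C}^n,0)$ with $\star\alpha=\omega_X\wedge\gamma$.

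To finish, I would apply $\star$ once more and use $\star\star=\pm\,\mathrm{id}$ together with the second identity: $\alpha=\pm\star\star\alpha=\pm\star(\omega_X\wedge\gamma)=\pm\,i_X\star\gamma$. Setting $\beta=\pm\,\star\gamma\in\Omega^{k+1}(\mathbb{C}^n,0)$ then yields $\alpha=i_X\beta$, which is precisely the asserted division property; the bound $k\leq n-1$ guarantees that $\beta$ has admissible degree $k+1\leq n$, while $n-k\geq 1$ makes the division by the $1$-form $\omega_X$ meaningful. The special case $\codim(\sing(X))\geq 3$ is just $p=2$, which forces $k=n-2$ or $k=n-1$.

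The main obstacle, and the only delicate point, will be pinning down the precise commutation relations between $i_X$, $\omega_X\wedge(\cdot)$ and $\star$ with their correct signs, and verifying that the complex-bilinear Hodge star behaves as in the real Riemannian case, so that $\star\star=\pm\,\mathrm{id}$ and the two displayed identities hold verbatim. Once these conventions are fixed, the degree bookkeeping and the appeal to Lemma \ref{L:De-S} are routine.
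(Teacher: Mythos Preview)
Your proposal is correct and follows precisely the approach the paper indicates: the paper does not spell out a proof but simply says ``making use of the Hodge star operator'' and cites \cite{CamachoLins82} and \cite{MR2370063}, which is exactly the duality argument you outline reducing to Lemma~\ref{L:De-S}. Your write-up supplies the details the paper omits, and the degree bookkeeping ($n-k\le p$ for De Rham--Saito, $k\le n-1$ so that $\beta\in\Omega^{k+1}$ exists) is handled correctly; the sign conventions for the complex-bilinear Hodge star are indeed routine once fixed.
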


The next definition is simply a complement to the property of vector field division by $k$-forms.

\begin{dfn}\label{D:divides}
Let $\omega \in \Omega^k(\mathbb{C}^n,0) $ and $X \in \mathcal{X}(\mathbb{C}^n,0)$ be the germs of a $k$-form and a vector field, respectively. We say that $X$ divides $\omega$ if there is $\Theta \in \Omega^{k+1}(\mathbb{C}^n,0)$ such that $\omega=i_X  \Theta$.
\end{dfn}

In what follows, we extend the notion of division of differential forms by a vector field to the case of a distribution and a one-dimensional foliation tangent to it.

\begin{dfn}\label{D:def_division}
Let $\D$ be a $k$-dimensional distribution on a complex manifold $M$ of dimension $n \geq 3$, and $\G$ a one-dimensional foliation on $M$ tangent to $\D$. We say that $\G$ locally divides $\D$ at $p \in M$ if, for any $X \in \mathcal{X}(M,p)$ and any $\omega \in \Omega^{n-k}(M,p)$, with $\codim(\sing(X)) \geq 2$ and $\codim(\sing(\omega)) \geq 2$, defining the germs of $\G$ and $\D$ at $p$, respectively, we have
\[
\omega=i_X \eta,
\]
for some $\eta \in \Omega^{n-k+1}(M,p)$.
If $\G$ locally divides $\D$ at every $p \in M$, we say that $\G$ locally divides $\D$.
\end{dfn}

Since the elements of $\mathcal{X}(M,p)$ and $\Omega^{n-k}(M,p)$ that define the germs of $\mathcal{G}$ and $\mathcal{D}$ at $p$ and have no zeros of codimension one are determined up to multiplication by units in $\mathcal{O}_{M,p}$, it is easy to see that $\G$ locally divides $\D$ at $p$ if and only if there exist $X \in \mathcal{X}(M,p)$ and $\omega \in \Omega^{n-k}(M,p)$, with $\codim(\sing(X)) \geq 2$ and $\codim(\sing(\omega)) \geq 2$, defining the germs of $\G$ and $\D$ at $p$, respectively, such that
\[
\omega=i_X \eta,
\]
for some $\eta \in \Omega^{n-k+1}(M,p)$.

\begin{dfn}\label{D:def_division_vec}
Let $\D$ be a $k$-dimensional distribution on a complex manifold $M$ of dimension $n \geq 3$, $v$ a holomorphic vector field on $M$ (possibly with codimension one zeros) tangent to $\D$, and $\G$ the one-dimensional foliation on $M$ defined by $v$, see Remark~\ref{R:vec_field_def_fol}. We say that $v$ locally divides $\D$ at $p \in M$ if $\codim(\sing(v),p) \geq 2$ and $\G$ locally divides $\D$ at $p$. If $v$ locally divides $\D$ at every $p \in M$ we say that $v$ locally divides $\D$.
\end{dfn}

In view of Definition~\ref{D:def_division}, it is easy to see that $v$ locally divides $\D$ at $p \in M$ if and only for any $\omega \in \Omega^{n-k}(M,p)$ defining $\D$ at $p$ we have that $\omega=i_v\eta$, for some $\eta \in \Omega^{n-k+1}(M,p)$.

The following two lemmas will be used in the proof of Proposition~\ref{P:eq_subbundle}.

\begin{lemma}\label{R:dim_2_loc_free}
Let $\D$ be a two-dimensional distribution on a complex manifold $M$ of dimension $n \geq 3$, and $\G$ a one-dimensional foliation on $M$ tangent to $\D$. Given $p \in M$, the following assertions are equivalent:
\begin{enumerate}
\item\label{1:R:dim_2_loc_free} $\G$ locally divides $\D$ at $p$.
\item\label{2:R:dim_2_loc_free} There is a one-dimensional foliation $\h$ defined on an open neighborhood of $p$ and tangent to $\D$ such that $(\TD)_p=(\TG)_p \oplus (\Th)_p$.
\item\label{3:R:dim_2_loc_free} The module $(\TD/\TG)_p$ is $\mathcal{O}_{M,p}$-free.
\end{enumerate}
In particular, if $\G$ locally divides $\D$ then $\TD$ is locally free.
\end{lemma}
\begin{proof}
Suppose that $X \in \mathcal{X}(M,p)$ and $\omega \in \Omega^{n-2}(M,p)$ are germs of a vector field and an $(n-2)$-form, respectively, with $\codim(\sing(X)) \geq 2$ and $\codim(\sing(\omega)) \geq 2$, defining the germs of $\mathcal{G}$ and $\mathcal{D}$ at $p$.

If \eqref{1:R:dim_2_loc_free} holds, by definition there exists 
$\eta \in \Omega^{n-1}(M,p)$ such that $\omega = i_X \eta$. By Remark~\ref{R:iso_X_O}, we can write $\eta = i_Y \Theta$, for some 
$Y \in \mathcal{X}(M,p)$ and $\Theta \in \Omega^{n}(M,p)$, 
hence $\omega = i_X i_Y \Theta$. 

Since $\codim(\sing(\omega)) \geq 2$, we have that $\codim(\sing(Y)) \geq 2$. It follows that $Y$ induces a one-dimensional foliation $\mathcal{H}$ 
defined on an open neighborhood of $p$ and tangent to $\mathcal{D}$. 
By Proposition~\ref{P:known_fact}, from $\omega = i_X i_Y \Theta$ 
we conclude that 
\[
(\mathcal{T}_{\mathcal{D}})_p = (\mathcal{T}_{\mathcal{G}})_p \oplus (\mathcal{T}_{\mathcal{H}})_p.
\]
Then \eqref{2:R:dim_2_loc_free} follows.

Assume that \eqref{2:R:dim_2_loc_free} holds and that the germ of $\mathcal{H}$ 
at $p$ is defined by $Y \in \mathcal{X}(M,p)$, with $\codim(\sing(Y)) \geq 2$. By Proposition~\ref{P:known_fact}, from 
$(\mathcal{T}_{\mathcal{D}})_p=(\mathcal{T}_{\mathcal{G}})_p \oplus (\mathcal{T}_{\mathcal{H}})_p$ 
it follows that $\omega=i_X i_Y \Theta$, for some 
$\Theta \in \Omega^{n}(M,p)$. 
We conclude that $\mathcal{G}$ locally divides $\mathcal{D}$ at $p$, 
then \eqref{1:R:dim_2_loc_free} follows. 
We also have that
\[
(\mathcal{T}_{\mathcal{D}}/\mathcal{T}_{\mathcal{G}})_p 
\cong (\mathcal{T}_{\mathcal{D}})_p / (\mathcal{T}_{\mathcal{G}})_p 
\cong (\mathcal{T}_{\mathcal{H}})_p,
\]
then it follows that $(\mathcal{T}_{\mathcal{D}}/\mathcal{T}_{\mathcal{G}})_p$ 
is $\mathcal{O}_{M,p}$-free, since $(\mathcal{T}_{\mathcal{H}})_p$ 
is a rank one free $\mathcal{O}_{M,p}$-module generated by $Y \in (\mathcal{T}_{\mathcal{H}})_p$, 
obtaining \eqref{3:R:dim_2_loc_free}.

Finally, if \eqref{3:R:dim_2_loc_free} holds then the exact sequence
\[
0 \to (\mathcal{T}_{\mathcal{G}})_p \to (\mathcal{T}_{\mathcal{D}})_p 
\to (\mathcal{T}_{\mathcal{D}}/\mathcal{T}_{\mathcal{G}})_p \to 0
\]
splits, since $(\mathcal{T}_{\mathcal{D}}/\mathcal{T}_{\mathcal{G}})_p$ 
is $\mathcal{O}_{M,p}$-free. As $(\mathcal{T}_{\mathcal{D}}/\mathcal{T}_{\mathcal{G}})_p$ has rank one, 
we can identify it with $(T_Y)_p \subset (\mathcal{T}_{\mathcal{D}})_p$, 
where $Y$ is a holomorphic vector field tangent to $\mathcal{D}$, 
defined on an open set $U$ containing $p$. 
By shrinking $U$ if necessary, we can assume that $X$ is also represented on $U$. 
We have that
\[
(\mathcal{T}_{\mathcal{D}})_p = (\mathcal{T}_{\mathcal{G}})_p \oplus (T_Y)_p 
= (T_X)_p \oplus (T_Y)_p,
\]
and from Lemma~\ref{L:lemma_vec_implies_generates} we obtain that 
$\codim(\sing(Y,p)) \geq 2$. 
Hence $Y$ induces a one-dimensional foliation $\mathcal{H}$ tangent to $\mathcal{D}$ 
on a sufficiently small neighborhood of $p$, in such a way that 
$(\mathcal{T}_{\mathcal{D}})_p=(\mathcal{T}_{\mathcal{G}})_p \oplus (\mathcal{T}_{\mathcal{H}})_p$. 
Then \eqref{2:R:dim_2_loc_free} follows, which finishes the proof.

In particular, if $\mathcal{G}$ locally divides $\mathcal{D}$, 
using that \eqref{2:R:dim_2_loc_free} holds for every $p \in M$, we obtain that the $\mathcal{O}_{M,p}$-module $(\TD)_p$ is free of rank $2$. By Proposition~\ref{P:known_fact_2}, we conclude that $\mathcal{T}_{\mathcal{D}}$ is locally free.
\end{proof}

\begin{cor}\label{C:Locally_divides_general}
Let $\D$ be a two-dimensional distribution on a complex manifold $M$ of dimension $n \geq 3$. Assume that there exists a one-dimensional foliation $\G$ (resp. a holomorphic vector field $v$) on $M$ such that $\codim(\sing(\G)) \geq 3$ (resp. $\codim(\sing(v)) \geq 3$). Then $\G$ (resp. $v$) locally divides $\D$. In particular, $\TD$ is locally free.
\end{cor}
\begin{proof}
The distribution $\D$ is defined by a ($n-2$)-form on an open neighborhood of every point of $M$. By the particular case of Corollary~\ref{C:dual_division}, we have that $\G$ (resp. $v$) locally divides $\D$ at every point of $M$. Then $\G$ (resp. both $v$ and the one-dimensional foliation defined by $v$) locally divides $\D$. Lemma~\ref{R:dim_2_loc_free} implies that $\TD$ is locally free.
\end{proof}

Let $M$ be a complex manifold. It is well known that the category of vector bundles over $M$ and the category of locally free sheaves on $M$ are equivalent. This equivalence is given by associating to a vector bundle $E$ the sheaf $\mathcal{E}$ of holomorphic sections of $E$.

For the next two results, we will make no distinction between a locally free sheaf on a complex manifold $M$ and the associated vector bundle.

\begin{prop}\label{P:eq_subbundle}
Let $\D$ be a two-dimensional distribution on a complex manifold $M$ of dimension $n \geq 3$, and $\G$ a one-dimensional foliation on $M$ tangent to $\D$. Then $\G$ locally divides $\D$ if and only if $\TG$ is a subbundle of $\TD$.
\end{prop}
\begin{proof}
Assume that $\mathcal{G}$ locally divides $\mathcal{D}$. 
By Lemma~\ref{R:dim_2_loc_free}, it follows that $\mathcal{T}_{\mathcal{D}}$ 
is locally free and, for every $p \in M$, we have that 
$(\mathcal{T}_{\mathcal{D}}/\mathcal{T}_{\mathcal{G}})_p$ 
is an $\mathcal{O}_{M,p}$-free module. 
Since $\mathcal{T}_{\mathcal{D}}/\mathcal{T}_{\mathcal{G}}$ 
is a coherent sheaf, we obtain that it is locally free. 
We conclude that 
$\mathcal{T}_{\mathcal{G}}$ is a subbundle of $\mathcal{T}_{\mathcal{D}}$.

On the other hand, if $\mathcal{T}_{\mathcal{G}}$ is a subbundle of 
$\mathcal{T}_{\mathcal{D}}$, then we get that $\mathcal{T}_{\mathcal{D}}/\mathcal{T}_{\mathcal{G}}$ 
is locally free. 
Using Lemma~\ref{R:dim_2_loc_free} once again, we conclude that 
$\mathcal{G}$ locally divides $\mathcal{D}$ at every $p \in M$, 
i.e., $\mathcal{G}$ locally divides $\mathcal{D}$, 
which concludes the proof.
\end{proof}

It is well known that a rank two vector bundle over $\Pj^n$, $n \geq 2$, contains no proper subbundles, unless it splits. See, for example, \cite[Lemma~2.2.4]{OkSchSpi}. The analog for two-dimensional distributions and one-dimensional subfoliations would be the following result, also inspired by \cite[Lemma~4.15]{RaRuJo2022}.

\begin{cor}\label{C:divides}
Let $\D$ be a two-dimensional distribution on $\mathbb P^n$, $n \geq 3$. Assume that there exists a one-dimensional foliation $\G$ on $\mathbb P^n$ which locally divides $\D$. Then there is a one-dimensional foliation $\h$ on $\mathbb P^n$ tangent to $\D$ such that the tangent sheaf of $\D$ splits as $\TD=\TG \oplus \Th$.
\end{cor}
\begin{proof}
It follows from Proposition~\ref{P:eq_subbundle} that 
$\mathcal{T}_{\mathcal{G}}$ is a subbundle of the rank $2$ bundle 
$\mathcal{T}_{\mathcal{D}}$. 
From \cite[Lemma~2.2.4]{OkSchSpi}, we conclude that there is a splitting of vector bundles 
\[
\mathcal{T}_{\mathcal{D}}=\mathcal{T}_{\mathcal{G}} \oplus H,
\] 
for some rank one vector bundle $H$ over $\mathbb{P}^n$. 
If we denote the rank one locally free sheaf associated to $H$ by 
$\mathcal{O}_{\mathbb{P}^n}(h)$, with $h \in \mathbb{Z}$, 
we have the splitting 
\[
\mathcal{T}_{\mathcal{D}}=\mathcal{T}_{\mathcal{G}} \oplus \mathcal{O}_{\mathbb{P}^n}(h)
\]
as locally free sheaves. 
The factor $\mathcal{O}_{\mathbb{P}^n}(h)$ corresponds to the tangent sheaf of a 
one-dimensional foliation $\mathcal{H}$ on $\mathbb{P}^n$ tangent to $\mathcal{D}$ 
satisfying $\deg(\mathcal{H}) = 1 - h$. 
In particular,
\[
\mathcal{T}_{\mathcal{D}}=\mathcal{T}_{\mathcal{G}} \oplus \mathcal{T}_{\mathcal{H}}. \qedhere
\]
\end{proof}

\begin{remark}\label{R:decomponability_G_D}
Suppose that $\G$ and $\D$ satisfy the hypotheses of Corollary~\ref{C:divides}. It follows from \eqref{E:deg_aditive} that if $d=\deg(\D)$ and $e=\deg(\G)$, then
\[
\TD\cong \mathcal O_{\mathbb P^n}(1-e) \oplus \mathcal O_{\mathbb P^n}(1+e-d).
\]
\end{remark}

\begin{cor}\label{C:cod_3_divides}
Let $\D$ be a two-dimensional distribution on $\mathbb P^n$, $n \ge 3$, and $\G$ a one-dimensional foliation (resp. $v$ a holomorphic vector field) on $\Pj^n$ tangent to $\D$. Assume that $\codim( \sing (\G) )\geq 3$ (resp. $\codim( \sing (v) )\geq 3$). Then $\TD$ splits.
\end{cor}
\begin{proof}
It follows from Corollary~\ref{C:Locally_divides_general} that $\G$ (resp. both $v$ and the one-dimensional foliation defined by $v$) locally divides $\D$. The result follows from Corollary~\ref{C:divides}.
\end{proof}

\begin{cor}
Let $\D$ be a two-dimensional distribution on $\mathbb P^n$, $n \ge 3$, and $\G$ a one-dimensional foliation on $\Pj^n$ tangent to $\D$. If $\G$ locally divides $\D$, then $\deg(\G) \leq \deg(\D)$. In particular, if $ \codim \sing(\G) \geq 3$ then $\deg(\G) \leq \deg(\D)$.
\end{cor}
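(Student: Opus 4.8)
The plan is to obtain this almost immediately from Proposition~\ref{P:divides} together with the additivity of degrees recorded in the introduction. First I would invoke Proposition~\ref{P:divides}: since $\G$ locally divides $\D$, there is a one-dimensional foliation $\h$ on $\Pj^n$ tangent to $\D$ such that $\TD = \TG \oplus \Th$. This reduces the problem to comparing $\deg(\G)$ with $\deg(\D)$ through this splitting.

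Next I would read off the degrees from the twisting types of the summands. Writing $e = \deg(\G)$ and $d = \deg(\D)$, we have $\TG \cong \mathcal{O}_{\Pj^n}(1-e)$, and the remark following Proposition~\ref{P:divides} gives $\Th \cong \mathcal{O}_{\Pj^n}(1+e-d)$. Since a summand $\mathcal{O}_{\Pj^n}(1-\deg(\h))$ corresponds to a foliation of degree $\deg(\h)$, this yields $\deg(\h) = d - e$; equivalently, one may simply appeal to the additivity $\deg(\D) = \deg(\G) + \deg(\h)$ valid whenever $\TD = \TG \oplus \Th$, as noted for split tangent sheaves in the introduction.

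The decisive observation is then that $\deg(\h) \geq 0$, because the degree of any one-dimensional foliation on $\Pj^n$ is a nonnegative integer, being the common degree of the homogeneous polynomial components of a defining vector field. Hence $\deg(\G) = \deg(\D) - \deg(\h) \leq \deg(\D)$, which is the claim. For the ``in particular'' statement, if $\codim(\sing(\G)) \geq 3$ then Corollary~\ref{C:cod_3_divides} shows that $\G$ locally divides $\D$, so the inequality $\deg(\G) \leq \deg(\D)$ follows from the first part. There is no genuine obstacle here: the only point requiring care is the sign convention in passing between the twist of a line bundle $\mathcal{O}_{\Pj^n}(1-\deg(\cdot))$ and the degree of the associated foliation, after which the nonnegativity of $\deg(\h)$ closes the argument.
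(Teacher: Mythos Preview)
Your proof is correct and follows essentially the same route as the paper: invoke Proposition~\ref{P:divides} to split $\TD = \TG \oplus \Th$, use the additivity $\deg(\D) = \deg(\G) + \deg(\h)$ together with $\deg(\h) \geq 0$, and then derive the ``in particular'' clause from Corollary~\ref{C:cod_3_divides}. The only difference is that you spell out explicitly why $\deg(\h) \geq 0$, which the paper leaves implicit.
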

\begin{proof}
Since $\mathcal{G}$ locally divides $\mathcal{D}$, by Corollary~\ref{C:divides} 
there exists a one-dimensional foliation $\mathcal{H}$ on $\mathbb{P}^n$ 
tangent to $\mathcal{D}$ such that 
\[
\mathcal{T}_{\mathcal{D}}=\mathcal{T}_{\mathcal{G}} \oplus \mathcal{T}_{\mathcal{H}}.
\]
We obtain from \eqref{E:deg_aditive} that 
\[
\deg(\mathcal{D})=\deg(\mathcal{G})+\deg(\mathcal{H}),
\]
which implies $\deg(\mathcal{G}) \leq \deg(\mathcal{D})$. 
If $\codim \sing(\mathcal{G}) \geq 3$, by Corollary~\ref{C:Locally_divides_general} 
we have that $\mathcal{G}$ locally divides $\mathcal{D}$ and the result follows.
\end{proof}

\section{Proofs of Theorems A and B}

The next result will be used in the proof of Theorem~\ref{T:A}.

\begin{lemma}\label{L:divides_f_and_g}
For $n \geq 3$ and $1 \leq k < n$, let $\omega \in \Omega^{n-k}(\mathbb{C}^n,0)$, $X_1,\ldots,X_k \in \mathcal{X}(\mathbb{C}^n,0)$, and $\Theta \in \Omega^n(\mathbb{C}^n,0)$ be such that $\omega=i_{X_1} \cdots i_{X_k}\Theta$ satisfies $\codim(\sing(\omega)) \geq 2$. Let $X \in \mathcal{X}(\mathbb{C}^n,0)$ be given by $
X=f_1X_1+\ldots+f_k X_k$, with $f_1,\ldots,f_k \in \mathcal{O}_{\mathbb{C}^n,0}$. Then there are $Y_1,\ldots,Y_{k-1}\in \mathcal{X}(\mathbb{C}^n,0)$ and $\Lambda \in \Omega^n(\mathbb{C}^n,0)$ such that 
\begin{equation}\label{E:L:divides_f_and_g}
\omega=i_X i_{Y_1} \cdots i_{Y_{k-1}}\Lambda
\end{equation}
if and only if $f_i(0) \neq 0$ for some $i \in \{1,\ldots ,k\}$. Moreover, if $k>1$, we can choose $\Lambda=\Theta$ in \eqref{E:L:divides_f_and_g}. In particular, for $k=2$, we have that $X=f_1X_1+f_2 X_2$ divides $\omega$ if and only if $f_1(0) \neq 0$ or $f_2(0) \neq 0$.
\end{lemma}
\begin{proof}
Suppose that $f_i(0) \neq 0$ for some $i \in \{1,\ldots ,k\}$. As
\[
i_{X_1} \cdots i_{X_{i-1}} i_X i_{X_{i+1}} \cdots i_{X_k}\Theta=f_i \omega,
\]
we can write
\[
\omega=
i_X i_{X_1}\cdots i_{X_{i-1}} i_{X_{i+1}}\cdots i_{X_k}
\biggl(
\frac{(-1)^{i-1}}{f_i}\Theta
\biggr).
\]
and for $\Lambda= \frac{(-1)^{i-1}}{f_i}\Theta$, clearly we can pick $Y_1,\ldots,Y_{k-1}\in \mathcal{X}(\mathbb{C}^n,0)$ such that \eqref{E:L:divides_f_and_g} holds. If $k>1$, by replacing $Y_1$ by $\frac{(-1)^{i-1}}{f_i} Y_1$, we see that we can choose $\Lambda=\Theta$ in \eqref{E:L:divides_f_and_g}.

Now, assume by contradiction that $f_1(0)=\cdots=f_k(0)=0$ and there are $Y_1,\ldots,Y_{k-1}\in \mathcal{X}(\mathbb{C}^n,0)$ and $\Lambda \in \Omega^n(\mathbb{C}^n,0)$ such that \eqref{E:L:divides_f_and_g} holds. Define $Z_1=X,Z_2=Y_1,\ldots,Z_k=Y_{k-1}$. For each $j \in \{1,\ldots,k\}$, since $i_{Z_j}\omega=0$, by Lemma~\ref{R:existence_functions} there are unique $g_{1j},\ldots,g_{kj} \in \mathcal{O}_{\mathbb{C}^n,0}$ such that
\[
Z_j=g_{1j}X_1+\cdots+g_{kj}X_k.
\]
Note that $g_{11}=f_1,\ldots,g_{k1}=f_k$. Since $\codim(\sing(\omega)) \geq 2$, both $\Theta$ and $\Lambda$ are nonvanishing. Therefore, there is $h \in \mathcal{O}^*_{\C^n,0}$ such that $\Lambda=h \cdot \Theta$.

Since 
\[
\omega=i_{X_1} \cdots i_{X_k}\Theta=i_{Z_1} \cdots i_{Z_{k}}\Lambda,
\]
a straightforward computation shows that $\omega=h\cdot \det(M(z)) \cdot \omega$, where $M(z)$ is the $k \times k$ matrix 
\[
M(z)=[g_{ij}(z)]_{i=1,\ldots,k,j=1,\ldots,k}.
\]   

Thus $h \cdot \det(M(z))\equiv 1$. On the other hand, since $g_{11}(0)=\cdots=g_{k1}(0)=0$, we see that $\det(M(0))=0$. We obtain a contradiction and the result follows.

The particular case of the lemma follows immediately from the observation that for every $\eta \in \Omega^{n-1}(\mathbb{C}^n,0)$ there is $Y \in \mathcal{X}(\mathbb{C}^n,0)$ such that $\eta=i_Y\Theta$, see Remark~\ref{R:iso_X_O}.
\end{proof}

\begin{proof}[Proof of Theorem~\ref{T:A}]
Let $X_1,\ldots,X_k \in \mathcal{X}(\mathbb{C}^n,0)$ be such that
\begin{equation}\label{E:proof_the_A_4}
\omega=i_{X_1} \cdots i_{X_k} (dz_1 \wedge \cdots \wedge dz_n)
\end{equation}
satisfies $\codim(\sing(\omega)) \geq 2$ and it defines $\D$, see Proposition~\ref{P:known_fact_2}. Assume that $\G$ is defined by $X \in \mathcal{X}(\C^n,0)$, with $\codim(\sing(X)) \geq 2$. Since $i_X \omega=0$, by Lemma~\ref{R:existence_functions} there exist $f_1,\ldots,f_k \in \mathcal{O}_{\mathbb{C}^n,0}$ such that 
\begin{equation}\label{E:proof_the_A}
X=f_1 X_1 +\cdots+f_k X_k.
\end{equation}
There are two mutually exclusive cases:
\begin{itemize}
\item There is $i \in \{1,\ldots,k\}$ such that $f_i(0) \neq 0$.

In this case, by Lemma~\ref{L:divides_f_and_g} there are $Y_1,\ldots,Y_{k-1}\in \mathcal{X}(\mathbb{C}^n,0)$ and $\Theta \in \Omega^n(\C^n,0)$ such that
\begin{equation}\label{E:proof_the_A_2}
\omega=i_X i_{Y_1} \cdots i_{Y_{k-1}}\Theta.
\end{equation}
It follows that $\codim(\sing(Y_j)) \geq 2$ for each $j \in  \{1,\ldots,k\}$, since $\codim(\sing(\omega)) \geq 2$. Let $\h_1,\ldots,\h_{k-1}$ be the germs of the one-dimensional foliations at $0$ defined by $Y_1,\ldots,Y_{k-1}$, respectively. We conclude from Proposition~\ref{P:known_fact} that
\begin{equation}\label{E:proof_the_A_6}
\TD=\TG \oplus T_{\h_1}\oplus \cdots \oplus T_{\h_{k-1}},
\end{equation}
and we are in situation \eqref{1:T:A}.

\item $f_1(0)=\cdots=f_k(0)=0$.

In this case, by considering the linear part of both sides of \eqref{E:proof_the_A}, we see that
\begin{equation}\label{E:proof_the_A_3}
\hat{X}_1=g_1 W_1+\cdots+g_k W_k,
\end{equation}
where $\hat{X}_1$ is the linear part of $X$, $g_1,\ldots,g_k$ are homogeneous polynomials of degree $1$ in $\mathbb{C}^n$ and $W_1,\ldots,W_k \in \mathcal{X}(\mathbb{C}^n,0) $ are germs of constant vector fields. Note that 
\begin{equation}\label{E:proof_the_A_5}
W_1=X_1(0),\ldots,W_k=X_k(0).
\end{equation}
One can check that \eqref{E:proof_the_A_3} implies that the rank of 
$DX(0)=D\hat{X}_1(0)$ is at most $k$, and if it is $k$ then 
$W_1 \wedge \cdots \wedge W_k \neq 0$. By our hypothesis, we obtain that 
the linear rank of $\mathcal{G}$, which is equal to the linear rank of $X$, 
is $k$. Furthermore, from \eqref{E:proof_the_A_4} and \eqref{E:proof_the_A_5} we have
\[
\omega(0)=i_{W_1}\cdots i_{W_k}(dz_1 \wedge \cdots \wedge dz_n) \neq 0,
\]
since $W_1 \wedge \cdots \wedge W_k \neq 0$, which implies that the germ 
of distribution $\mathcal{D}$ is regular. Thus we are in situation \eqref{2:T:A}.
\end{itemize}

Finally, if \eqref{E:proof_the_A_6} holds then it follows from \eqref{E:proof_the_A_2} that $\sing(\G) \subset \sing(\D)$. On the other hand, if $\sing(\G) \subset \sing(\D)$, of course the germ of distribution $\D$ is not regular and we are not in the situation \eqref{2:T:A}, which concludes the proof.
\end{proof}

\begin{cor}\label{C:not_free}
For $n \geq 3$ and $1 \leq k < n$, let $\D$ be a $k$-dimensional distribution defined on a complex manifold $M$ of dimension $n$, and $\G$ be a one-dimensional foliation tangent to $\D$, defined on an open neighborhood of $p \in M$. Assume that $p$ is a singularity of both $\D$ and $\G$, and the linear rank of $\G$ at $p$ is at least $k$. Then:
\begin{enumerate}
\item\label{one:C:not_free} If $\TD$ is locally free at $p$, we have that $\G$ locally divides $\D$ at $p$.
\item\label{two:C:not_free} If $\sing(\G)$ is not contained in $\sing(\D)$, then $\TD$ is not locally free at $p$. In particular $\TD$ is not locally free.
\end{enumerate}
\end{cor}
\begin{proof}
Assume that the germs of $\D$ and $\G$ at $p$ are defined by $\omega \in \Omega^{n-k}(M,p)$ and $X \in \mathcal{X}(M,p)$, respectively, with $\codim(\sing(\omega)) \geq 2$ and $\codim(\sing(X)) \geq 2$. If $\TD$ is locally free at $p$, note that we are in situation \eqref{1:T:A} of Theorem~\ref{T:A}. By Proposition~\ref{P:known_fact}, it follows that an equality holds as in \eqref{E:proof_the_A_2}, for some $Y_1,\ldots,Y_{k-1}\in \mathcal{X}(M,p)$, and we conclude in particular that $\G$ locally divides $\D$ at $p$.

Next, assume by contradiction that $\sing(\G)$ is not contained in $\sing(\D)$ and $\TD$ is locally free at $p$. Then Theorem~\ref{T:A} applies. On the other hand, we are not in situation \eqref{1:T:A} of Theorem~\ref{T:A}, because $\sing(\G)$ is not contained in $\sing(\D)$, and also we are not in situation \eqref{2:T:A} of Theorem~\ref{T:A}, because $\D$ is not regular at $p$. We obtain a contradiction, which shows that \eqref{two:C:not_free} also holds.
\end{proof}

\begin{cor}\label{C:crite_div_loc_free}
For $n \geq 3$ and $1 < k < n$, let $\omega \in \Omega^{n-k}(\mathbb{C}^n,0)$ be such that $\omega(0)=0$, with $\codim(\sing(\omega)) \geq 2$, defining a germ of a $k$-dimensional singular distribution with locally free tangent sheaf, and $X \in \mathcal{X}(\mathbb{C}^n,0) $ satisfying $i_X\omega=0$. If $\rank(DX(0)) \geq k$ then $X$ divides $\omega$.
\end{cor}
\begin{proof}
If $X(0)\neq 0$, then it is clear that $X$ divides $\omega$. On the other 
hand, $X(0)=0$ and $\rank(DX(0)) \geq k$ mean that $X$ is a germ of a 
singular vector field tangent to $\omega$ and with linear rank at least $k$. 
Since $k \geq 2$, one can easily verify that this implies 
$\codim(\sing(X)) \geq 2$. Let $\mathcal{D}$ and $\mathcal{G}$ be the germs 
at $0$ of the $k$-dimensional distribution and the one-dimensional foliation 
defined by $\omega$ and $X$, respectively. By our hypothesis, $\mathcal{T}_{\mathcal{D}}$ 
is locally free, and as $\codim(\sing(X)) \geq 2$, we have that the linear 
rank of $\mathcal{G}$ is the linear rank of $X$. Since $0$ is a singularity 
of both $\mathcal{G}$ and $\mathcal{D}$, from part \eqref{one:C:not_free} of Corollary~\ref{C:not_free} we conclude that $X$ divides $\omega$.
\end{proof}

We see from its proof that Corollary~\ref{C:crite_div_loc_free} is also valid if $k=1$ and $\codim(\sing(X)) \geq 2$.

\begin{remark}\label{R:obs_divi_loc_free_dim_2}
Corollary~\ref{C:crite_div_loc_free} is particularly interesting in the case 
$k=2$. In fact, assuming that $\codim(\sing(\omega)) \geq 2$, in order for 
$X \in \mathcal{X}(\mathbb{C}^n,0)$ to divide 
$\omega \in \Omega^{n-2}(\mathbb{C}^n,0)$, it is necessary that $\omega$ 
define a germ of a two-dimensional distribution $\mathcal{D}$ with locally 
free tangent sheaf (see also Corollary~\ref{C:Converse}). The reason is that 
every $\eta \in \Omega^{n-1}(\mathbb{C}^n,0)$ is of the form 
$\eta = i_Y \Theta$, for some $Y \in \mathcal{X}(\mathbb{C}^n,0)$ and $\Theta \in \Omega^n(\mathbb{C}^n,0)$, see Remark~\ref{R:iso_X_O}. The 
particular case of Corollary~\ref{C:dual_division} guarantees that if 
$\codim(\sing(X)) \geq 3$, then $X$ satisfies the division property by 
$(n-2)$-forms; in particular, $X$ divides $\omega$. Corollary~\ref{C:crite_div_loc_free} provides an alternative condition 
implying that $X$ divides $\omega$, provided we assume a priori that 
$\omega$ defines a germ of a nonregular two-dimensional distribution with 
locally free tangent sheaf.
\end{remark}

Recall that the singular set of a coherent sheaf $\mathcal{T}$ defined on a complex manifold $M$ is
\[
\sing(\mathcal{T})=\{p \in M: \mathcal{T}_p \text{ is not } \mathcal{O}_{M,p}\text{-free}\},
\]
where $\mathcal{T}_p$ denotes the stalk of $\mathcal{T}$ at a point $p \in M$. We have that $\sing(\mathcal{T})$ is an analytic subset of $M$ of codimension at least $1$, see for example the corollary of \cite[Lemma~1.1.4]{OkSchSpi}.

Given a $k$-dimensional holomorphic distribution $\mathcal{D}$ on $M$ and a one-dimensional foliation $\mathcal{G}$ tangent to $\D$, set
\[
\mathcal{S}(\G,\D)=\{p \in M:\G \text{ does not locally divide } \D \text{ at } p\}.
\]

We observe that $\mathcal{S}(\G,\D)$ is a closed subset of $M$. Indeed, if $p \not\in \mathcal{S}(\G,\D)$, i.e., $\G$ locally divides $\D$ at $p$, then by definition there exist an open set $U$ containing $p$, a vector field $X \in \mathcal{X}(U)$, and a form $\omega \in \Omega^{n-2}(U)$, with $\codim(\sing(X)) \geq 2$ and $\codim(\sing(\omega)) \geq 2$, defining the restrictions of $\G$ and $\D$ to $U$, respectively, and a form $\eta \in \Omega^{n-1}(U)$ such that $\omega=i_X\eta$. Then $\G$ locally divides $\D$ at every point of $U$, which shows that $\mathcal{S}(\G,\D)$ is a closed subset of $M$.

Additionally, in the case where $\D$ has dimension $2$, we show that $\mathcal{S}(\G,\D)$ is an analytic subset of $M$. Indeed, we have from Lemma~\ref{R:dim_2_loc_free} that
\[
\mathcal{S}(\G,\D)=\sing(\TD/\TG).
\]
Furthermore, it follows that $\TD/\TG$ is torsion-free, because from Remark~\ref{R:tang_sheaf_properties} we have that $\TM/\TG$ is torsion-free and $\TD \subset \TM$. Since the singular set of a torsion-free coherent sheaf has codimension at least $2$, see for example the corollary of \cite[Lemma~1.1.8]{OkSchSpi}, we obtain that $\codim(\mathcal{S}(\G,\D)) \geq 2$.

In Theorem~\ref{P:divide_pure} below, we present a more precise description of $\mathcal{S}(\G,\D)$ when $\D$ has dimension $2$. The proof will rely on the following:

\begin{lemma}\label{L:divide_out_codim_at_least_3}
Let $\D$ be a two-dimensional distribution on a complex manifold $M$ of dimension $n \geq 3$, and $\G$ a one-dimensional foliation on $M$ tangent to $\D$. Suppose there exists an analytic subset $\mathcal{W} \subset M$ of codimension at least $3$ such that $\G$ locally divides $\D$ at every point $p \in M \setminus \mathcal{W}$. Then $\G$ locally divides $\D$.
\end{lemma}
\begin{proof}
Given $p \in \mathcal{W}$, let $U \subset M$ be an open set containing $p$, biholomorphic to a polydisc in $\C^n$, such that $H^1(U,\mathcal{O}_U)=0$ and there exists a nonvanishing $\Theta \in \Omega^n(U)$. Assume furthermore that $\D_{|U}$ and $\G_{|U}$ are defined by $\omega \in \Omega^{n-2}(U)$ and $X \in \mathcal{X}(U)$, respectively, with $\codim(\sing(\omega)) \geq 2$ and $\codim(\sing(X)) \geq 2$ (see Remark~\ref{R:form_in_polydisc}).

We will adapt a well-known argument that can be found, for instance, in the proof of \cite[Proposition~1]{MR3436562}. Since $H^1(U,\mathcal{O}_U)=0$ and $\codim(\mathcal{W}) \geq 3$, it follows from a theorem of H. Cartan that $H^1(V,\mathcal{O}_V)=0$, where $V:=U\setminus \mathcal{W}$. As $\G$ locally divides $\D$ at every point of $V$, we obtain an open covering $\mathcal{U}=\{U_i\}_{i \in I}$ of $V$ by open sets $U_i$ such that
\begin{equation}\label{E:divide_out_codim_at_least_3}
\omega=i_X \eta_i
\end{equation}
holds on $U_i$, for some $\eta_i \in \Omega^{n-1}(U_i)$. Let $Y_i \in \mathcal{X}(U_i)$ be such that
\begin{equation}\label{E:2:divide_out_codim_at_least_3}
\eta_i = i_{Y_i}\Theta,
\end{equation}
see Remark~\ref{R:iso_X_O}.

From \eqref{E:divide_out_codim_at_least_3} and \eqref{E:2:divide_out_codim_at_least_3}, we conclude that
\[
i_X i_{Y_i-Y_j} \Theta =0
\]
whenever $U_i \cap U_j \neq \emptyset$. Since $\codim(\sing(X)) \geq 2$, Lemma~\ref{R:existence_functions} implies that there exists $g_{ij} \in \mathcal{O}_M(U_i \cap U_j)$ such that $Y_i-Y_j=g_{ij} \cdot X$ on $U_i \cap U_j$. Note that $\{g_{ij}\}_{U_i \cap U_j \ne \emptyset}$ is an additive cocycle. As $H^1(V,\mathcal{O}_V)=0$, the cocycle is trivial, and, after refining the open covering if necessary, there exists a collection $\{h_i\}_{i \in I}$, with $h_i \in \mathcal{O}_M(U_i)$, such that $g_{ij}=h_i-h_j$ on $U_i \cap U_j$. Hence, there exists $Y \in \mathcal{X}(V)$ such that $\omega=i_X i_Y \Theta$ on $V$, and $Y_{|U_i}=Y_i-h_i \cdot X$. Since $\codim(\mathcal{W}) \geq 3$, it follows from Hartog's Theorem that $Y$ extends holomorphically to $U$, which implies that $\G$ locally divides $\D$ at $p$. As $p \in \mathcal{W}$ is arbitrary, the result follows.
\end{proof}

A closer look at the proof of Lemma~\ref{L:divide_out_codim_at_least_3} yields the following:

\begin{cor}\label{C:more_general_div_cod3}
Let $\omega \in \Omega^{n-2}(M)$ and $v \in \mathcal{X}(M)$ be defined on a complex manifold $M$ of dimension $n \geq 3$. Assume that $\codim(\sing (v)) \geq 2$. Define $U \subset M$ as the (open) set of points $p \in M$ for which there exists $\eta \in \Omega^{n-1}(M,p)$ such that
\begin{equation}\label{E:C:more_general_div_cod3}
\omega=i_v\eta
\end{equation}
holds on $(M,p)$. Suppose there exists an analytic subset $\mathcal{W} \subset M$ of codimension at least $3$ such that $M \setminus \mathcal{W} \subset U$. Then $U=M$.
\end{cor}

Corollary~\ref{C:more_general_div_cod3} can be viewed as a generalization of a result provided by Corollary~\ref{C:dual_division}. Indeed, given $X \in \mathcal{X}(\C^n,0)$ and $\omega \in \Omega^{n-2}(\C^n,0)$ satisfying $i_X \omega=0$, the particular case of Corollary~\ref{C:dual_division} asserts that if $\codim(\sing(X)) \geq 3$, then there exists $\eta \in \Omega^{n-1}(\C^n,0)$ such that $\omega=i_X \eta$. Once representatives of $X$ and $\omega$ are chosen in the same polydisc containing $0$, this conclusion also follows from Corollary~\ref{C:more_general_div_cod3} by taking $\mathcal{W}=\sing(X)$, since \eqref{E:C:more_general_div_cod3} holds at the regular points of $X$.

\begin{thm}\label{P:divide_pure}
Let $\D$ be a two-dimensional distribution on a complex manifold $M$ of dimension $n \geq 3$, and $\G$ a one-dimensional foliation on $M$ tangent to $\D$. Then $\mathcal{S}(\G,\D)$ is an analytic subset of $M$ containing $\sing(\TD)$. If $\mathcal{S}(\G,\D)$ is nonempty, it is the union of some irreducible components of $\sing(\G)$ of dimension $n-2$. In particular, whenever $\mathcal{S}(\G,\D)$ is nonempty, it is pure ($n-2$)-dimensional.
\end{thm}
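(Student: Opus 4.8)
The plan is to establish the two inclusions $\sing(\TD) \subseteq \mathcal{S} \subseteq \sing(\G)$ and then to analyze $\mathcal{S}$ locally at a generic point of each of its irreducible components, pinning down the dimension via the codimension bound $\codim(\mathcal{S}) \geq 2$ already obtained before the statement. For $\sing(\TD) \subseteq \mathcal{S}$: since $\TG$ is a line bundle, $(\TG)_p$ is free of rank one, and from the exact sequence $0 \to (\TG)_p \to (\TD)_p \to (\TD/\TG)_p \to 0$ freeness of $(\TD/\TG)_p$ would split the sequence and force $(\TD)_p$ to be free; hence $p \in \sing(\TD)$ implies $(\TD/\TG)_p$ is not free, i.e. $p \in \mathcal{S}$. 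For $\mathcal{S} \subseteq \sing(\G)$, I would show that $p \notin \sing(\G)$ implies $\G$ locally divides $\D$ at $p$. Working in a chart with $\varphi(p)=0$, the foliation is given by a vector field $X$ with $X(0)\neq 0$, which after a linear change we may take to be $\partial/\partial z_1$. Tangency gives $i_X\omega = 0$, so $\omega$ has no $dz_1$-component and thus $\omega = i_X\Theta'$ for an $(n-1)$-form $\Theta'$; writing $\Theta' = i_Y(dz_1\wedge\cdots\wedge dz_n)$ as in Remark \ref{R:dim_2_loc_free} yields $\omega = i_X i_Y \Theta$, so $X$ divides $\omega$ and $p \notin \mathcal{S}$.

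Next I fix an irreducible component $S$ of $\mathcal{S}$ with $S \not\subseteq \sing(\TD)$ and take a generic point $p \in S$, lying on no other component of $\mathcal{S}$ and outside $\sing(\TD)$. Near $p$ the sheaf $\TD$ is free, so by Proposition \ref{P:known_fact_2} we may write $\omega = i_Y i_Z \Theta$ on a neighborhood, and since $X$ is tangent to $\D$ we have $X = fY + gZ$ with $f,g$ holomorphic. Applying Lemma \ref{L:divides_f_and_g} at each nearby point shows that $\G$ fails to divide $\D$ exactly along $\{f = g = 0\}$, so $\mathcal{S} = \{f = g = 0\}$ near $p$. The zero set of two functions has dimension $\geq n-2$, while $\codim(\mathcal{S}) \geq 2$ forces $\dim S = n-2$. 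Moreover $\{f=g=0\} \subseteq \{fY+gZ = 0\} = \sing(X) = \sing(\G)$, so $S$ is contained in a component of $\sing(\G)$; since every component of $\sing(\G)$ has dimension $\leq n-2$, the irreducible $S$ must coincide with an $(n-2)$-dimensional component of $\sing(\G)$. Conversely, Corollary \ref{C:dual_division} shows that at a generic point of any component of $\sing(\G)$ of dimension $\leq n-3$, where $\codim(\sing X) \geq 3$, the field $X$ divides $\omega$, so such components contribute nothing to $\mathcal{S}$ beyond $\sing(\TD)$.

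Assembling these observations, every irreducible component of $\mathcal{S}$ is either contained in $\sing(\TD)$ or is an $(n-2)$-dimensional component of $\sing(\G)$, which is exactly the claimed description $\mathcal{S} = \sing(\TD) \cup \big(\text{certain } (n-2)\text{-dimensional components of } \sing(\G)\big)$. If $\TD$ is locally free then $\sing(\TD) = \emptyset$, so a nonempty $\mathcal{S}$ is a union of $(n-2)$-dimensional components of $\sing(\G)$ and is therefore pure $(n-2)$-dimensional. I expect the main obstacle to be the case $p \in \sing(\D) \setminus \sing(\TD)$: there $Y(p)$ and $Z(p)$ are linearly dependent (because $\omega(p)=0$) even though $(\TD)_p$ is free, so $\{f=g=0\}$ need not visibly equal $\sing(\G)$, and the argument must lean on the torsion-free codimension bound to force $\dim S = n-2$ and hence make $S$ fill out an entire component of $\sing(\G)$.
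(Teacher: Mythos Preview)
Your proposal is correct and follows essentially the same approach as the paper: establish the inclusions $\sing(\TD)\subseteq\mathcal{S}\subseteq\sing(\G)$, then at a point $p\in\mathcal{S}\setminus\sing(\TD)$ use the local presentation $\omega=i_Yi_Z\Theta$, $X=fY+gZ$ and Lemma~\ref{L:divides_f_and_g} to identify $\mathcal{S}\cap U=\{f=g=0\}$ and read off the dimension. The only cosmetic differences are that you spell out the two inclusions in more detail and invoke the torsion-free bound $\codim(\mathcal{S})\geq 2$ from the discussion preceding the theorem, whereas the paper appeals to Remark~\ref{R:dim_2_loc_free} and bounds the dimension directly via $\{f=g=0\}\subset\sing(\G)$; your ``Conversely'' paragraph using Corollary~\ref{C:dual_division} is already subsumed by your component argument and could be omitted.
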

\begin{proof}
Let $\sing(\G)_{n-2} \subset \sing(\G)$ denote the analytic subset of $M$ given by the union of the irreducible components of $\sing(\G)$ of dimension $n-2$. We start by showing that
\[
\sing(\TD) \subset\mathcal{S}(\G,\D) \subset \sing(\G)_{n-2}.
\]

If $p \not\in \mathcal{S}(\G,\D)$, then $\G$ locally divides $\D$ at $p$, and it follows from Lemma~\ref{R:dim_2_loc_free} that $(\TD)_p$ is $\mathcal{O}_{M,p}$-free. This shows that $\sing(\TD) \subset \mathcal{S}(\G,\D)$. On the other hand, if $p \not\in \sing(\G)$, we have in particular that $\codim(\sing(\G),p) \geq 3$. It then follows from the particular case of Corollary~\ref{C:dual_division} that $\G$ locally divides $\D$ at $p$, which shows that $\mathcal{S}(\G,\D) \subset \sing(\G)$. Since $\codim(\sing(\G)) \geq 2$, the same corollary implies that $\mathcal{S}(\G,\D) \subset \sing(\G)_{n-2}$.

Moreover, we already know that $\mathcal{S}(\G,\D)$ is an analytic subset of $M$. Lemma~\ref{L:divide_out_codim_at_least_3} clearly ensures that $\mathcal{S}(\G,\D)$ has no irreducible component of codimension at least $3$. Since $\mathcal{S}(\G,\D) \subset \sing(\G)_{n-2}$, the result follows.
\end{proof}

Although Definition~\ref{D:def_division} is local in nature, its global aspect becomes evident in the next two corollaries.

\begin{cor}\label{C:check_one-point}
Let $\D$ be a two-dimensional distribution on a complex manifold $M$ of dimension $n \geq 3$, and $\G$ a one-dimensional foliation on $M$ tangent to $\D$. Suppose that for every irreducible component $\mathcal{V}$ of $\sing(\G)$ of dimension $n-2$, there exists $p \in \mathcal{V}$ such that $\G$ locally divides $\D$ at $p$. Then $\G$ locally divides $\D$.
\end{cor}
\begin{proof}
Assume, on the contrary, that $\G$ does not locally divide $\D$, that is, $\mathcal{S}(\G,\D) \neq \emptyset$. Let $q \in \mathcal{S}(\G,\D)$. By Theorem~\ref{P:divide_pure}, there exists an irreducible component $\mathcal{V}$ of $\sing(\G)$ of dimension $n-2$ passing through $q$ such that $\mathcal{V} \subset \mathcal{S}(\G,\D)$.

By hypothesis, there exists $p \in \mathcal{V}$ such that $\G$ locally divides $\D$ at $p$, that is, $p \notin \mathcal{S}(\G,\D)$, which yields a contradiction.
\end{proof}

Let $\G$ and $\D$ be as in Theorem~\ref{P:divide_pure}. Given $p \in \mathcal{S}(\G,\D)$, Theorem~\ref{P:divide_pure} ensures that there exists at least one irreducible component $\mathcal{V}$ of $\sing(\G)$ of dimension $n-2$ containing $p$ such that $\mathcal{V} \subset \mathcal{S}(\G,\D)$. Note that, in principle, there may exist other irreducible components of $\sing(\G)$ of dimension $n-2$ passing through $p$ that are not contained in $\mathcal{S}(\G,\D)$.

As a consequence, given an irreducible component $\mathcal{V}$ of $\sing(\G)$ of dimension $n-2$ and a point $p \in \mathcal{V}$ that does not lie in any other such component, it follows that $\mathcal{V} \subset \mathcal{S}(\G,\D)$ if and only if $\G$ does not locally divide $\D$ at $p$. We thus obtain the following:

\begin{cor}\label{C:finite_check}
Let $\D$ be a two-dimensional distribution on a complex manifold $M$ of dimension $n \geq 3$, and $\G$ a one-dimensional foliation on $M$ tangent to $\D$. Let $\{\mathcal{V}_i\}_{i \in I}$ denote the family of pairwise distinct irreducible components of $\sing(\G)$ of dimension $n-2$. For arbitrary points $p_i \in \mathcal{V}_i$ with $p_i \not\in \mathcal{V}_j$ whenever $j \neq i$, we have
\[
\mathcal{S}(\G,\D)=\bigcup_{\G \text{ does not locally divide }\D \text{ at } p_i}  \mathcal{V}_i.
\]
In particular, if $M$ is compact, there exists a finite subset $\{p_1,\ldots,p_k\} \subset M$ such that $\G$ locally divides $\D$ if and only if $\G$ locally divides $\D$ at each point of $\{p_1,\ldots,p_k\}$.
\end{cor}

Note that the particular case of the previous corollary follows simply from the fact that the number of irreducible components of $\sing(\G)$ of dimension $n-2$ is finite when $M$ is compact.

Given \(\omega \in \Omega^{n-k}(\mathbb{C}^n,0)\) and 
\(X \in \mathcal{X}(\mathbb{C}^n,0)\), to distinguish from the usual 
notion of division, we say that \(X\) \emph{divides} \(\omega\) 
\emph{specially} if there exist 
\(X_1,\ldots,X_{k-1} \in \mathcal{X}(\mathbb{C}^n,0)\) 
and \(\Theta \in \Omega^{n}(\mathbb{C}^n,0)\) such that
\[
  \omega = i_X i_{X_1} \cdots i_{X_{k-1}} \Theta.
\]

Clearly, if \(X\) divides \(\omega\) specially, then \(X\) divides 
\(\omega\). Moreover, when \(k=2\), the notions of division 
and special division coincide; see 
Remark~\ref{R:obs_divi_loc_free_dim_2}.

As in Definition~\ref{D:def_division}, it can be verified that the definition of special division extends to germs of distributions and germs of one-dimensional foliations tangent to them. The analogue to Definition~\ref{D:def_division} would be the following: For a $k$-dimensional distribution $\D$ on a complex manifold $M$ of dimension $n \geq 3$, and $\G$ a one-dimensional foliation on $M$ tangent to $\D$, \(\G\) 
\emph{locally divides} \(\D\) \emph{specially} at \(p \in M\) if, for any $X \in \mathcal{X}(M,p)$ and any $\omega \in \Omega^{n-k}(M,p)$, with $\codim(\sing(X)) \geq 2$ and $\codim(\sing(\omega)) \geq 2$, defining the germs of $\G$ and $\D$ at $p$, respectively, there are $X_1,\ldots,X_{k-1}  \in \mathcal{X}(M,p)$ and $\Theta \in \Omega^{n}(M,p)$ such that
\[
\omega=i_X i_{X_1} \cdots i_{X_{k-1}}\Theta.
\]

We have the following:

\begin{prop}
Let $\D$ be a $k$-dimensional distribution on a complex manifold $M$ of dimension $n \geq 3$, and $\G$ a one-dimensional foliation on $M$ tangent to $\D$. Assume that $\TD$ is locally free. Let $\mathcal{S}(\G,\D)_e$ be the set of points $p \in M$ where $\G$ does not locally divide $\D$ specially at $p$. Then $\mathcal{S}(\G,\D)_e$ is an analytic subset of $M$, and if it is not empty then it is of dimension at least $n-k$.
\end{prop}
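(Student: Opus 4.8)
The plan is to mimic the proof of Theorem \ref{P:divide_pure}, replacing the pair of functions $(f,g)$ used there by the $k$-tuple $(f_1,\dots,f_k)$ arising from a local decomposition of $\D$. First I would fix a point $p\in M$ and, using that $\TD$ is locally free, invoke Proposition \ref{P:known_fact_2} to write $\D$ on a small open set $U\ni p$ as being defined by $\omega=i_{X_1}\cdots i_{X_k}(\Theta)$ with $X_1,\dots,X_k\in\mathcal{X}(U)$ and $\Theta\in\Omega^n(U)$; after shrinking $U$ I may also assume $\G$ is given by a single vector field $X\in\mathcal{X}(U)$. Since $X$ is tangent to $\D$, Remark \ref{R:existence_functions} produces unique holomorphic functions $f_1,\dots,f_k$ on $U$ with $X=f_1X_1+\cdots+f_kX_k$.

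The key local identity is then that $\G$ fails to locally divide $\D$ specially at a point $q\in U$ exactly when $f_1(q)=\cdots=f_k(q)=0$. This is precisely the statement recorded after Theorem \ref{gen_Theorem_A}, the $k$-dimensional analog of Lemma \ref{L:divides_f_and_g}: passing to the germ at $q$, the decomposition $\omega=i_{X_1}\cdots i_{X_k}(\Theta)$ still represents $\D$, and $X_1,\dots,X_k$ still generate $(\TD)_q$ because $\TD_{|U}$ is a direct sum of the tangent sheaves of the foliations defined by the $X_i$; hence special division at $q$ holds if and only if some $f_i(q)\neq 0$. Consequently
\[
\mathcal{S}_e\cap U=\{q\in U: f_1(q)=\cdots=f_k(q)=0\},
\]
which is an analytic subset of $U$. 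As analyticity is a local property and $\mathcal{S}_e$ is intrinsically defined (independent of the chart, by the very definition of special local division), these descriptions patch and $\mathcal{S}_e$ is an analytic subset of $M$.

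Finally I would deduce the dimension bound. At any point $q$ of a nonempty $\mathcal{S}_e\cap U$, the local ring $\mathcal{O}_{M,q}$ is regular of dimension $n$, and $\mathcal{S}_e\cap U$ is cut out near $q$ by the $k$ functions $f_1,\dots,f_k$; by Krull's height theorem the ideal they generate has height at most $k$, so every irreducible component of $\mathcal{S}_e$ through $q$ has codimension at most $k$, i.e. $\dim_q\mathcal{S}_e\ge n-k$. Since this holds at every point, $\mathcal{S}_e$ has dimension at least $n-k$ whenever it is nonempty.

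I expect the only delicate point to be the pointwise local identification $\mathcal{S}_e\cap U=\{f_1=\cdots=f_k=0\}$: one must check that the generators $X_1,\dots,X_k$ of $\TD$ chosen on $U$ remain valid generators of $(\TD)_q$ at \emph{every} $q\in U$ (which is exactly what local freeness of $\TD$ on $U$ supplies), so that the criterion following Theorem \ref{gen_Theorem_A} may be applied at each $q$ rather than only at the center $p$. Once this is secured, the dimension estimate is the standard codimension bound for the common zero locus of $k$ holomorphic functions.
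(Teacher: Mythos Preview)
Your proposal is correct and follows essentially the same approach the paper indicates: the paper states only that the result is obtained ``following the line of proof of Theorem \ref{P:divide_pure}, based on the comment made after the statement of Theorem \ref{gen_Theorem_A}'', and your argument is precisely that---local decomposition via Proposition \ref{P:known_fact_2}, the identification $\mathcal{S}_e\cap U=\{f_1=\cdots=f_k=0\}$ from the $k$-dimensional analogue of Lemma \ref{L:divides_f_and_g}, and the standard codimension bound for a locus cut out by $k$ holomorphic functions. Your care about the generators $X_1,\dots,X_k$ remaining valid at every $q\in U$ is well placed and correctly resolved by local freeness.
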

\begin{proof}
Since $\mathcal{T}_{\mathcal{D}}$ is locally free, by Proposition~\ref{P:known_fact_2}, for each $p \in M$ there exists an open set $U \subset M$ containing $p$ and vector fields $X_1,\ldots,X_k \in \mathcal{X}(U)$, together with $\Theta \in \Omega^n(U)$, such that
\[
\omega = i_{X_1} \cdots i_{X_k}\Theta
\]
satisfies $\codim(\sing(\omega)) \geq 2$ and defines $\mathcal{D}_{|U}$.

By shrinking $U$ if necessary, we can assume that $\G_{|U}$ is defined by $X \in \mathcal{X}(U)$, with $\codim(\sing(X)) \ge 2$. By Lemma~\ref{R:existence_functions}, there are $f_1,\ldots,f_k \in \mathcal{O}_M(U)$ such that $X=f_1 X_1 +\cdots+f_k X_k$. It follows from Lemma~\ref{L:divides_f_and_g} that
\[
\mathcal{S}(\G,\D)_e \cap U = \{q \in U: f_1(q)=\cdots=f_k(q)=0\},
\]
which is sufficient to finish the proof.
\end{proof}

Theorem~\ref{T:B} follows from the following:

\begin{lemma}\label{T:B_more_general}
Let $\D$ be a two-dimensional distribution on a complex manifold $M$ of dimension $n \geq 3$, with locally free tangent sheaf, and $\G$ a one-dimensional foliation on $M$, tangent to $\D$ and with linear rank at least $2$. Then the following are equivalent:
\begin{enumerate}
\item\label{1:T:B_more_general} Every irreducible component of $\sing(\G)$ of dimension $n-2$ intersects $\sing(\D)$.
\item\label{2:T:B_more_general} Every irreducible component of $\sing(\G)$ intersects $\sing(\D)$.
\item\label{3:T:B_more_general} $\sing(\G) \subset \sing(\D)$.
\item\label{4:T:B_more_general} $\G$ locally divides $\D$.
\end{enumerate}
\end{lemma}
\begin{proof}
It is clear that \eqref{4:T:B_more_general} implies \eqref{3:T:B_more_general}, \eqref{3:T:B_more_general} implies \eqref{2:T:B_more_general}, and \eqref{2:T:B_more_general} implies \eqref{1:T:B_more_general}. Assume that \eqref{1:T:B_more_general} holds. We will show that $\G$ locally divides $\D$, hence \eqref{4:T:B_more_general} holds and the result follows. 

In fact, assume by contradiction that there exists $q \in M$ such that $\G$ does not locally divide $\D$ at $q$. By Theorem~\ref{P:divide_pure}, there is an irreducible component $\mathcal{W}$ of $\sing(\G)$ of dimension $n-2$ such that $q \in \mathcal{W}$ and $\G$ fails to locally divide $\D$ at every point of $\mathcal{W}$. By our assumption, there is $p \in \mathcal{W} \cap \sing(\D)$, and since $\TD$ is locally free at $p$, it follows from part \eqref{one:C:not_free} of Corollary~\ref{C:not_free} that $\G$ locally divides $\D$ at $p$, yielding a contradiction.
\end{proof}

\begin{proof}[Proof of Theorem~\ref{T:B}]
Theorem~\ref{T:B} follows from Lemma~\ref{T:B_more_general} with 
\(M = \mathbb{P}^n\), \(n \geq 3\), since in this case condition 
\eqref{4:T:B} of Theorem~\ref{T:B} is equivalent to the fact that 
\(\G\) locally divides \(\D\). 

Indeed, if condition \eqref{4:T:B} of Theorem~\ref{T:B} holds, then 
in particular we have 
\((\TD)_p = (\TG)_p \oplus (\Th)_p\) for every \(p \in M\). 
By Lemma~\ref{R:dim_2_loc_free}, it follows that \(\G\) locally 
divides \(\D\) at every \(p \in M\); hence \(\G\) locally divides \(\D\). 

Conversely, if \(\G\) locally divides \(\D\), then condition 
\eqref{4:T:B} of Theorem~\ref{T:B} follows from 
Corollary~\ref{C:divides}.

For the particular case, if $\mathcal{W}$ is an irreducible component of dimension $n-2$ of $\sing(\G)$ and $\mathcal{T}$ is an irreducible component of dimension $\geq 2$ of $\sing(\D)$, as they are in particular projective varieties, from the respective dimensions we see that $\mathcal{W} \cap \mathcal{T} \neq \emptyset$, then the result follows.
\end{proof}

In Lemma~\ref{T:B_more_general}, the assumption that the tangent sheaf is locally free can be relaxed, at the cost of a corresponding weakening of the conclusion.

\begin{lemma}\label{T:B_more_general_without_loc_free}
Let $\D$ be a two-dimensional distribution on a complex manifold $M$ of dimension $n \geq 3$, and $\G$ a one-dimensional foliation on $M$, tangent to $\D$ and with linear rank at least $2$. Then the following are equivalent:
\begin{enumerate}
\item\label{1:T:B_more_general_without_loc_free} $\sing(\G) \subset \sing(\D)$.
\item\label{2:T:B_more_general_without_loc_free} $\G$ locally divides $\D$.
\end{enumerate}
\end{lemma}
\begin{proof}
It is clear that \eqref{2:T:B_more_general_without_loc_free} implies 
\eqref{1:T:B_more_general_without_loc_free}. Assume that 
\eqref{1:T:B_more_general_without_loc_free} holds, and let 
\(\mathcal{V}\) be an arbitrary irreducible component of 
\(\sing(\G)\) of dimension \(n-2\). 

Note that \(\sing(\TD)\) has codimension at least \(3\), because 
\(\TD\) is reflexive (see Remark~\ref{R:tang_sheaf_properties}), 
and the singular set of a coherent reflexive sheaf has codimension 
at least \(3\); see, for example, 
\cite[Lemma~1.1.10]{OkSchSpi}. 

Thus, we can choose a point 
\(p \in \mathcal{V} \subset \sing(\D)\) such that 
\(\TD\) is locally free at \(p\). It then follows from 
part~\eqref{one:C:not_free} of Corollary~\ref{C:not_free} 
that \(\G\) locally divides \(\D\) at \(p\). 
By Corollary~\ref{C:check_one-point}, we conclude that 
\(\G\) locally divides \(\D\).
\end{proof}

Using the same argument as in the proof of Theorem~\ref{T:B}, Lemma~\ref{T:B_more_general_without_loc_free} yields the following:

\begin{cor}\label{C:T:B_without_loc_free}
Let $\D$ be a two-dimensional distribution on $\Pj^n$, $n \geq 3$, and $\G$ a one-dimensional foliation on $\Pj^n$, tangent to $\D$ and with linear rank at least $2$. Then the following are equivalent:
\begin{enumerate}
\item\label{1:C:T:B_without_loc_free} $\sing(\G) \subset \sing(\D)$.
\item\label{2:C:T:B_without_loc_free} There is a one-dimensional foliation $\h$ on $\mathbb{P}^n$ such that $\TD=\TG \oplus \Th$.
\end{enumerate}
\end{cor}

Corollary~\ref{C:T:B_without_loc_free} has the following interpretation: Let $\D$ be a two-dimensional distribution and $\G$ a one-dimensional foliation tangent to $\D$, both defined on $\mathbb{P}^n$, $n \geq 3$. For there to exist another one-dimensional foliation $\mathcal{H}$ on $\mathbb{P}^n$ tangent to $\D$ such that $\TD=\TG \oplus \Th$, it is clearly necessary that $\sing(\G) \subset \sing(\D)$. Corollary~\ref{C:T:B_without_loc_free} tells us that this condition is also sufficient when the linear rank of $\G$ is at least $2$.

Let $\mathcal{D}$ be a two-dimensional distribution on $\mathbb{P}^n$, $n \geq 3$, with locally free tangent sheaf. Assume that there exists a one-dimensional foliation $\mathcal{G}$ on $\mathbb{P}^n$ tangent to $\mathcal{D}$ and with linear rank at least $3$. For every point $p \in \sing(\G)$, we must have $p \in \sing(\D)$; otherwise, we would be in situation \eqref{2:T:A} of Theorem~\ref{T:A}, where the linear rank of $\G$ at $p$ equals $2$, contradicting the assumption that it is at least $3$. By applying part~\eqref{one:C:not_free} of Corollary~\ref{C:not_free}, we deduce that $\G$ locally divides $\D$ at $p$. Since $p$ is arbitrary in $\sing(\G)$, it follows that $\G$ locally divides $\D$. We obtain from Corollary~\ref{C:divides} that $\TD$ splits. In fact, the hypothesis that $\TD$ is locally free can be dropped, according to a particular case of the following:

\begin{thm}\label{T:version_lr_general}
Let $\D$ be a $k$-dimensional distribution (not necessarily with locally free
tangent sheaf) on a complex manifold $M$ of dimension $n \geq 3$, and $\G$ a one-dimensional foliation on $M$ tangent to $\D$, where $1\leq k< n$. If $\G$ has linear rank at least $k+1$ then $\G$ locally divides $\D$. In particular, for $k=2$ and $M=\Pj^n$ we have that $\TD$ splits.
\end{thm}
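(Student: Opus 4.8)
The plan is to follow the proof of Theorem~\ref{T:version_lr_3} almost verbatim, replacing the particular case of the dual De Rham--Saito division (Corollary~\ref{C:dual_division}) by its general form. First I would invoke Lemma~\ref{L:lin_rank_3}: since the linear rank of $\G$ is at least $k+1$, at every point $q \in \sing(\G)$ a local vector field $v$ defining $\G$ satisfies $\rank(Dv(0)) \geq k+1$ in a centered chart, whence $\codim(\sing(v)) \geq \rank(Dv(0)) \geq k+1$. Consequently $\codim(\sing(\G)) \geq k+1$.

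Next I would show that $\G$ locally divides $\D$. Fix $p \in M$ and a holomorphic chart identifying a neighborhood of $p$ with a polydisc $U \subset \C^n$; let $X \in \mathcal{X}(U)$ define $\G$ and $\omega \in \Omega^{n-k}(U)$ define $\D$. Since $\G$ is tangent to $\D$ we have $i_X(\omega)=0$. Because the germ of $\sing(X)$ at $p$ has codimension at least $k+1$ (it is empty when $X(p) \neq 0$, and otherwise $p \in \sing(\G)$ so the bound above applies), Corollary~\ref{C:dual_division} applied with its nonnegative integer taken to be $k$ guarantees that $X$ has the division property by $q$-forms for $n-k \leq q \leq n-1$, in particular for $q = n-k$. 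Hence there exists $\beta \in \Omega^{n-k+1}(U)$ with $\omega = i_X(\beta)$, that is, $X$ divides $\omega$ in the sense of Definition~\ref{D:divides}. As $p \in M$ was arbitrary, $\G$ locally divides $\D$.

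For the splitting statement, assume $k=2$ and $M = \Pj^n$. Having established that $\G$ locally divides the two-dimensional distribution $\D$, I would apply Proposition~\ref{P:divides}, which yields a one-dimensional foliation $\h$ on $\Pj^n$ tangent to $\D$ with $\TD = \TG \oplus \Th$; in particular $\TD$ splits. Note that Proposition~\ref{P:divides} is available only for two-dimensional distributions, which is precisely why the splitting conclusion is restricted to $k=2$, whereas the local division conclusion holds for all $k$.

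The sole delicate point is the index bookkeeping in Corollary~\ref{C:dual_division}: one must verify that the hypothesis $\codim(\sing(X)) \geq k+1$ places the form degree $n-k$ (the degree of the local defining forms of a $k$-dimensional distribution) inside the admissible range, which it does since taking the corollary's integer equal to $k$ delivers division for every $q$ with $n-k \leq q \leq n-1$. Apart from this indexing check, the argument is a direct transcription of the $k=2$ case and presents no genuine obstacle.
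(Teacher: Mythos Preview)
Your proof is correct and follows essentially the same approach as the paper: both apply Lemma~\ref{L:lin_rank_3} to obtain $\codim(\sing(\G)) \geq k+1$, then invoke Corollary~\ref{C:dual_division} to conclude that $\G$ locally divides $\D$, with the splitting for $k=2$ coming from Proposition~\ref{P:divides} (the paper routes this through Corollary~\ref{C:cod_3_divides}, which is just that proposition combined with the division lemma). Your write-up is more explicit about the index bookkeeping and the pointwise application of the lemma, but the argument is the same.
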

\begin{proof}
It follows from Lemma~\ref{L:lin_rank_3} below that $\codim(\sing(\G)) \geq k+1$. By Corollary~\ref{C:dual_division}, any vector field defining $\mathcal{G}$ on an open neighborhood of each point $p \in M$, and having no zeros of codimension one, satisfies the division property by ($n-k$)-forms. The conclusion then follows, since $\mathcal{D}$ is defined on an open neighborhood of every point $p \in M$ by an $(n-k)$-form. The particular case follows by applying Corollary~\ref{C:divides}.
\end{proof}

\begin{lemma}\label{L:lin_rank_3}
For a germ of a vector field $v \in \mathcal{X}(\C^n,0)$, we have $\codim(\sing(v)) \geq \rank(Dv(0))$. In particular, if $\G$ is a one-dimensional foliation with linear rank at least $k$ on a complex manifold $M$, then $\codim(\sing(\G)) \geq k$.
\end{lemma}
\begin{proof}
If $v(0) \neq 0$, we have that $\codim(\sing(v))=n+1$ and the result follows, since $\rank(Dv(0))$ is at most $n$. If $v(0)=0$, we prove the following: Let $\mathcal{V} \neq \emptyset$ be a germ of an analytic set defined at $0 \in \mathbb{C}^n$. Assume that $\mathcal{V}$ is represented by
\[
\{z \in U:f_1(z)=\cdots=f_m(z)=0\},
\]
where $f_1,\ldots,f_m$ are holomorphic functions defined on an open neighborhood $U$ of the origin in $\mathbb{C}^n$. For the $ m \times n$ matrix
\[
M(z)=
\biggl[
\frac{\partial f_i}{\partial z_j}(z)
\biggr]_{i=1,\ldots,m,\;j=1,\ldots,n}.
\]
we will prove that $\codim(\mathcal{V}) \geq \rank(M(0))$. Then we conclude by setting $\mathcal{V}=\sing(v)$, where
\[
v(z)=f_1(z)\frac{\partial}{\partial z_1}+\cdots+f_n(z)\frac{\partial}{\partial z_n},
\]
since in this case $M(0)=Dv(0)$.

Denote by $\mathcal{I}_p \subset \mathcal{O}_{\mathbb{C}^n,p}$ the ideal of $\mathcal{V}$ at a point $p \in U \cap \mathcal{V}$. If the open set $U$ is sufficiently small, we can assume that there are finitely many holomorphic functions $g_1,\ldots,g_r$ defined on $U$, such that for each point $p \in U \cap \mathcal{V}$ the germs of $g_1,\ldots,g_r$ at $p$ generate $\mathcal{I}_p$. Let $\mathcal{V}_1$ be an irreducible component of $\mathcal{V}$ for which $\dim(\mathcal{V})=\dim(\mathcal{V}_1)$. If $p$ is a regular point of $\mathcal{V}_1$, we have
\[
\codim(\mathcal{V})=\codim(\mathcal{V}_1)=\rank(J(p)),
\]
where  
where
\[
J(z)=
\biggl[
\frac{\partial g_i}{\partial z_j}(z)
\biggr]_{i=1,\ldots,r,\;j=1,\ldots,n}.
\]

Since the germs of $g_1,\ldots,g_r$ at $p$ generate $\mathcal{I}_p$, there exist germs of holomorphic functions $h_{ij},i=1,\ldots,m,j=1,\ldots,r$, defined at $p$, such that
\begin{equation}\label{E:germs}
f_{i}=h_{i1}g_1+\cdots+h_{ir} g_r,i=1,\ldots,m.
\end{equation}

Taking derivative at both sides of \eqref{E:germs} with respect to $z_j,j=1,\ldots,n$, since $g_1(p)=\cdots=g_r(p)=0$, we have
\[
\frac{\partial f_i}{\partial z_j}(p)=h_{i1}(p)\frac{\partial g_1}{\partial z_j}(p)+\cdots+h_{ir}(p) \frac{\partial g_r}{\partial z_j}(p),i=1,\ldots,m,j=1,\ldots,n.
\]

The last set of equations tells us that the vector rows of the matrix $M(p)$ are in the row space of $J(p)$. Thus
\[
\codim(\mathcal{V})=\rank(J(p)) \ge \rank(M(p)).
\]

Finally, since the map $z \mapsto \rank(M(z))$ is lower semi-continuous, if $p$ is sufficiently close to $0$ it follows that $\rank(M(p)) \geq \rank(M(0))$, which concludes the proof.
\end{proof}

\begin{remark}
Unlike the particular case of Theorem~\ref{T:version_lr_general}, we cannot remove the hypothesis that $\TD$ is locally free from Theorem~\ref{T:B}. See, for example, \cite[Proposition~1]{Lizarbe17}.
\end{remark}

Finally, we obtain the following division criteria for vector fields and differential forms.

\begin{prop}\label{P:new_division_result}
Let $X \in \mathcal{X}(\mathbb{C}^n,0)$ be a germ of a vector field such that $\rank (DX(0)) \geq p+1$, where $p$ is a positive integer. Then $X$ satisfies the division property by $k$-forms for $n-p \leq k \leq n-1$. 
\end{prop}
\begin{proof}
By Lemma~\ref{L:lin_rank_3} we have that
\[\codim(\sing(X)) \geq \rank (DX(0)) \geq p+1.
\]
The result follows from Corollary~\ref{C:dual_division}.
\end{proof}

For $\omega \in  \Omega^1(\mathbb{C}^n,0)$, write $\omega=\sum_{i=1}^n A_i(z_1,\ldots,z_n)dz_i$. Define $X_\omega \in \mathcal{X}(\mathbb{C}^n,0)$ by
\[
X_\omega=\sum_{i=1}^n A_i(z_1,\ldots,z_n) \frac{\partial }{\partial z_i}.
\]

\begin{dfn}
Given $\omega \in  \Omega^1(\mathbb{C}^n,0)$, we define the rank of the linear part of $\omega$ as the rank of $D X_\omega(0)$.
\end{dfn}

We have defined $X_\omega$ as above for the sake of convenience, but note that the rank of the linear part of $\omega$ coincides with the rank of the linear part of $\hat{\omega}_1$, where $\omega=\sum_{i=0}^\infty \hat{\omega}_i$.

\begin{prop}\label{P:new_div_dual}
Let $\omega \in \Omega^1(\mathbb{C}^n,0)$ be a germ of a $1$-form such that the rank of the linear part of $\omega$ is at least $p+1$, where $p$ is a nonnegative integer. Then $\omega$ satisfies the division property by $k$-forms for $k \leq p$.
\end{prop}
\begin{proof}
By our assumption we have that $\rank(DX_\omega(0)) \geq p+1$. Since $\sing(\omega)=\sing(X_\omega)$, by Lemma~\ref{L:lin_rank_3} it follows that
\[\codim(\sing(\omega))=\codim(\sing(X_\omega)) \geq \rank (DX_\omega(0)) \geq p+1.
\]
The result follows from Lemma~\ref{L:De-S}.
\end{proof}

Lemma~\ref{L:De-S} admits the following converse: If $\omega \in \Omega^1(\C^n,0)$ satisfies the division property by $k$-forms for $k \leq p$, then it follows that $\codim( \sing (\omega)) \geq p+1$, see \cite[Theorem~17.4]{eis}. By duality, given a positive integer $p$, if $X  \in \mathcal{X}(\mathbb{C}^n,0)$ satisfies the division property by $k$-forms, for $n-p \leq k \leq n-1$, then $\codim(\sing(X)) \geq p+1$. So it is no surprise that Corollary~\ref{C:dual_division} is used in the proof of Proposition~\ref{P:new_division_result}, as well as Lemma~\ref{L:De-S} is used in the proof of Proposition~\ref{P:new_div_dual}. It may be easier to check Proposition~\ref{P:new_division_result} instead of Corollary~\ref{C:dual_division} in some situations, as well as Proposition~\ref{P:new_div_dual} instead of Lemma~\ref{L:De-S}.

\begin{example}
For \(n \geq 3\), let 
\[
  X = \sum_{i=1}^n A_i \frac{\partial}{\partial z_i} 
  \in \mathcal{X}(\C^n,0)
\]
and let \(\eta \in \Omega^{k}(\C^n,0)\) be such that \(i_X\eta = 0\).  
Set 
\[
  \omega = \sum_{i=1}^n A_i dz_i \in \Omega^1(\C^n,0)
\]
and let \(\alpha \in \Omega^{l}(\C^n,0)\) be such that 
\(\omega \wedge \alpha = 0\).

If \(X(0) \neq 0\), then \(\omega(0) \neq 0\), and it is clear that 
\(X\) divides \(\eta\) and \(\omega\) divides \(\alpha\). 
From now on, assume that \(X(0)=0\) and that the linear part of \(X\) 
is of the form
\[
  \hat{X}_1 = 
  z_1\frac{\partial}{\partial z_1} +
  z_2\frac{\partial}{\partial z_2} +
  \lambda z_3\frac{\partial}{\partial z_3} +
  \sum_{i=4}^n H_i \frac{\partial}{\partial z_i},
\]
where \(\lambda \in \C\) and \(H_4,\ldots,H_n\) are homogeneous 
polynomials of degree \(1\) in the variables 
\(z_1,\ldots,z_n\). 

If \(\lambda \neq 0\), then the rank of \(DX(0)\) is at least \(3\), 
and it follows from Proposition~\ref{P:new_division_result} 
that \(X\) divides \(\omega\) for \(k=n-2\) or \(k=n-1\). 
In this case, the rank of the linear part of \(\omega\) is also at 
least \(3\), and it follows from Proposition~\ref{P:new_div_dual} 
that \(\omega\) divides \(\alpha\) for \(l \leq 2\). 

On the other hand, if \(\lambda = 0\), we see that the rank of 
\(DX(0)\) is at least \(2\). 
If \(\eta(0) \neq 0\), then clearly \(X\) does not divide \(\omega\), 
because \(X(0)=0\). 
If \(\eta(0)=0\), in the particular case where \(k=n-2\) and \(\eta\) satisfies $\codim(\sing(\eta)) \geq 2$ and defines a germ of a two-dimensional distribution with locally free 
tangent sheaf, it follows from 
Corollary~\ref{C:crite_div_loc_free} that \(X\) divides \(\eta\).
\end{example}

\section{Proof of Theorem~\ref{T:structure}}

Let $v$ be a holomorphic vector field on $\mathbb P^3$. Recall that it can be represented by a class of homogeneous linear polynomial vector field on $\mathbb C^4$
\[
v=\sum_{i=0}^{3} v_i(x_0,\ldots,x_3) \frac{\partial}{\partial x_i}
\]
modulo 
\[
R=\sum_{i=0}^3 x_i \frac{\partial}{\partial x_i},
\]
the radial vector field.

Let $S$ be the $4 \times 4$ matrix with complex entries such that 
\begin{equation}\label{E:jordan_vector}
v=S \cdot \mathrm{X},
\end{equation}
where $\mathrm{X}=[x_0 \cdots x_3]^T$ is a column vector. Up to a linear automorphism of $\Pj^3$, we can assume that $S$ is in Jordan normal form. In particular, if $v \neq 0$ is nilpotent, it can be represented by
\begin{equation}\label{E:nil_P3}
v_1 = x_1\frac{\partial}{\partial x_0} + x_3\frac{\partial}{\partial x_2}
\quad \text{or} \quad
v_2 = x_1\frac{\partial}{\partial x_0} + x_2\frac{\partial}{\partial x_1}.
\end{equation}

Let $\eta = \sum_{j=0}^{\infty} \hat{\eta}_j \in \Omega^1(\mathbb{C}^3,0)$, with $\codim(\sing(\eta)) \geq 2$, defining a germ of a codimension one distribution $\D$ on $(\mathbb{C}^3,0)$. The algebraic multiplicity of $\eta$ (or $\D$) is defined as the smallest integer $k$ such that $\hat{\eta}_k \neq 0$.

The following proposition will be used in the proof of Lemma~\ref{L:divides_in_one_point} below, which in turn will be used in the proof of Theorem~\ref{T:structure}.

\begin{prop}\label{L:alg_mult_at_least_2}
Let $\F$ be a codimension one foliation on $\Pj^3$, of degree $\deg(\F) \geq 2$. Let $v \neq 0$ be a nilpotent vector field on $\Pj^3$ represented by $v_1$ or $v_2$ as in \eqref{E:nil_P3}. If $v$ is tangent to $\F$, then $p=(1:0:0:0)$ is a singular point of $\F$ with algebraic multiplicity at least $2$.
\end{prop}

\begin{lemma}\label{L:divides_in_one_point}
Let $\F$ be a codimension one foliation on $\Pj^3$, with locally free tangent sheaf, of degree $\deg(\F) \geq 2$. Let $v \neq 0$ be a nilpotent vector field on $\Pj^3$ represented by $v_1$ or $v_2$ as in \eqref{E:nil_P3}. If $v$ is tangent to $\F$, then $v$ locally divides $\F$ at $p=(1:0:0:0)$.
\end{lemma}

It is convenient for the proof of Proposition~\ref{L:alg_mult_at_least_2} to use tools related to directed graphs. We briefly recall their definition and introduce some related objects, which will be useful here and are not necessarily standard in the literature.

A directed graph (or digraph) is an ordered pair $G=(V,A)$ where: 
\begin{itemize}
\item $V$ (or $V(G)$) is a set whose elements are called \emph{vertices}.
\item $A$ (or $A(G)$) is a set of ordered pairs of vertices, called \emph{arcs}, with
\[
A \subset \{(u,w)\in V\times V \mid u \neq w\}.
\]
\end{itemize}

In the definition above, we do not allow the existence of \emph{loops} in $A$, that is, arcs of the form $e = (u,u)$ with $u \in V$. We emphasize that this is not universally adopted, and our choice is made for convenience.

In the representation of a directed graph $G$, we represent an arc $e = (u,w)$ by an arrow starting at $u$ and pointing to $w$. Given $u \in V$, define
\[
V_G^{+}(u)=\{w \in V : (u,w) \in A\}, \qquad 
V_G^{-}(u)=\{w \in V : (w,u) \in A\}.
\]

We denote by $\deg_G^{+}(u)$ and $\deg_G^{-}(u)$ the cardinalities of $V_G^{+}(u)$ and $V_G^{-}(u)$, respectively. A vertex $u \in V$ with $\deg_G^{-}(u)=0$ is called a \emph{source}. Similarly, a vertex with $\deg_G^{+}(u)=0$ is called a \emph{sink}. We define the index of a source (resp. a sink) $u$ as $\deg_G^{+}(u)$ (resp. $\deg_G^{-}(u)$).

A directed graph in which every vertex is either a source or a sink will be called a \emph{source--sink digraph} (abbreviated as \emph{ss-digraph}). An ss-digraph is a particular case of a type of directed graph known as \emph{bipartite}.

We take this opportunity to introduce, in the next example, two directed graphs that will be important in the proof of Proposition~\ref{L:alg_mult_at_least_2}.

\begin{example}\label{Ex:intro_two_subgraph}
Let $E_d$ be the $\C$-vector space of homogeneous $1$-forms in the variables $x_0,\ldots,x_3$ of degree $d+1$, where $d \geq 0$. It follows that $E_d$ has as a basis
\[
B_d=\{x^I dx_m:|I|=d+1,m=0,1,2,3\},
\]
where for $I=(i,j,k,l)$ we denote $x^I=x_{0}^i x_{1}^j x_{2}^k x_{3}^{l}$ and $|I|=i+j+k+l$.

Let $v=v_1$ be as in \eqref{E:nil_P3}. Then $L_v:E_d \to E_d$ is a $\C$-linear map, where $L_v \Omega$ denotes the Lie derivative of $\Omega \in E_d$ with respect to $v$.

We have
\begin{equation}\label{eq_lie}
\begin{cases}
L_v(x^Idx_m)&=(ix_0^{i-1}x_{1}^{j+1}x_{2}^kx_{3}^l+kx_0^{i}x_{1}^{j}x_{2}^{k-1}x_{3}^{l+1})dx_m+x^Idx_{m+1}, \\
L_v(x^Idx_m)&=(ix_0^{i-1}x_{1}^{j+1}x_{2}^kx_{3}^l+kx_0^{i}x_{1}^{j}x_{2}^{k-1}x_{3}^{l+1})dx_m,
\end{cases}
\end{equation}
where the first equation of \eqref{eq_lie} holds for $m=0$ and $m=2$, and the second equation holds for $m=1$ and $m=3$. 

Denote by $\hat{G}(d)$ the directed graph whose vertex set is $V=B_d$, and whose arcs are the pairs
\[
e=(x^I dx_m, x^J dx_l)
\]
such that $x^J dx_l$ appears in the expansion of $L_v(x^I dx_m)$ with respect to the basis $B_d$ of $E_d$, that is, the coefficient of $x^J dx_l$ in this expansion is nonzero. Note that the arcs can be determined through relation \eqref{eq_lie}.

On the other hand, if $v=v_2$ is as in \eqref{E:nil_P3}, we have
\begin{equation}\label{eq_lie_vi}
\begin{cases}
L_v(x^Idx_m)&=(ix_0^{i-1}x_{1}^{j+1}x_{2}^kx_{3}^l+jx_0^{i}x_{1}^{j-1}x_{2}^{k+1}x_{3}^{l})dx_m+x^Idx_{m+1}, \\
L_v(x^Idx_m)&=(ix_0^{i-1}x_{1}^{j+1}x_{2}^kx_{3}^l+jx_0^{i}x_{1}^{j-1}x_{2}^{k+1}x_{3}^{l})dx_m,
\end{cases}
\end{equation}
where the first equation of \eqref{eq_lie_vi} holds for $m=0$ and $m=1$, and the second equation holds for $m=2$ and $m=3$. We define $\tilde{G}(d)$ in a similar way to how we defined $\hat{G}(d)$.

With the exception of the final part of the proof of Proposition~\ref{L:alg_mult_at_least_2}, for simplicity we will write $\hat{G}$ and $\tilde{G}$ instead of $\hat{G}(d)$ and $\tilde{G}(d)$, respectively.

Note that \eqref{eq_lie} and \eqref{eq_lie_vi} ensure that $\hat{G}$ and $\tilde{G}$ do not have loops. From \eqref{eq_lie} and for $G=\hat{G}$, it follows that
\begin{equation}\label{E:max_min_vert_degre}
0 \leq \deg_G^{+}(x^I dx_m), \deg_G^{-}(x^I dx_m) \leq 3, \quad \text{for all } x^I dx_m \in B_d.
\end{equation}
From \eqref{eq_lie_vi} and for $G=\tilde{G}$, we have that \eqref{E:max_min_vert_degre} also holds.
\end{example}

A \emph{special path} in a directed graph $G$ of length $k \geq 1$ from $u_0$ to $u_k$ is a finite sequence
\begin{equation}\label{E:special_path}
(u_0,e_1,u_1,e_2,\ldots,e_k,u_k)
\end{equation}
such that
\begin{itemize}
\item $u_0,\ldots,u_k \in V(G)$ and $e_1,\ldots,e_k \in A(G)$.
\item For each $j=1,\ldots,k$, we have $e_j=(u_{j-1},u_{j})$ if $j$ is odd, and $e_j=(u_{j},u_{j-1})$ if $j$ is even.
\item $u_j \neq u_{j+2}$ for all $j=0,\ldots,k-2$.
\end{itemize}

Note that it follows from the definition of a directed graph that $u_j \neq u_{j+1}$ for $j=0,\ldots,k-1$. Sometimes, we represent a special path as in \eqref{E:special_path} by
\[
u_0 \to u_1 \gets u_2 \to u_3 \gets \cdots.
\]

As will become clear shortly, in the context of Example~\ref{Ex:intro_two_subgraph}, and assuming that $\Omega \in E_d$ satisfies $L_v \Omega=0$, the idea is that the vertices $u_j$ with $j$ even in the special path \eqref{E:special_path} encode the terms that must appear in the expansion of $\Omega$ with respect to the basis $B_d$, while those with $j$ odd encode the terms that appear in the expansion of $L_v \Omega$ with respect to the same basis. In this sense, the definition of a special path is motivated by Lemma~\ref{L:cons_two_elements_special} below.

Given $u \in V(G)$, we define the directed graph associated to $u$, denoted by $G_u$, as the (directed) subgraph of $G$ whose vertices and arcs are those belonging to a special path as in \eqref{E:special_path} with $u_0 = u$. Loosely speaking, $G_u$ is the subgraph of $G$ reachable from $u$ by special paths.

\begin{remark}\label{R:constructing_G_u}
When $G$ is finite, that is, $V(G)$ is finite, $G_u$ can be constructed algorithmically by determining the possible vertices $u_1$ such that $u_1 \in V_G^+(u)$, then the possible vertices $u_2 \neq u$ such that $u_2 \in V_G^-(u_1)$ (or equivalently $u_1 \in V_G^+(u_2)$), then the possible vertices $u_3 \neq u_1$ such that $u_3 \in V_G^+(u_2)$, and so on.
\end{remark}

\begin{prop}\label{P:G_w_equals_G_u}
Let $G$ be a directed graph and $u \in V(G)$ such that $G_u$ is an ss-digraph. Then $w \in V(G_u)$ is a source if and only if there exists a special path in $G_u$ of even length from $u$ to $w$. Moreover, if $w$ is a source in $G_u$, then $G_w = G_u$.
\end{prop}
\begin{proof}
Let $w \in V(G_u)$. By definition, there exists a special path in $G$
\[
\gamma_1=(u_0=u,e_1,u_1,e_2,\ldots,e_k,u_k=w)
\]
from $u$ to $w$. By the definition of $G_u$, the path $\gamma_1$ lies in $G_u$. Since $G_u$ is an ss-digraph, it follows that for any such path, $u_j$ is a source in $G_u$ if and only if $j$ is even. This proves the first assertion.

Next, assume that $w$ is a source in $G_u$. Then there exists a special path in $G$
\[
\gamma_2=(u_k=w,e_k,\ldots,e_1,u_0=u)
\]
from $w$ to $u$, obtained by reversing $\gamma_1$. In particular, $\gamma_2$ lies in $G_w$.

Let $s \in V(G_w)$ (resp. $e \in A(G_w)$). By definition, there exists a special path $\gamma_3$ starting at $w$ that contains $s$ (resp. $e$). Concatenating $\gamma_1$ with $\gamma_3$, we obtain a special path starting at $u$ that contains $s$ (resp. $e$). Hence $s \in V(G_u)$ (resp. $e \in A(G_u)$), and therefore $G_w \subset G_u$.

Applying the same argument using $\gamma_2$, we also obtain $G_u \subset G_w$. Hence $G_w = G_u$.
\end{proof}

\begin{prop}\label{P:S_S_positive}
Let $G$ be a directed graph and $u \in V(G)$ such that $V_G^+(u) \neq \emptyset$ and $G_u$ is an ss-digraph. Then every source and every sink in $G_u$ has positive index. Moreover, for any $w \in V(G_u)$, the index of $w$ in $G_u$ equals $\deg_G^+(w)$ if $w$ is a source in $G_u$, and equals $\deg_G^-(w)$ if $w$ is a sink in $G_u$.
\end{prop}
\begin{proof}
The condition $V_G^+(u) \neq \emptyset$ guarantees that $G_u$ is not the null graph, that is,
\[
V(G_u)\neq \emptyset.
\]
By definition, every vertex in $G_u$ belongs to a special path $\gamma$ in $G$ starting at $u$. Hence, $\gamma$ is itself a special path in $G_u$. It follows that every source and every sink in $G_u$ has positive index.

Suppose that $w \in V(G_u)$ is a source. Since $G_u$ is a subgraph of $G$, we have
\[
V_{G_u}^+(w) \subset V_G^+(w).
\]
We prove the reverse inclusion, thereby obtaining $V_{G_u}^+(w)=V_G^+(w)$, which suffices to conclude that the index of $w$ in $G_u$ equals $\deg_G^+(w)$.

Indeed, by the previous proposition, there exists a special path of even length $k$ in $G_u$ from $u$ to $w$:
\[
u_0=u \to u_1 \gets \cdots \gets u_k=w.
\]
In particular, $u_{k-1} \in V_{G_u}^+(w)$. Let $s \in V_G^+(w) \setminus \{u_{k-1}\}$. Then we obtain the following special path in $G$:
\[
u_0=u \to u_1 \gets \cdots \gets u_k=w \to s,
\]
so $s \in V_{G_u}^+(w)$. This shows that $V_G^+(w) \subset V_{G_u}^+(w)$.

Similarly, if $w$ is a sink in $G_u$, by considering a special path of odd length from $u$ to $w$, one shows that
\[
V_G^-(w)=V_{G_u}^-(w).
\]
Consequently, the index of $w$ in $G_u$ equals $\deg_G^-(w)$ in this case. This completes the proof.
\end{proof}

Two directed graphs $G=(V,A)$ and $G'=(V',A')$ are said to be \emph{isomorphic} if there exists a bijection
\[
\varphi : V \to V'
\]
such that, for all $u,v \in V$,
\[
(u,v) \in A \;\Longleftrightarrow\; (\varphi(u),\varphi(v)) \in A'.
\]
In this case, the map $\varphi$ is called an \emph{isomorphism} from $G$ to $G'$.

Given $u \in V(G)$ and $w \in V(G')$, we write
\[
(G_u,u) \cong (G'_w,w)
\]
if there exists an isomorphism
\[
\varphi : V(G_u) \to V(G'_w)
\]
mapping $u$ to $w$.

\begin{example}\label{Ex:ex_special_path_G_alpha}
Let $\hat{G}$ and $\tilde{G}$ be as in Example~\ref{Ex:intro_two_subgraph}. Assume that $d \geq 1$. In Figure~\ref{F:Figura_1}, we present a special path in $\hat{G}$ starting at $x_0^{d+1} dx_3 \in V(\hat{G})$. 

\begin{figure}[h]
\centering
\begin{tikzpicture}[
    >=stealth,
    every arrow/.style={->, thin, shorten >=0.5pt, shorten <=0.5pt},
]
\matrix (m) [matrix of nodes,
             row sep=0.5cm,
             column sep=0.5cm] 
{
|(A)| {$x_0^{d+1} dx_3$} &|(B)| {$x_0^{d} x_1  dx_3$} &|(C)| {$x_0^{d} x_1 dx_2$} & |(D)| {$x_0^{d-1} x_1^2 dx_2$}\\
};

\draw[every arrow] (A) -- (B);
\draw[every arrow] (C) -- (B);
\draw[every arrow] (C) -- (D);
\end{tikzpicture}
\caption{Representation of $\hat{G}_{x_0^{d+1} dx_3}=\hat{G}_{x_0^{d} x_1 dx_2}$}
\label{F:Figura_1}
\end{figure}

Moreover, one verifies that
\begin{align*}
\deg_{\hat{G}}^+(x_0^{d+1} dx_3) &= 1, & \deg_{\hat{G}}^-(x_0^{d} x_1 dx_3) &= 2, \\
\deg_{\hat{G}}^+(x_0^{d} x_1 dx_2) &= 2, & \deg_{\hat{G}}^-(x_0^{d-1} x_1^2 dx_2) &= 1,
\end{align*}
which shows that Figure~\ref{F:Figura_1} represents not only a special path starting at $x_0^{d+1} dx_3$, but in fact the entire directed graph $\hat{G}_{x_0^{d+1} dx_3}$. Note that $\hat{G}_{x_0^{d+1} dx_3}$ is an ss-digraph. Since $x_0^{d} x_1 dx_2$ is a source in $\hat{G}_{x_0^{d+1} dx_3}$, Proposition~\ref{P:G_w_equals_G_u} yields
\[
\hat{G}_{x_0^{d+1} dx_3}=\hat{G}_{x_0^{d} x_1 dx_2}.
\]

In Figure~\ref{F:Figura_2}, we display $\tilde{G}_{x_0^d x_2 dx_3}$, with $d \geq 2$.

\begin{figure}[h]
\centering
\begin{tikzpicture}[
    >=stealth,
    every arrow/.style={->, thin, shorten >=0.5pt, shorten <=0.5pt},
]
\matrix (m) [matrix of nodes,
             row sep=0.5cm,
             column sep=0.5cm] 
{
|(A)| {$x_0^{d} x_2 dx_3$} &|(B)| {$x_0^{d-1} x_1 x_2 dx_3$} &|(C)| {$x_0^{d-1} x_1^2 dx_3$} & |(D)| {$x_0^{d-2} x_1^3 dx_3$}\\
};

\draw[every arrow] (A) -- (B);
\draw[every arrow] (C) -- (B);
\draw[every arrow] (C) -- (D);
\end{tikzpicture}
\caption{Representation of $\tilde{G}_{x_0^{d} x_2 dx_3}$}
\label{F:Figura_2}
\end{figure}

Comparing Figures~\ref{F:Figura_1} and \ref{F:Figura_2}, if $d \geq 2$, it is clear that there exists an isomorphism
\[
\varphi : V(\hat{G}_{x_0^{d+1} dx_3}) \to V(\tilde{G}_{x_0^{d} x_2 dx_3})
\]
such that $\varphi(x_0^{d+1} dx_3)=x_0^{d} x_2 dx_3$, that is,
\[
(\hat{G}_{x_0^{d+1} dx_3},x_0^{d+1} dx_3) \cong (\tilde{G}_{x_0^{d} x_2 dx_3},x_0^{d} x_2 dx_3).
\]
\end{example}

Next, we define certain objects that will be particularly useful only in the case of ss-digraphs. However, in order for the definitions to appear less artificial, we proceed in a general setting and then make some remarks for the case of ss-digraphs.

Given a directed graph $G$, a finite subset $\{w_0,\ldots,w_{k-1}\} \subset V(G)$ is called a \emph{cycle} of length $k$ in $G$ if there exists a special path as in \eqref{E:special_path} with $u_0 = u_k$, such that the vertices $u_0,\ldots,u_{k-1}$ are pairwise distinct and
\[
\{w_0,\ldots,w_{k-1}\} = \{u_0,\ldots,u_{k-1}\}.
\]
Note that, by the definition of a special path, this requires $k \geq 3$.

We say that a special path as in \eqref{E:special_path} \emph{contains a cycle} if there exist indices $0 \leq i < j \leq k$ such that
\[
\{u_i,u_{i+1},\ldots,u_{j-1}\}
\]
is a cycle.

A special path in $G$ as in \eqref{E:special_path} is said to be:
\begin{itemize}
\item \emph{cyclic} if there exists $0 \leq j < k$ such that $u_k = u_j$, and the vertices $u_0,\ldots,u_{k-1}$ are pairwise distinct. In this case, the path contains the cycle $\{u_j,u_{j+1},\ldots,u_{k-1}\}$.

\item \emph{simple} if the vertices $u_0,\ldots,u_k$ are pairwise distinct.

\item \emph{source-extremal} if it is simple and $u_k$ is a source of index $1$. In this case, $k$ must be even.

\item \emph{regular} if $u_j$ is a source of index $2$ for every even $j$ with $0<j<k$, and $u_j$ is a sink of index~$2$ for every odd $j$ with $0<j<k$.

\item \emph{terminal} if it is regular and $u_k$ is a sink of index $1$. In this case, $k$ must be odd.
\end{itemize}

Note that every terminal path is simple. A vertex $u \in V(G)$ is said to be \emph{terminal} if there exists a terminal path starting at $u$. A vertex $u \in V(G)$ is said to be \emph{quasi-terminal} if there exists a regular path starting at $u$ containing a terminal vertex. In particular, every terminal vertex is quasi-terminal. A special path is said to be \emph{admissible} if it is either cyclic or source-extremal. Since every admissible path starting at $u$ is a special path, it appears in $G_u$.

\begin{example}\label{Ex:justifies_ex_article}
In Figure~\ref{F:Figura_3}, we display $\hat{G}_{x_0^{d} x_3 dx_1}$, with $d \geq 3$.

\begin{figure}[h]
\centering
\begin{tikzpicture}[
    >=stealth,
    every arrow/.style={->, thin, shorten >=0.5pt, shorten <=0.5pt},
]
\matrix (m) [matrix of nodes,
             row sep=0.5cm,
             column sep=0.5cm] 
{
& |(A)| {$x_0^d x_3dx_1$} & \\
& |(B)| {$x_0^{d-1} x_1 x_3dx_1$} & \\
|(C)| {$x_0^{d-1} x_1 x_3dx_0$} 
& & 
|(D)| {$x_0^{d-1} x_1 x_2dx_1$} \\
|(E)| {$x_0^{d-2} x_1^2 x_3dx_0$} 
& & 
|(F)| {$x_0^{d-2} x_1^2 x_2dx_1$} \\
& |(G)| {$x_0^{d-2} x_1^2 x_2dx_0$} & \\
& |(H)| {$x_0^{d-3} x_1^3 x_2dx_0$} & \\
};

\draw[every arrow] (A) -- (B);
\draw[every arrow] (C) -- (B);
\draw[every arrow] (D) -- (B);
\draw[every arrow] (C) -- (E);
\draw[every arrow] (D) -- (F);
\draw[every arrow] (G) -- (E);
\draw[every arrow] (G) -- (F);
\draw[every arrow] (G) -- 
node[pos=0.4, left, xshift=-2pt, yshift=1pt, font=\scriptsize] {$(d \geq 3)$} 
(H);
\end{tikzpicture}
\caption{Representation of $\hat{G}_{x_0^{d} x_3 dx_1}$}
\label{F:Figura_3}
\end{figure}

Note that $\hat{G}_{x_0^{d} x_3 dx_1}$ is an ss-digraph, and there is exactly one cycle in $\hat{G}_{x_0^{d} x_3 dx_1}$. The reader can easily verify that there are exactly two cyclic paths in $\hat{G}_{x_0^{d} x_3 dx_1}$ from $x_0^{d} x_3 dx_1$ to $x_0^{d-1} x_1 x_3 dx_1$. Since $x_0^{d} x_3 dx_1$ is the unique source of index $1$ in $\hat{G}_{x_0^{d} x_3 dx_1}$, no source-extremal path in $\hat{G}_{x_0^{d} x_3 dx_1}$ can start at $x_0^{d} x_3 dx_1$. As indicated in Figure~\ref{F:Figura_3}, we have
\[
x_0^{d-3}x_1^3x_2dx_0
\in
V_{\hat{G}_{x_0^{d}x_3dx_1}}^+
(x_0^{d-2}x_1^2x_2dx_0)
\]
only because \(d \geq 3\).

The vertex $x_0^{d-2} x_1^2 x_2 dx_0$ is a terminal source in $\hat{G}_{x_0^{d} x_3 dx_1}$, while $x_0^{d-1} x_1 x_3 dx_0$ and $x_0^{d-1} x_1 x_2 dx_1$ are quasi-terminal.
\end{example}

\begin{remark}\label{R:ss_digraph}
Let $G$ be an ss-digraph.
\begin{enumerate}
\item\label{R:ss_digraph_1} It is easy to see that any cycle in $G$ always has even length. It is well known that this holds more generally for bipartite digraphs. This implies that, in the definition of a cyclic path, the vertices $u_j$ and $u_k$ are either both sources or both sinks. Equivalently, the indices $j$ and $k$ have the same parity.
\item\label{R:ss_digraph_2} Terminal and quasi-terminal vertices are always sources, since by definition there always exist special paths starting at such vertices.
\end{enumerate}
\end{remark}

The next results are stated in the context of Example~\ref{Ex:intro_two_subgraph}. We will assume that $G$ is either $\hat{G}$ or $\tilde{G}$. As in the same example, if $G=\hat{G}$ (resp. $G=\tilde{G}$), we consider $v=v_1$ (resp. $v=v_2$) as in \eqref{E:nil_P3}.

Given elements $\alpha$ and $\beta$ in $B_d$, instead of saying that $\beta$ appears in the expansion of $L_v(\alpha)$ with respect to the basis $B_d$ of $E_d$, we will use the equivalent and more economical notation $\beta \in V_G^+(\alpha)$. For the sake of convenience, given $\Omega \in E_d$, we will abuse notation and denote by $V_G^+(\Omega)$ the set of elements $x^I dx_m \in B_d$ that appear in the expansion of $\Omega$ with respect to $B_d$. 

\begin{lemma}\label{L:cons_two_elements_special}
Let $G$ be either $\hat{G}$ or $\tilde{G}$. Suppose that $\Omega \in E_d$ satisfies $L_v\Omega=0$, $\alpha \in V_G^+(\Omega)$, and $\beta \in V_G^+(\alpha)$. Then there exists $\gamma \in V_G^+(\Omega)$ with $\gamma \neq \alpha$ such that $\beta \in V_G^+(\gamma)$.
\end{lemma}
\begin{proof}
Assume, on the contrary, that for every $\gamma \in V_G^+(\Omega)\setminus\{\alpha\}$ we have $\beta \notin V_G^+(\gamma)$. Let $a \neq 0$ be the coefficient of $\alpha$ in the expansion of $\Omega$ with respect to $B_d$, and write $\Omega = a\alpha + \eta$ for some $\eta \in E_d$. By our hypothesis, $\beta$ does not appear in the expansion of $L_v\eta$ with respect to $B_d$. However, $\beta \in V_G^+(\alpha)$ means that $\beta$ appears in the expansion of $L_v\alpha$ with respect to the same basis. Since $L_v:E_d \to E_d$ is $\mathbb{C}$-linear and $L_v\Omega=0$, we obtain a contradiction.
\end{proof}

\begin{prop}\label{L:dich_ensures_desirable_path}
Let $G$ be either $\hat{G}$ or $\tilde{G}$, and let $\alpha_0 \in B_d$ be such that $G_{\alpha_0}$ is an ss-digraph. Suppose that $\Omega \in E_d$ satisfies $L_v \Omega = 0$, $\alpha_0 \in V_G^+(\Omega)$, and $\alpha_1 \in V_G^+(\alpha_0)$. Then there exists an admissible path
\begin{equation}\label{E:desired_spe_path}
(u_0=\alpha_0,e_1,u_1=\alpha_1,e_2,u_2=\alpha_2,\ldots,e_k,u_k=\alpha_k)
\end{equation}
in $G_{\alpha_0}$ such that $\alpha_j \in V_G^+(\Omega)$ whenever $0 \le j \le k$ is even.
\end{prop}
\begin{proof}
Suppose that no source-extremal path in $G_{\alpha_0}$ satisfies the required condition. We show that there exists a cyclic path satisfying it.

Since $L_v \Omega=0$, $\alpha_0 \in V_G^+(\Omega)$, and $\alpha_1 \in V_G^+(\alpha_0)$, it follows from Lemma~\ref{L:cons_two_elements_special} that there exists $\alpha_2 \in V_G^+(\Omega)$, with $\alpha_2 \neq \alpha_0$, such that $\alpha_1 \in V_G^+(\alpha_2)$. Then
\[
(\alpha_0,e_1,\alpha_1,e_2,\alpha_2)
\]
is a special path in $G_{\alpha_0}$, where $e_1=(\alpha_0,\alpha_1)$ and $e_2=(\alpha_2,\alpha_1)$, and such that $\alpha_j \in V_G^+(\Omega)$ for $j=0,2$.

This path is not cyclic, since $\alpha_0 \neq \alpha_2$. Moreover, by assumption it is not source-extremal. Hence $\alpha_2$ is not a source of index~$1$ in $G_{\alpha_0}$. By Proposition~\ref{P:S_S_positive}, it follows that
\[
\deg_G^{+}(\alpha_2)>1.
\]

Thus, there exists $\alpha_3 \neq \alpha_1$ such that $\alpha_3 \in V_G^+(\alpha_2)$. Since $L_v(\Omega)=0$, $\alpha_2 \in V_G^+(\Omega)$, and $\alpha_3 \in V_G^+(\alpha_2)$, Lemma~\ref{L:cons_two_elements_special} ensures the existence of $\alpha_4 \in V_G^+(\Omega)$ such that
\[
\alpha_3 \in V_G^+(\alpha_4)
\quad\text{and}\quad
\alpha_4 \neq \alpha_2.
\]

Then
\[
(\alpha_0,e_1,\alpha_1,e_2,\alpha_2,e_3,\alpha_3,e_4,\alpha_4)
\]
is a special path in $G_{\alpha_0}$, where $e_3=(\alpha_2,\alpha_3)$ and $e_4=(\alpha_4,\alpha_3)$, and such that $\alpha_j \in V_G^+(\Omega)$ for $j=0,2,4$.

If $\alpha_4=\alpha_0$, then this path is cyclic. Otherwise, since it is not source-extremal, Proposition~\ref{P:S_S_positive} implies that
\[
\deg_G^{+}(\alpha_4)>1.
\]

Continuing inductively, we obtain a special path
\[
(\alpha_0,e_1,\alpha_1,e_2,\alpha_2,e_3,\alpha_3,e_4,\alpha_4,\ldots,e_{2l},\alpha_{2l}),
\]
with $\alpha_j \in V_G^+(\Omega)$ for every even $j \in \{0,1,\ldots,2l\}$.

If this path is not cyclic, then it is also not source-extremal. Hence, by Proposition~\ref{P:S_S_positive},
\[
\deg_G^{+}(\alpha_{2l})>1.
\]

Proceeding as before, we obtain vertices $\alpha_{2l+1} \in V_G^+(\alpha_{2l})$ and $\alpha_{2l+2} \in V_G^+(\Omega)$ such that
\[
\alpha_{2l+1} \in V_G^+(\alpha_{2l+2}),
\]
with $\alpha_{2l+1} \neq \alpha_{2l-1}$ and $\alpha_{2l+2} \neq \alpha_{2l}$.

Since $G_{\alpha_0}$ is finite, and every cycle in an ss-digraph has even length (see Remark~\ref{R:ss_digraph}~\eqref{R:ss_digraph_1}), eventually either
\[
\alpha_{2l+1}=\alpha_{2j+1}
\quad\text{for some } j<l-1,
\]
or
\[
\alpha_{2l+2}=\alpha_{2j}
\quad\text{for some } j<l.
\]

In the first case, we take $k=2l+1$ and $e_{2l+1}=(\alpha_{2l},\alpha_{2l+1})$. In the second case, we take $k=2l+2$, together with
\[
e_{2l+1}=(\alpha_{2l},\alpha_{2l+1})
\quad\text{and}\quad
e_{2l+2}=(\alpha_{2l+2},\alpha_{2l+1}).
\]

In either case, we obtain a cyclic path as in \eqref{E:desired_spe_path} satisfying
\[
\alpha_j \in V_G^+(\Omega)
\]
whenever $0\leq j\leq k$ is even. This completes the proof.
\end{proof}

\begin{cor}\label{C:desi_withou_cycles_index_1_source}
Let $G$ be either $G=\hat{G}$ or $G=\tilde{G}$, and let $\alpha_0 \in B_d$ be such that $V_G^+(\alpha_0) \neq \emptyset$ and $G_{\alpha_0}$ is an ss-digraph. Assume that there are no source-extremal paths in $G_{\alpha_0}$ starting at $\alpha_0$ and that $G_{\alpha_0}$ contains no cycles. Then, for any $\Omega \in E_d$ satisfying $L_v \Omega=0$, $\alpha_0$ does not appear in the decomposition of $\Omega$ with respect to $B_d$.
\end{cor}
\begin{proof}
First observe that the hypotheses ensure that there are no cyclic or source-extremal paths in $G_{\alpha_0}$ starting at $\alpha_0$, and therefore no admissible path in $G_{\alpha_0}$ starting at $\alpha_0$. Let $\alpha_1 \in V_G^+(\alpha_0)$. Assume that there exists $\Omega \in E_d$, with $L_v \Omega=0$, such that $\alpha_0$ appears in the decomposition of $\Omega$ with respect to $B_d$. By Proposition~\ref{L:dich_ensures_desirable_path}, there exists an admissible path in $G_{\alpha_0}$ as in \eqref{E:desired_spe_path}, which is a contradiction.
\end{proof}

\begin{prop}\label{P:origin_ter_or_quasi-terminal}
Let $G$ be either $\hat{G}$ or $\tilde{G}$, and let $\alpha_0 \in B_d$ be such that $G_{\alpha_0}$ is an ss-digraph. If $\alpha_0$ is terminal or quasi-terminal in $G_{\alpha_0}$, then for any $\Omega \in E_d$ satisfying $L_v \Omega=0$, $\alpha_0$ does not appear in the decomposition of $\Omega$ with respect to $B_d$.
\end{prop}
\begin{proof}
Assume, on the contrary, that there exists $\Omega \in E_d$ satisfying $L_v \Omega=0$, with $\alpha_0 \in V_G^+(\Omega)$.

Suppose first that $\alpha_0$ is terminal. By definition, there exists a regular path
\[
\alpha_0 \to \alpha_1 \gets \cdots \gets \alpha_{k-1} \to \alpha_k
\]
starting at $\alpha_0$, where $\alpha_k$ is a sink of index~$1$. Recall that, in particular, $k$ is odd.

Since $\alpha_1 \in V_G^+(\alpha_0)$, Lemma~\ref{L:cons_two_elements_special} yields the existence of $\gamma \in V_G^+(\Omega)$, with $\gamma \neq \alpha_0$, such that $\alpha_1 \in V_G^+(\gamma)$. If $k=1$, we obtain a contradiction, since in this case $\alpha_1$ is a sink of index~$1$.

Assume now that $k>1$. Since $\alpha_1$ is a sink of index~$2$ by the definition of a regular path, we must have $\gamma=\alpha_2$. Hence $\alpha_j \in V_G^+(\Omega)$ for $j=0,2$.

Continuing inductively, we conclude that $\alpha_j \in V_G^+(\Omega)$ for every even $j$ with $0 \leq j < k$. In particular,
\[
\alpha_{k-1} \in V_G^+(\Omega).
\]

Since $\alpha_k \in V_G^+(\alpha_{k-1})$, Lemma~\ref{L:cons_two_elements_special} again yields the existence of $\gamma \in V_G^+(\Omega)$, with $\gamma \neq \alpha_{k-1}$, such that
\[
\alpha_k \in V_G^+(\gamma),
\]
which contradicts the fact that $\alpha_k$ is a sink of index~$1$.

Suppose now that $\alpha_0$ is quasi-terminal. By definition, there exists a regular path
\[
\alpha_0 \to \alpha_1 \gets \alpha_2 \to \cdots
\]
starting at $\alpha_0$ and containing a terminal vertex $\beta$. Since terminal and quasi-terminal vertices are sources by Remark~\ref{R:ss_digraph}~\eqref{R:ss_digraph_2}, it follows from Proposition~\ref{P:G_w_equals_G_u} that $\beta=\alpha_m$ for some even $m$.

Proceeding as in the terminal case, we conclude that $\alpha_j \in V_G^+(\Omega)$ for every even $j$. In particular,
\[
\beta \in V_G^+(\Omega).
\]

By Proposition~\ref{P:G_w_equals_G_u}, we have
\[
G_{\alpha_0}=G_\beta.
\]

In particular, $G_\beta$ is an ss-digraph. Since $\beta \in V_G^+(\Omega)$, $L_v \Omega=0$, and $\beta$ is terminal in $G_\beta$, we obtain a contradiction exactly as in the terminal case. This completes the proof.
\end{proof}

\begin{example}\label{Ex:return_ex_special_path_G_alpha}
Returning to Example~\ref{Ex:ex_special_path_G_alpha}, where we displayed $\hat{G}_{x_0^{d+1} dx_3}$ in Figure~\ref{F:Figura_1}, we observe that there are no source-extremal paths starting at $x_0^{d+1} dx_3$, since $x_0^{d+1} dx_3$ is the unique source of index $1$ in $\hat{G}_{x_0^{d+1} dx_3}$. Moreover, $\hat{G}_{x_0^{d+1} dx_3}$ contains no cycles. Hence, if $d \geq 1$, by Corollary~\ref{C:desi_withou_cycles_index_1_source}, for every $\Omega \in E_d$ satisfying $L_v\Omega=0$, where $v=v_1$ is as in \eqref{E:nil_P3}, $x_0^{d+1} dx_3$ does not appear in the expansion of $\Omega$ with respect to $B_d$.

Note also that $x_0^{d+1} dx_3$ is a terminal vertex in $\hat{G}_{x_0^{d+1} dx_3}$. Therefore, Proposition~\ref{P:origin_ter_or_quasi-terminal} could also be applied to obtain the same conclusion.
\end{example}

\begin{cor}\label{C:cor_desri_ter_quasi}
Let $G$ be either $\hat{G}$ or $\tilde{G}$, and let $\alpha_0 \in B_d$ be such that $G_{\alpha_0}$ is an ss-digraph. Let $\alpha_1 \in V_G^+(\alpha_0)$. If every admissible path as in \eqref{E:special_path} with $u_0=\alpha_0$ and $u_1=\alpha_1$ contains a terminal or quasi-terminal vertex in $G_{\alpha_0}$, then, for any $\Omega \in E_d$ satisfying $L_v \Omega=0$, $\alpha_0$ does not appear in the decomposition of $\Omega$ with respect to $B_d$.
\end{cor}
\begin{proof}
Assume, on the contrary, that there exists $\Omega \in E_d$ satisfying $L_v \Omega=0$, with $\alpha_0 \in V_G^+(\Omega)$. By Proposition~\ref{L:dich_ensures_desirable_path}, there exists an admissible path
\begin{equation}\label{E:cor_desri_ter_quasi}
(\alpha_0,e_1,\alpha_1,e_2,\alpha_2,\ldots,e_k,\alpha_k)
\end{equation}
in $G_{\alpha_0}$ such that $\alpha_j \in V_G^+(\Omega)$ whenever $0\leq j\leq k$ is even.

By hypothesis, this admissible path contains a terminal or quasi-terminal vertex $\beta$ in $G_{\alpha_0}$. Since terminal and quasi-terminal vertices in $G_{\alpha_0}$ are sources by Remark~\ref{R:ss_digraph}~\eqref{R:ss_digraph_2}, Proposition~\ref{P:G_w_equals_G_u} implies that $\beta=\alpha_m$ for some even $m$. In particular,
\[
\beta \in V_G^+(\Omega).
\]

By Proposition~\ref{P:G_w_equals_G_u}, we have
\[
G_{\alpha_0}=G_\beta.
\]

Hence $G_\beta$ is an ss-digraph and $\beta$ is terminal or quasi-terminal in $G_\beta$. Proposition~\ref{P:origin_ter_or_quasi-terminal} then implies that $\beta$ does not appear in the decomposition of $\Omega$ with respect to $B_d$, contradicting the fact that
\[
\beta \in V_G^+(\Omega).
\]
This completes the proof.
\end{proof}

\begin{cor}\label{C:no_source_1_cycles_ter_quasi-ter}
Let $G$ be either $\hat{G}$ or $\tilde{G}$, and let $\alpha_0 \in B_d$ be such that $V_G^+(\alpha_0) \neq \emptyset$ and $G_{\alpha_0}$ is an ss-digraph. Assume that there are no source-extremal paths in $G_{\alpha_0}$ starting at $\alpha_0$ and every cycle in $G_{\alpha_0}$ has a terminal or quasi-terminal vertex in $G_{\alpha_0}$. Then, for any $\Omega \in E_d$ such that $L_v \Omega=0$, $\alpha_0$ does not appear in the decomposition of $\Omega$ with respect to $B_d$.
\end{cor}
\begin{proof}
Let $\alpha_1 \in V_G^+(\alpha_0)$. By hypothesis, every admissible path as in \eqref{E:special_path} with $u_0=\alpha_0$ and $u_1=\alpha_1$ cannot be source-extremal. Therefore, every such admissible path is cyclic.

Hence, every such path contains a cycle, and therefore, by hypothesis, contains a terminal or quasi-terminal vertex in $G_{\alpha_0}$. The conclusion now follows from Corollary~\ref{C:cor_desri_ter_quasi}.
\end{proof}

\begin{example}\label{Ex:sec_example_table}
Returning to Example~\ref{Ex:justifies_ex_article}, where we displayed $\hat{G}_{x_0^{d} x_3 dx_1}$ in Figure~\ref{F:Figura_3}, we saw that there are no source-extremal paths in $\hat{G}_{x_0^{d} x_3 dx_1}$ starting at $x_0^{d} x_3 dx_1$, and that the unique cycle in $\hat{G}_{x_0^{d} x_3 dx_1}$ contains both terminal and quasi-terminal vertices in $\hat{G}_{x_0^{d} x_3 dx_1}$. Hence, by Corollary~\ref{C:no_source_1_cycles_ter_quasi-ter}, if $d \geq 3$, for every $\Omega \in E_d$ satisfying $L_v\Omega=0$, where $v=v_1$ is as in \eqref{E:nil_P3}, $x_0^{d} x_3 dx_1$ does not appear in the expansion of $\Omega$ with respect to $B_d$.
\end{example}

\begin{example}\label{Ex:third_example_table}
In Figure~\ref{F:Figura_4}, we display $\tilde{G}_{x_0^{d} x_2 dx_2}$, with $d \geq 4$. There are no source-extremal paths in $\tilde{G}_{x_0^{d} x_2 dx_2}$ starting at $x_0^{d} x_2 dx_2$, since $x_0^d x_2 dx_2$ is the unique source of index $1$. Note that $\tilde{G}_{x_0^d x_2 dx_2}$ contains exactly seven cycles, and each of these cycles contains at least one of the quasi-terminal vertices
\[
x_0^{d-2}x_1^3 dx_1
\quad \text{and} \quad
x_0^{d-2}x_1^2 x_2 dx_0.
\]

Hence, by Corollary~\ref{C:no_source_1_cycles_ter_quasi-ter}, if $d \geq 4$, for every $\Omega \in E_d$ satisfying $L_v\Omega=0$, where $v=v_2$ is as in \eqref{E:nil_P3}, $x_0^{d} x_2 dx_2$ does not appear in the expansion of $\Omega$ with respect to $B_d$.

\begin{figure}[h]
\centering
\begin{tikzpicture}[
    >=stealth,
    every arrow/.style={->, thin, shorten >=0.5pt, shorten <=0.5pt},
]
\matrix (m) [matrix of nodes,
             row sep=0.5cm,
             column sep=0.5cm] 
{
& |(A)| {$x_0^{d} x_2 dx_2$} & &  &\\
&|(B)| {$x_0^{d-1} x_1 x_2 dx_2$} &  & |(C)| {$x_0^{d-1} x_2^2 dx_1$} & \\
|(D)| {$x_0^{d-1} x_1^2 dx_2$} & & |(E)| {$x_0^{d-1}x_1 x_2 dx_1$} & &|(F)| {$x_0^{d-1} x_2^2 dx_0$} \\
|(G)| {$x_0^{d-2} x_1^3 dx_2$} & & |(H)| {$x_0^{d-2} x_1^2 x_2 dx_1$} & &|(I)| {$x_0^{d-2} x_1 x_2^2 dx_0$} \\
&|(J)| {$x_0^{d-2} x_1^3 dx_1$} & & |(K)| {$x_0^{d-2} x_1^2 x_2 dx_0$}& \\
&|(L)| {$x_0^{d-3} x_1^4 dx_1$}  & & |(M)| {$x_0^{d-3} x_1^3 x_2 dx_0$}& \\
& & |(N)| {$x_0^{d-3} x_1^4 dx_0$}& & \\
& & |(O)| {$x_0^{d-4} x_1^5dx_0$}& & \\
};

\draw[every arrow] (A) -- (B);
\draw[every arrow] (D) -- (B);
\draw[every arrow] (E) -- (B);
\draw[every arrow] (E) -- (C);
\draw[every arrow] (F) -- (C);
\draw[every arrow] (D) -- (G);
\draw[every arrow] (E) -- (H);
\draw[every arrow] (F) -- (I);
\draw[every arrow] (J) -- (G);
\draw[every arrow] (J) -- (H);
\draw[every arrow] (N) -- 
node[pos=0.4, left, xshift=-2pt, yshift=-1pt, font=\scriptsize] {$(d \geq 4)$} 
(O);
\draw[every arrow] (K) -- (H);
\draw[every arrow] (K) -- (I);
\draw[every arrow] (J) -- 
node[pos=0.4, left, xshift=-2pt, yshift=-1pt, font=\scriptsize] {$(d \geq 3)$} 
(L);
\draw[every arrow] (K) -- 
node[pos=0.4, right, xshift=2pt, yshift=-1pt, font=\scriptsize] {$(d \geq 3)$} 
(M);
\draw[every arrow] (N) -- 
node[pos=0.4, left, xshift=-2pt, yshift=-5pt, font=\scriptsize] {$(d \geq 3)$} 
(L);
\draw[every arrow] (N) -- 
node[pos=0.4, right, xshift=0pt, yshift=-5pt, font=\scriptsize] {$(d \geq 3)$} 
(M);
\end{tikzpicture}
\caption{Representation of $\tilde{G}_{x_0^{d} x_2 dx_2}$}
\label{F:Figura_4}
\end{figure}
\end{example}

\begin{remark}\label{R:Remark_equiv_results}
Let $G,H \in \{\hat{G},\tilde{G}\}$ and $\alpha,\beta \in B_d$. Assume that
\[
(G_\alpha,\alpha) \cong (H_\beta,\beta).
\]
Suppose that $G_\alpha$ satisfies the hypotheses of either Corollary~\ref{C:desi_withou_cycles_index_1_source}, Proposition~\ref{P:origin_ter_or_quasi-terminal}, Corollary~\ref{C:cor_desri_ter_quasi}, or Corollary~\ref{C:no_source_1_cycles_ter_quasi-ter}. Then the same holds for $H_\beta$. Hence, for every $\Omega \in E_d$ satisfying $L_v\Omega=0$, $\beta$ does not appear in the expansion of $\Omega$ with respect to $B_d$, where $v=v_1$ (resp. $v=v_2$) as in \eqref{E:nil_P3} if $H=\hat{G}$ (resp. $H=\tilde{G}$).

However, note that we may have $G_\alpha=G_\beta$ without having
\[
(G_\alpha,\alpha) \cong (G_\beta,\beta).
\]
This occurs for the directed graph $\hat{G}_{x_0^{d+1} dx_3}$ considered in Example~\ref{Ex:ex_special_path_G_alpha}. Indeed, we saw there that
\[
\hat{G}_{x_0^{d+1} dx_3}=\hat{G}_{x_0^d x_1 dx_2}.
\]
Since $x_0^{d+1}dx_3$ and $x_0^d x_1dx_2$ are sources of indices $1$ and $2$ in $\hat{G}_{x_0^{d+1}dx_3}$ and $\hat{G}_{x_0^d x_1dx_2}$, respectively, we do not have
\[
(\hat{G}_{x_0^{d+1}dx_3},x_0^{d+1}dx_3)
\cong
(\hat{G}_{x_0^d x_1dx_2},x_0^d x_1dx_2).
\]

Note also that, by Example~\ref{Ex:return_ex_special_path_G_alpha}, there are no source-extremal paths in \(\hat{G}_{x_0^{d+1}dx_3}\) starting at \(x_0^{d+1}dx_3\). On the other hand, Figure~\ref{F:Figura_1} shows that there exists a source-extremal path in \(\hat{G}_{x_0^d x_1dx_2}\) starting at \(x_0^d x_1dx_2\) and ending at \(x_0^{d+1}dx_3\).
\end{remark}

\begin{remark}\label{R:possibly_other_results}
We observe that whenever Proposition~\ref{P:origin_ter_or_quasi-terminal} is applicable, Corollary~\ref{C:cor_desri_ter_quasi} is applicable as well. Indeed, if $\alpha_0$ is terminal or quasi-terminal in $G_{\alpha_0}$, then every admissible path starting at $\alpha_0$ necessarily contains a terminal or quasi-terminal vertex, namely $\alpha_0$ itself.

Similarly, Corollary~\ref{C:no_source_1_cycles_ter_quasi-ter}, which allows us to consider only cycles instead of cyclic paths, can always be replaced by Corollary~\ref{C:cor_desri_ter_quasi}.
\end{remark}

\begin{proof}[Proof of Proposition~\ref{L:alg_mult_at_least_2}]
Set $d=\deg(\F) \geq 2$. Let $E_d$, $B_d$, $\hat{G}=\hat{G}(d)$ and $\tilde{G}=\tilde{G}(d)$ be as in Example~\ref{Ex:intro_two_subgraph}. Recall that $\F$ is defined in homogeneous coordinates by some $\Omega \in E_d$. Thus, we can write in an unique way
\[
\Omega=\sum_{m=0}^3 \sum_{|I|=d+1} a(I;m)x^Idx_m,
\] 
for some complex numbers $a(I;m)$.

By writing the expression of $\Omega$ in the affine chart $(1:z:y:x)$, we see that $p=(1:0:0:0)$ is a singular point of $\F$ means that $a(I;m)=0$, for $I=(d+1,0,0,0)$ and $m=1,2,3$, and its algebraic multiplicity is at least $2$ if additionally $a(I;m)=0$, for $I=(i=d,j,k,l)$ and $m=1,2,3$. This amounts to verifying that $12$ of the coefficients $a(I;m)$ vanish. Define $C$ as the subset of $B_d$ consisting of the elements $x^I dx_m$ corresponding to each of these $12$ coefficients.

Since $v$ is nilpotent, it follows that $L_v\Omega=0$ \cite[Lemma~5.1]{RaRuJo2022}. If $d \geq 4$, the result will follow only from this relation.

We shall give a proof with the aid of Tables~\ref{Tab:1} and \ref{Tab:2} below.

\begin{table}[H]
\centering
\small
\setlength{\tabcolsep}{6pt}
\renewcommand{\arraystretch}{1.2}
\resizebox{\textwidth}{!}{
\begin{tabular}{>{\centering\arraybackslash}m{2.6cm}
                >{\centering\arraybackslash}m{3.2cm}
                >{\centering\arraybackslash}m{4.6cm}
                >{\centering\arraybackslash}m{3.2cm}}
\toprule
$\alpha = x^I dx_m$ 
& Number of cycles in $\hat{G}_\alpha$ 
& Reason for $a(I;m)=0$
& Comment \\
\midrule

$x_0^{d+1} dx_1$ & 0 
& $(\hat{G}_\alpha,\alpha) \cong (\hat{G}_{x_0^{d+1} dx_3}, x_0^{d+1} dx_3)$
&  \\

$x_0^{d+1} dx_2$ & 0 
& Corollary~\ref{C:desi_withou_cycles_index_1_source} or Proposition~\ref{P:origin_ter_or_quasi-terminal}
&  \\

$x_0^{d+1} dx_3$ & 0 
&  Corollary~\ref{C:desi_withou_cycles_index_1_source} or Proposition~\ref{P:origin_ter_or_quasi-terminal}
& See Example~\ref{Ex:return_ex_special_path_G_alpha} \\

$x_0^{d} x_1 dx_1$ & 0 
& $(\hat{G}_\alpha,\alpha) \cong (\hat{G}_{x_0^{d+1} dx_3}, x_0^{d+1} dx_3)$
& \\

$x_0^{d} x_1 dx_2$ & 0 
& Proposition~\ref{P:origin_ter_or_quasi-terminal}
& $\hat{G}_\alpha = \hat{G}_{x_0^{d+1} dx_3}$ \\

$x_0^{d} x_1 dx_3$ & 0 
& $(\hat{G}_\alpha,\alpha) \cong (\hat{G}_{x_0^{d+1} dx_3}, x_0^{d+1} dx_3)$
& \\

$x_0^{d} x_2 dx_1$ & 1 
& Proposition~\ref{P:origin_ter_or_quasi-terminal} or Corollary~\ref{C:no_source_1_cycles_ter_quasi-ter}
&  \\

$x_0^{d} x_2 dx_2$ & 0 
& Corollary~\ref{C:desi_withou_cycles_index_1_source} or Proposition~\ref{P:origin_ter_or_quasi-terminal}
&  \\

$x_0^{d} x_2 dx_3$ & 1 
& $(\hat{G}_\alpha,\alpha) \cong (\hat{G}_{x_0^{d} x_2 dx_1}, x_0^{d} x_2 dx_1)$
&  \\

$x_0^{d} x_3 dx_1$ & 1 
& Corollary~\ref{C:no_source_1_cycles_ter_quasi-ter}
&  See Example~\ref{Ex:sec_example_table}, $d \geq 3$ \\

$x_0^{d} x_3 dx_2$ & 1 
& $(\hat{G}_\alpha,\alpha) \cong (\hat{G}_{x_0^{d} x_2 dx_3}, x_0^{d} x_2 dx_3)$
& $\hat{G}_\alpha = \hat{G}_{x_0^{d} x_2 dx_3}$ \\

$x_0^{d} x_3 dx_3$ & 1 
& $(\hat{G}_\alpha,\alpha) \cong (\hat{G}_{x_0^{d} x_3 dx_1}, x_0^{d} x_3 dx_1)$
&  $ d \geq 3$ \\

\bottomrule
\end{tabular}
}
\caption{Justification that $a(I;m)=0$ when $x^I dx_m \in C$ and $v=v_1$.}
\label{Tab:1}
\end{table}

\begin{table}[H]
\centering
\small
\setlength{\tabcolsep}{6pt}
\renewcommand{\arraystretch}{1.2}
\resizebox{\textwidth}{!}{
\begin{tabular}{>{\centering\arraybackslash}m{2.6cm}
                >{\centering\arraybackslash}m{3.2cm}
                >{\centering\arraybackslash}m{4.6cm}
                >{\centering\arraybackslash}m{3.2cm}}
\toprule
$\alpha = x^I dx_m$ 
& Number of cycles in $\tilde{G}_\alpha$ 
& Reason for $a(I;m)=0$
& Comment \\
\midrule

$x_0^{d+1} dx_1$ & 0 
& Corollary~\ref{C:desi_withou_cycles_index_1_source} or Proposition~\ref{P:origin_ter_or_quasi-terminal}
&  \\

$x_0^{d+1} dx_2$ & 1 
& Corollary~\ref{C:no_source_1_cycles_ter_quasi-ter}
&  \\

$x_0^{d+1} dx_3$ & 0 
& Corollary~\ref{C:desi_withou_cycles_index_1_source} or Proposition~\ref{P:origin_ter_or_quasi-terminal}
&  \\

$x_0^{d} x_1 dx_1$ & 1 
& Proposition~\ref{P:origin_ter_or_quasi-terminal} or Corollary~\ref{C:cor_desri_ter_quasi}
& $\tilde{G}_\alpha = \tilde{G}_{x_0^{d+1} dx_2}$ \\

$x_0^{d} x_1 dx_2$ & 3 
& Corollary~\ref{C:no_source_1_cycles_ter_quasi-ter}
& $d \geq 3$ \\

$x_0^{d} x_1 dx_3$ & 0 
& $(\tilde{G}_\alpha,\alpha) \cong (\hat{G}_{x_0^{d+1} dx_2},x_0^{d+1} dx_2)$
&  \\

$x_0^{d} x_2 dx_1$ & 3 
& Corollary~\ref{C:no_source_1_cycles_ter_quasi-ter}
& $\tilde{G}_\alpha = \tilde{G}_{x_0^{d} x_1 dx_2}$ \\

$x_0^{d} x_2 dx_2$ & 7 
& Corollary~\ref{C:no_source_1_cycles_ter_quasi-ter}
& See Example~\ref{Ex:third_example_table}, $d \geq 4$ \\

$x_0^{d} x_2 dx_3$ & 0 
& $(\tilde{G}_\alpha,\alpha) \cong (\hat{G}_{x_0^{d+1} dx_3}, x_0^{d+1} dx_3)$
& See Example~\ref{Ex:ex_special_path_G_alpha} \\

$x_0^{d} x_3 dx_1$ & 0 
& $(\tilde{G}_\alpha,\alpha) \cong (\tilde{G}_{x_0^{d+1} dx_1}, x_0^{d+1} dx_1)$
&  \\

$x_0^{d} x_3 dx_2$ & 1 
& $(\tilde{G}_\alpha,\alpha) \cong (\tilde{G}_{x_0^{d+1} dx_2}, x_0^{d+1} dx_2)$
& $d \geq 3$ \\

$x_0^{d} x_3 dx_3$ & 0 
& $(\tilde{G}_\alpha,\alpha) \cong (\tilde{G}_{x_0^{d+1} dx_3}, x_0^{d+1} dx_3)$
&  \\

\bottomrule
\end{tabular}
}
\caption{Justification that $a(I;m)=0$ when $x^I dx_m \in C$ and $v=v_2$.}
\label{Tab:2}
\end{table}

Once the directed graphs $\hat{G}_\alpha$ and $\tilde{G}_\alpha$ are obtained for $\alpha \in C$ (see Remark~\ref{R:constructing_G_u}), we observe that all of them are ss-digraphs. As shown in Example~\ref{Ex:return_ex_special_path_G_alpha}, both Corollary~\ref{C:desi_withou_cycles_index_1_source} and Proposition~\ref{P:origin_ter_or_quasi-terminal} imply that
\[
a(d+1,0,0,0;3)=0
\]
when $v=v_1$, as indicated in row $3$ of Table~\ref{Tab:1}. Similarly, the reader may verify that the same results apply to show that
\[
a(d+1,0,0,0;2)=0
\]
for $v=v_1$, as indicated in row $2$ of Table~\ref{Tab:1}.

In Tables~\ref{Tab:1} and \ref{Tab:2}, in each case, the results that may be used to prove that $a(I;m)=0$ are not necessarily restricted to those listed in the tables; see Remark~\ref{R:possibly_other_results}. 

Moreover, whenever the reason for $a(I;m)=0$ is of the form
\[
(\hat{G}_\alpha,\alpha) \cong (H_\beta,\beta)
\quad \text{or} \quad
(\tilde{G}_\alpha,\alpha) \cong (H_\beta,\beta),
\]
where $H \in \{\hat{G},\tilde{G}\}$ and $\beta \in C$, we are implicitly using Remark~\ref{R:Remark_equiv_results} together with the fact that $H_\beta$ satisfies the hypotheses of one of the results mentioned in that remark. We chose to present the justification in this way, rather than simply describing the results used as in other rows, because, since we are not displaying all $24$ directed graphs, it provides additional information to the reader.

As observed in Example~\ref{Ex:ex_special_path_G_alpha}, the structure of $\hat{G}_{x_0^{d+1} dx_3}$ does not depend on $d\geq 2$ (in fact, it is clear there that the same holds for every $d\geq 1$). More precisely, we have
\[
(\hat{G}(d_1)_{x_0^{d_1+1} dx_3},x_0^{d_1+1} dx_3)
\cong
(\hat{G}(d_2)_{x_0^{d_2+1} dx_3},x_0^{d_2+1} dx_3)
\]
for all $d_1,d_2\geq 2$.

A similar phenomenon occurs for $\hat{G}_\alpha$ and $\tilde{G}_\alpha$ when $\alpha \in C$, except for a few cases indicated in Tables~\ref{Tab:1} and \ref{Tab:2}. More precisely, for
\[
\hat{G}_{x_0^d x_3 dx_1},\;
\hat{G}_{x_0^d x_3 dx_3},\;
\tilde{G}_{x_0^d x_1 dx_2},
\text{ and }
\tilde{G}_{x_0^d x_3 dx_2},
\]
one must assume $d\geq 3$, while for
\[
\tilde{G}_{x_0^d x_2 dx_2},
\]
one must assume $d\geq 4$ (see Examples~\ref{Ex:justifies_ex_article} and \ref{Ex:third_example_table} for $\hat{G}_{x_0^d x_3 dx_1}$ and $\tilde{G}_{x_0^d x_2 dx_2}$, respectively).

We thus conclude that the $12$ coefficients vanish for $v=v_1$ and $d\geq 3$, and for $v=v_2$ and $d\geq 4$. For the remaining cases, we determine an explicit form for $\Omega$. This is done by solving the system of linear equations
\begin{equation}\label{E:System_linear}
L_v \Omega=0, \qquad i_R \Omega=0, \qquad i_v \Omega=0.
\end{equation}
Note that, depending on $d$, this system has
\[
4\binom{d+4}{3}
=
\frac{2(d+4)(d+3)(d+2)}{3}
\]
variables, namely the coefficients of $\Omega \in E_d$.

From this point on, both here and in the proof of Theorem~\ref{T:structure}, whenever we derive an explicit expression for $\Omega$ by solving \eqref{E:System_linear}, we use the software \textsc{Maple}. We have uploaded to arXiv, as ancillary files, the \textsc{Maple} code used to perform these computations. This occurs on five occasions, including the three remaining cases
\[
d=2 \text{ and } v=v_1,\qquad
d=2 \text{ and } v=v_2,\qquad
d=3 \text{ and } v=v_2.
\]
After obtaining an explicit expression for $\Omega$, we verify that the $12$ coefficients vanish, thereby completing the proof of the proposition.
\end{proof}

\begin{proof}[Proof of Lemma~\ref{L:divides_in_one_point}]
Assume on the contrary that $v$ does not locally divide $\F$ at $p$. If $v=v_1$, in the affine chart $(1:z:y:x)$ it is represented by
\[
-V=-xz \frac{\partial}{\partial x}+(x-yz) \frac{\partial}{\partial y}-z^2 \frac{\partial}{\partial z}=x \frac{\partial}{\partial y}-zR_3,
\]
where $R_3:=x\partial/\partial x+y\partial/\partial y+z\partial/\partial z$.

For a polynomial $1$-form $\omega$ defining $\F$ in the same affine chart, since $\TF$ is locally free at $p$, it follows from Proposition~\ref{P:known_fact_2} that
\[
\omega=i_Z i_W (dx \wedge dy \wedge dz),
\]
where $Z,W \in \mathcal{X}(\mathbb{C}^3,0)$. By Lemma~\ref{R:existence_functions}, we have that $V=fZ+gW$, for $f,g \in \mathcal{O}_{\mathbb{C}^3,0}$. Since we are assuming that $v$ does not locally divide $\F$ at $p$, by the particular case of Lemma~\ref{L:divides_f_and_g} we get that $f(0)=g(0)=0$. Hence
\[
\sing (V)=\{f=g=0\}=\{x=z=0\},
\]
and we can write $f=xf_1+zf_2$ and $g=xg_1+zg_2$, for $f_1,f_2,g_1,g_2 \in \mathcal{O}_{\mathbb{C}^3,0}$. Then 
\begin{equation}\label{E:eq_for_V}
V=x X+z Y,
\end{equation}
where $X:=f_1 Z+g_1 W$ and $Y:=f_2 Z+g_2 W$. Writing $X=\sum_{i=0}^{\infty}\hat{X}_i$ and $Y=\sum_{i=0}^{\infty}\hat{Y}_i$, from \eqref{E:eq_for_V} we obtain that
\[
x \hat{X}_0 + z \hat{Y}_0=-x \frac{\partial}{\partial y},
\]
which implies that $\hat{Y}_0=0$ and $\hat{X}_0=-\partial/\partial y$. It also follows from \eqref{E:eq_for_V} that $x\hat{X}_1+z\hat{Y}_1=zR_3$, which implies that
\[
\hat{Y}_1=(x-ax)\frac{\partial}{\partial x}+(y-bx)\frac{\partial}{\partial y}+(z-cx)\frac{\partial}{\partial z},
\]
for some complex numbers $a,b,c$.

Since $i_{\hat{X}_0} i_{\hat{Y}_1} (dx \wedge dy \wedge dz) \neq 0$, we conclude that $\eta=i_X i_Y (dx \wedge dy \wedge dz)$ also defines $\F$ around $0 \in \C^3$ and the singularity $p$ of $\F$ has algebraic multiplicity $1$. In a very similar way, we conclude the same if $v=v_2$. On the other hand, we know from Proposition~\ref{L:alg_mult_at_least_2} that $p$ is a singular point of $\F$ with algebraic multiplicity at least $2$, obtaining a contradiction. This finishes the proof.
\end{proof}

For convenience, we state again Theorem~\ref{T:structure}.

\begin{thmC}
Let $\F$ be a codimension one holomorphic foliation on $\mathbb P^3$ with locally free tangent sheaf. If there exists a nonzero holomorphic vector field on $\mathbb P^3$ tangent to $\F$, then $\TF$ splits.
\end{thmC}
\begin{proof}
Let $v \neq 0$ be a holomorphic vector field on $\mathbb P^3$ tangent to $\F$. Recall that it can be represented by a class of homogeneous linear polynomial vector field $v$ on $\mathbb C^4$. Note that $\sing(v)$ is the image, under the canonical projection $\Pi: \mathbb{C}^4 \setminus \{0\} \to \mathbb{P}^3$, of the set
\[
\{ p \in \mathbb C^4 \setminus \{0\} : v(p) \wedge R(p)=0\}.
\]
Denote by $\G$ the one-dimensional foliation on $\Pj^3$ defined by $v$, see Remark~\ref{R:vec_field_def_fol}. Note that $\sing(\G)=\sing(v)$ if and only if $\codim(\sing(v)) \geq 2$.

Recall also that if $d=\deg(\F)$, then $\F$ is defined in homogeneous coordinates by 
\[
\Omega=\sum_{i=0}^3 A_i(x_0,\ldots,x_3) dx_i,
\]
where $A_0,\ldots,A_3$ are homogeneous polynomials of degree $d+1$.

If $\codim(\sing( v))=1$ or $\codim(\sing( v))=3$, then $\codim(\sing(\G))=3$, i.e., $\sing(\G)$ has isolated singularities. In fact, if $\codim(\sing( v))=1$, then, up to a linear automorphism of $\mathbb P^3$, we can assume that
\[
v=x_3 \frac{\partial}{ \partial x_3}.
\]
In this case, $\deg(\G)=0$ and $\sing(\G)=\{(0:0:0:1)\}$. On the other hand, if $\codim(\sing( v))=3$, then $\deg(\G)=1$, and the result follows simply because $\sing(\G)=\sing(v)$. Thus, we can apply Corollary~\ref{C:cod_3_divides} to conclude that $\G$ locally divides $\F$. Moreover, in the former case,
\[
\TF\cong \mathcal{O}_{\mathbb{P}^3}(1) \oplus \mathcal{O}_{\mathbb{P}^3}(1-d),
\]
while in the latter case,
\[
\TF\cong \mathcal{O}_{\mathbb{P}^3} \oplus \mathcal{O}_{\mathbb{P}^3}(2-d),
\]
see Remark~\ref{R:decomponability_G_D}. Note that the hypothesis that $\TF$ is locally free is not used in either case.

If $\codim(\sing(v)) = 2$, then $\sing(\G)=\sing(v)$. Writing
\[
v=S \cdot \mathrm{X}
\]
as in \eqref{E:jordan_vector}, we see that $S$ must have one of the following Jordan normal forms, where $a$ and $b$ are distinct nonzero complex numbers:
\begin{multicols}{2}
\begin{enumerate}[(I)]
\item\label{1:Mat_cases}
\(
\begin{pmatrix}
a & 0 & 0 & 0\\
0 & b & 0 & 0\\
0 & 0 & 0 & 0\\
0 & 0 & 0 & 0
\end{pmatrix}
\)
\item\label{2:Mat_cases}
\(
\begin{pmatrix}
a & 0 & 0 & 0\\
0 & a & 0 & 0\\
0 & 0 & 0 & 0\\
0 & 0 & 0 & 0
\end{pmatrix}
\)
\item\label{3:Mat_cases}
\(
\begin{pmatrix}
a & 1 & 0 & 0\\
0 & a & 0 & 0\\
0 & 0 & 0 & 0\\
0 & 0 & 0 & 0
\end{pmatrix}
\)
\item\label{4:Mat_cases}
\(
\begin{pmatrix}
a & 0 & 0 & 0\\
0 & a & 0 & 0\\
0 & 0 & b & 0\\
0 & 0 & 0 & 0
\end{pmatrix}
\)
\item\label{5:Mat_cases}
\(
\begin{pmatrix}
0 & 1 & 0 & 0\\
0 & 0 & 0 & 0\\
0 & 0 & 0 & 1\\
0 & 0 & 0 & 0
\end{pmatrix}
\)
\item\label{6:Mat_cases}
\(
\begin{pmatrix}
0 & 1 & 0 & 0\\
0 & 0 & 1 & 0\\
0 & 0 & 0 & 0\\
0 & 0 & 0 & 0
\end{pmatrix}
\)
\end{enumerate}
\end{multicols}

We begin by analyzing cases \eqref{1:Mat_cases}, \eqref{2:Mat_cases} and \eqref{3:Mat_cases} simultaneously. The vector field $v$ takes the following form:
\[
v=(ax_0 + \varepsilon x_1) \frac{\partial}{\partial x_0}+bx_1  \frac{\partial}{\partial x_1}.
\]
Here we allow $a=b$ (in \eqref{2:Mat_cases} and \eqref{3:Mat_cases}), $\varepsilon=0$ in \eqref{1:Mat_cases} and \eqref{2:Mat_cases} and $\varepsilon=1$ in \eqref{3:Mat_cases}. A simple verification shows that $\G$ has linear rank at least two in all three cases. 

Next, we will need $d \geq  1$, let us deal once and for all with the case $d=0$. It is well known that in a suitable coordinate system $\F$ can be defined in homogeneous coordinates by $\Omega=x_0 dx_1-x_1 d x_0$. Since
\[
\Omega=i_{\frac{\partial}{\partial x_2}}i_{\frac{\partial}{\partial x_3}} i_R (dx_0 \wedge \cdots \wedge dx_3),
\]
it follows from \eqref{E:split} that 
\[
\TF \cong \mathcal{O}_{\Pj^3}(1) \oplus \mathcal{O}_{\Pj^3}(1).
\]

Now assume that \(d \geq 1\). By Theorem~\ref{T:B}, it suffices to verify that every irreducible component of \(\sing(v)\) intersects \(\sing(\F)\). Moreover, we have
\[
\TF \cong \mathcal{O}_{\mathbb{P}^3} \oplus \mathcal{O}_{\mathbb{P}^3}(2-d).
\]

A straightforward verification shows that the relation $i_v \Omega=0$ implies that $A_0=x_1B$ and $A_1=-\frac{ax_0+\varepsilon x_1}{b}B$, for some homogeneous polynomial $B(x_0,\ldots,x_3)$ satisfying $\deg(B) \geq 1$. From the relation $i_R \Omega=0$ we get
\[
x_2A_2(0,0,x_2,x_3)+x_3A_3(0,0,x_2,x_3)=0,
\]
which means that $A_2(0,0,x_2,x_3)=x_3C(x_2,x_3)$ and $A_3(0,0,x_2,x_3)=-x_2C(x_2,x_3)$, for some homogeneous polynomial $C$ in the variables $x_2$ and $x_3$ and $\deg(C) \geq 1$. Therefore
\begin{equation}\label{E:int_sing_set}
\Omega(0,0,x_2,x_3)=C(x_2,x_3)(x_3dx_2-x_2 dx_3).
\end{equation}

The line 
\[
l_1=\{(x_0:\cdots:x_3):x_0=x_1=0\}
\] 
is contained in $\sing(v)$ in all three cases. Furthermore, in \eqref{1:Mat_cases} and \eqref{3:Mat_cases} it is the only one-dimensional irreducible component of $\sing(v)$. It follows from \eqref{E:int_sing_set} that $l_1 \cap \sing(\F) \neq \emptyset$ and the result follows. In \eqref{2:Mat_cases}, there is an extra irreducible component of dimension one contained in $\sing(v)$, given by the line 
\[
l_2=\{(x_0:\cdots:x_3):x_2=x_3=0\}.
\] 
As $v$ is also represented by 
\[
v-aR=-ax_2 \frac{\partial}{\partial x_2}-ax_3 \frac{\partial}{\partial x_3},
\]
similarly we show that
\begin{equation}\label{E:int_sing_set_II}
\Omega(x_0,x_1,0,0)=B(x_0,x_1,0,0)(x_1dx_0-x_0dx_1).
\end{equation}
It follows from \eqref{E:int_sing_set_II} that $l_2 \cap \sing(\F) \neq \emptyset$ and the result follows in case \eqref{2:Mat_cases}.

In \eqref{4:Mat_cases}, since $v$ is also represented by
\[
v-aR=(b-a)x_2 \frac{\partial}{\partial x_2}-ax_3 \frac{\partial}{\partial x_3},
\]
after a linear change of coordinates it follows that this case reduces to case \eqref{1:Mat_cases}.

It remains to analyze the two cases \eqref{5:Mat_cases} and \eqref{6:Mat_cases}, in which $v$ defines a nilpotent vector field on $\Pj^3$. If $v$ is given by \eqref{5:Mat_cases}, then
\[
\sing(v)=\{(x_0:\cdots:x_3):x_1=x_3=0\},
\] 
and all the singularities of $v$ have linear rank $1$. If $v$ is given by \eqref{6:Mat_cases}, then
\[
\sing(v)=\{(x_0:\cdots:x_3):x_1=x_2=0\},
\] 
and all the singularities of $v$ have linear rank $2$, with exception to the point $p=(1:0:0:0)$, which has linear rank $1$. Note that \eqref{5:Mat_cases} and \eqref{6:Mat_cases} correspond to \(v_1\) and \(v_2\) in \eqref{E:nil_P3}, respectively.

Thus, if $d \geq 2$ and $v$ is given by \eqref{5:Mat_cases} or \eqref{6:Mat_cases}, since in both cases $\sing(\G)$ consists of a single line containing $p$, from Lemma~\ref{L:divides_in_one_point} and Corollary~\ref{C:finite_check} we have that $\G$ locally divides $\F$ and $\TF$ splits. Besides, 
\[
\TF\cong \mathcal{O}_{\mathbb{P}^3} \oplus \mathcal{O}_{\mathbb{P}^3}(2-d).
\]

Finally, if $d=1$ and $v$ is given by \eqref{5:Mat_cases} or \eqref{6:Mat_cases}, we show that there exists a nonnilpotent holomorphic vector field on $\Pj^3$ tangent to $\F$, and the result follows from what we have already proved. Note that in this case
\[
\TF\cong \mathcal{O}_{\mathbb{P}^3} \oplus \mathcal{O}_{\mathbb{P}^3}(1).
\]

Assume that $d=1$ and $v$ is given by \eqref{5:Mat_cases}. We have used Maple to solve \eqref{E:System_linear}, then we have that $\F$ is defined by
\begin{align*}
\Omega=&(\alpha x_1 x_3 +\beta x_3^2) dx_0+(\alpha x_1 x_2 -2\alpha x_0 x_3 -\beta x_2 x_3-\gamma x_1 x_3-\delta x_3^2) dx_1+\\
&(-\alpha x_1^2-\beta x_1 x_3)dx_2+
(\alpha x_0 x_1+2\beta x_1 x_2-\beta x_0 x_3 +\gamma x_1^2+\delta x_1 x_3 )dx_3,
\end{align*}
with $\alpha,\beta,\gamma,\delta \in \C$. Since $\codim(\sing(\F)) \geq 2$, it follows that $\alpha \neq 0$ or $\beta \neq 0$. If $\alpha \neq 0$ we can assume, with no loss, that $\alpha=1$. Then a simple verification shows that
\[
W=(2x_0+\gamma x_1 +\beta x_2 +\delta x_3)\frac{\partial}{\partial x_0}+(x_1+\beta x_3)\frac{\partial}{\partial x_1}+x_2\frac{\partial}{\partial x_2}
\]
defines a holomorphic vector field on $\mathbb{P}^3$ tangent to $\F$, i.e., $i_W \Omega=0$. Note that $W$ is not nilpotent. On the other hand, if $\alpha=0$ and $\beta \neq 0$ we can assume, with no loss, that $\beta=1$. Then it can be easily verified that
\[
\delta x_1 \frac{\partial}{\partial x_0}+x_1 \frac{\partial}{\partial x_1}+(-\gamma x_1-x_2)\frac{\partial}{\partial x_2}
\]
defines a nonnilpotent vector field on $\mathbb{P}^3$ tangent to $\F$.

Finally, assume that $d=1$ and $v$ is given by \eqref{6:Mat_cases}. We have used Maple to solve \eqref{E:System_linear}, then we have that $\F$ is defined by
\begin{align*}
\Omega=&(\alpha x_2 x_3+\beta x_2^2) dx_0+(-\alpha x_1 x_3-\beta x_1 x_2) dx_1+\\
&(\gamma x_3^2+\alpha x_0 x_3+\delta x_2 x_3 -\beta x_0 x_2 +\beta x_1^2)dx_2+\\
&(-\gamma x_2 x_3-2\alpha x_0 x_2 +\alpha x_1^2 -\delta x_2^2)dx_3,
\end{align*}
with $\alpha,\beta,\gamma,\delta \in \C$. We claim that $\alpha=0$. In fact, since $\sing(v)=\{(x_1:\cdots:x_3):x_1=x_2=0\}$, a simple verification shows that if $\alpha \neq 0$ then
\[
\sing(v) \cap \sing(\F)=\{(1:0:0:0),(-\gamma:0:0:\alpha)\}.
\]
As $p_1:=(-\gamma:0:0:\alpha)$ is a singular point of $v$ with linear rank two, and $\TF$ is locally free at $p_1$, from part \eqref{two:C:not_free} of Corollary~\ref{C:not_free} we would obtain a contradiction. Since $\alpha=0$ and $\codim(\sing(\F)) \geq 2$, it follows that $\beta \neq 0$; thus we can assume, with no loss, that $\beta=1$. So it is easily seen that
\[
(\delta x_2+\gamma x_3)\frac{\partial }{\partial x_1}-x_1\frac{\partial }{\partial x_3}
\]
defines a holomorphic vector field on $\mathbb{P}^3$ tangent to $\F$, which is nonnilpotent if $\gamma \neq 0$. On the other hand, if $\gamma=0$, then one readily sees that
\[
\delta \frac{\partial }{\partial x_0}+\frac{\partial}{\partial x_3}
\]
is tangent to $\F$. This defines a holomorphic vector field on $\mathbb{P}^3$ with codimension one zeros.
\end{proof}

Let $v \neq 0$ be a holomorphic vector field on $\mathbb{P}^3$, and $\G$ be the one-dimensional foliation on $\mathbb{P}^3$ defined by $v$, see Remark~\ref{R:vec_field_def_fol}. It follows that $\deg(\G)=0$ or $\deg(\G)=1$. Furthermore, $\deg(\G)=0$ precisely when $\sing(v)$ has codimension one zeros. 

Regarding Theorem~\ref{T:structure}, assume that $v$ is tangent to $\F$. By analyzing its proof, it is clear that
\[
\TF \cong \mathcal{O}_{\Pj^3}(1) \oplus \mathcal{O}_{\Pj^3}(1-d)
\]
if $\deg(\G)=0$, while
\[
\TF \cong \mathcal{O}_{\Pj^3} \oplus \mathcal{O}_{\Pj^3}(2-d)
\]
if $\deg(\G)=1$, where $d=\deg(\F)$. One might wonder whether the vector field $v$ itself locally divides the foliation $\F$. This is not clear in the proof of Theorem~\ref{T:structure} when $\deg(\F)=1$ and $v$ is nilpotent and also when $\deg(\F)=0$. Of course, if this is the case, it is necessary that $\codim(\sing(v)) \geq 2$. If this condition is satisfied, the only exception where $v$ does not locally divide $\F$ is when $\deg(\F)=0$ and $\deg(\G)=1$. In fact, using Lemma~\ref{R:existence_functions} and the particular case of Lemma~\ref{L:divides_f_and_g}, we can obtain the following refinement of Theorem~\ref{T:structure}:

\begin{thm}
Let $\F$ be a codimension one holomorphic foliation on $\mathbb P^3$, with locally free tangent sheaf. Assume that there exists a nonzero holomorphic vector field $v$ on $\mathbb P^3$ tangent to $\F$. Denote by $\G$ the one-dimensional foliation on $\Pj^3$ defined by $v$, and let $\h$ be a one-dimensional foliation on $\Pj^3$ tangent to $\F$ of degree at most $1$. We have:
\begin{enumerate}
\item If $\deg(\F) \geq 3$ then $v$ locally divides $\F$ unless $\codim(\sing(v)) = 1$. Furthermore $\G$ locally divides $\F$ and $\h=\G$.
\item If $\deg(\F) = 2$ then $v$ locally divides $\F$ unless $\codim(\sing(v)) = 1$. Furthermore $\G$ locally divides $\F$ and $\deg(\h)=\deg(\G)$.
\item If $\deg(\F)=1$ then $v$ locally divides $\F$ unless $\codim(\sing(v)) = 1$. We can choose $\h$ satisfying $\deg(\h)=0$ or $\deg(\h)=1$, and for any such $\h$ we have that $\h$ locally divides $\F$. In particular $\G$ locally divides $\F$.
\item If $\deg(\F)=0$ then $v$ does not locally divide $\F$. We can choose $\h$ satisfying $\deg(\h)=0$ or $\deg(\h)=1$, and $\h$ locally divides $\F$ if and only if $\deg(\h)=0$.
\end{enumerate}
In particular $\TF$ splits and $\G$ locally divides $\F$ unless $\deg(\F)=0$ and $\deg(\G)=1$.
\end{thm}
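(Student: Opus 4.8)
The plan is to build on Theorem~\ref{T:structure}, which already furnishes the splitting of $\TF$, and to upgrade it to the stated bookkeeping by recording the precise splitting type and then analysing the low-degree subfoliations one line bundle at a time. Since $v$ is a nontrivial holomorphic vector field on $\Pj^3$, it is represented by a linear homogeneous field modulo the radial one, so the foliation $\G$ it defines has $\deg(\G)\in\{0,1\}$, with $\deg(\G)=0$ exactly when $\codim(\sing(v))=1$. Whenever $\G$ locally divides $\F$, Proposition~\ref{P:divides} gives $\TF\cong\mathcal{O}(1-\deg\G)\oplus\mathcal{O}(1-d+\deg\G)$ with $d=\deg(\F)$; the sole exception is $d=0$, where the explicit normal form $\Omega=x_0\,dx_1-x_1\,dx_0$ from the proof of Theorem~\ref{T:structure} forces $\TF\cong\mathcal{O}(1)\oplus\mathcal{O}(1)$ irrespective of $\deg(\G)$. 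A first reduction lets me trade statements about $v$ for statements about $\G$: as noted after Definition~\ref{D:def_division}, $v$ locally divides $\F$ if and only if $\codim(\sing(v))\ge2$ and $\G$ locally divides $\F$. Hence $v$ can never locally divide $\F$ when $\deg(\G)=0$, while for $\deg(\G)=1$ the field $v$ locally divides $\F$ precisely when $\G$ does.

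Next I would parametrise every one-dimensional foliation $\h$ tangent to $\F$ of degree at most $1$. Writing $\Omega=i_{X_1}i_{X_2}(\Theta)$ as in \eqref{E:split}, with homogeneous $X_1,X_2$ generating $\TF$, Remark~\ref{R:existence_functions} shows that a field defining such an $\h$ has the form $Z=f_1X_1+f_2X_2$, the $f_i$ being homogeneous of degree $\deg(\h)-\deg(X_i)$. The global version of Lemma~\ref{L:divides_f_and_g} then says that $\h$ locally divides $\F$ if and only if $f_1$ and $f_2$ have no common zero on $\Pj^3$, and that $\h$ has degree exactly $\deg(\h)$ (rather than saturating to a smaller one) precisely when $f_1,f_2$ share no nonconstant factor. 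This converts every assertion into the computation of the spaces $H^0(\mathcal{O}(\deg f_i))$ together with the emptiness of the common zero locus $\{f_1=f_2=0\}$.

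With this dictionary the four cases become essentially mechanical. For $d\ge3$ the second summand carries such a negative twist that $f_2\equiv0$ is forced once $\deg(\h)\le1$, so the only such foliation is $\G$ itself, a direct summand with nowhere-vanishing defining section; this gives $\h=\G$, that $\G$ locally divides $\F$, and that $v$ locally divides $\F$ exactly when $\deg(\G)=1$. For $d=2$ the analysis leaves $\deg(\h)=\deg(\G)$ in every case: when $\deg(\G)=0$ one is forced back to $\h=\G$, while when $\deg(\G)=1$ one has $\TF\cong\mathcal{O}^{\oplus2}$ and a whole pencil of degree-one subfoliations, all of the degree of $\G$; again $\G$ locally divides $\F$ and $v$ does so exactly when $\deg(\G)=1$. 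For $d=1$ the type is $\mathcal{O}\oplus\mathcal{O}(1)$, both degrees $0$ and $1$ are realised, and in each the inclusion retains a nonvanishing constant component, so $\{f_1=f_2=0\}=\emptyset$ and every such $\h$ locally divides $\F$; in particular $\G$ does, and $v$ locally divides $\F$ exactly when $\deg(\G)=1$. For $d=0$, using $\TF\cong\mathcal{O}(1)^{\oplus2}$, a degree-zero $\h$ is defined by constants and has empty common zero locus, hence divides, whereas a genuine degree-one $\h$ is defined by two independent linear forms whose common zero is a line, hence does not divide; since $\G$ is of this latter type when $\deg(\G)=1$ and has codimension-one zeros when $\deg(\G)=0$, the field $v$ never divides $\F$. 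Assembling these yields all four items together with the closing statement that $\TF$ splits and that $\G$ locally divides $\F$ except when $\deg(\F)=0$ and $\deg(\G)=1$.

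I expect the genuine difficulty to lie in the low-degree cases $d\in\{0,1\}$. There the second summand of $\TF$ acquires nonnegative twist, so the complementary component of the inclusion need not vanish, and one must actually classify the resulting pencil of candidate foliations and decide whether $\{f_1=f_2=0\}$ is empty; the bundle-theoretic ``divide off $\G$'' shortcut of Proposition~\ref{P:divides} is no longer available. The case $d=0$ is the most delicate, both because the general splitting formula fails there (precisely because $\G$ itself may fail to divide, so one must import the explicit normal form $\TF\cong\mathcal{O}(1)^{\oplus2}$) and because here the distinction between a genuine degree-one subfoliation and one that saturates to degree zero is exactly what separates ``$\h$ locally divides $\F$'' from ``$\h$ does not''.
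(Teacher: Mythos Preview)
Your proposal is correct and follows essentially the same approach as the paper. The paper does not give a detailed proof of this refinement: it simply notes, immediately before the statement, that by analyzing the proof of Theorem~\ref{T:structure} one has $\TF\cong\mathcal{O}(1)\oplus\mathcal{O}(1-d)$ when $\deg(\G)=0$ and $\TF\cong\mathcal{O}\oplus\mathcal{O}(2-d)$ when $\deg(\G)=1$, and then asserts that ``using Remark~\ref{R:existence_functions} and Lemma~\ref{L:divides_f_and_g}, we can obtain the following refinement.'' Your plan unpacks precisely this: record the splitting type coming from Theorem~\ref{T:structure}, write any low-degree subfoliation as $Z=f_1X_1+f_2X_2$ via Remark~\ref{R:existence_functions}, and decide local divisibility by the emptiness of $\{f_1=f_2=0\}$ via Lemma~\ref{L:divides_f_and_g}. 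Your observation that for $d\le 1$ the splitting type is forced (by $e_i\le 1$ and $e_1+e_2=2-d$) without first knowing that $\G$ itself divides is exactly what is needed to close the loop in the cases the paper flags as ``not clear in the proof of Theorem~\ref{T:structure}'' (namely $d=1$ with $v$ nilpotent, and $d=0$).
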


In the proof of Theorem~\ref{T:structure}, the integrability of $\F$ was used only when $\deg(\F)=0$ or in the case in which the vector field tangent to $\F$ is nilpotent. If $\D$ is a degree $0$ distribution on $\mathbb{P}^3$, it follows that there exists a nonzero vector field on $\Pj^3$ tangent to $\D$. In fact, if $\Omega$ represents $\D$ in homogeneous coordinates in $\mathbb{C}^4$, using the coefficients of $\Omega$, it is easy to produce a homogeneous linear polynomial vector field $v$ on $\mathbb{C}^4$ satisfying $i_v \Omega=0$, in such a way that $v$ defines a nonzero vector field on $\Pj^3$ tangent to $\D$.

We conclude that a result analogous to Theorem~\ref{T:structure} for nonintegrable distributions cannot be true in the case of degree $0$, since the tangent sheaf of a nonintegrable distribution of degree $0$ on $\Pj^3$ does not split. Thus, we can state the following:

\begin{cor}\label{C:result_for-distrib}
Let $\D$ be a codimension one singular holomorphic distribution on $\mathbb P^3$, with locally free tangent sheaf. Assume that $\deg(\D) \geq 1$ and that there exists a nonnilpotent vector field on $\Pj^3$ tangent to $\D$. Then $\TD$ splits.
\end{cor}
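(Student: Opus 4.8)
The plan is to run the argument of Theorem~\ref{T:structure} with $\F$ replaced by the distribution $\D$, relying on the observation recorded after that proof: the integrability of $\F$ was invoked only when $\deg(\F)=0$ or when the tangent vector field is nilpotent. Since the hypotheses here are $\deg(\D)\geq 1$ and $v$ nonnilpotent, both exceptional cases are excluded, so the remaining steps should apply verbatim to the possibly nonintegrable $\D$. Note first that a codimension one distribution on $\Pj^3$ is two-dimensional, so Theorem~\ref{T:B}, Corollary~\ref{C:cod_3_divides} and Proposition~\ref{P:divides} are all available.

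I would represent $v$ as a class of homogeneous linear vector fields $S\cdot \mathrm{X}$ on $\C^4$ modulo the radial field $R$, write $\G$ for the one-dimensional foliation it defines, and split into cases by $\codim(\sing(v))$. When $\codim(\sing(v))\in\{1,3\}$ one has $\codim(\sing(\G))=3$, so Corollary~\ref{C:cod_3_divides} immediately gives that $\G$ locally divides $\D$ and hence $\TD$ splits; this step uses neither local freeness nor integrability, and yields $\TD\cong\Ol_{\Pj^3}(1)\oplus\Ol_{\Pj^3}(1-d)$ or $\TD\cong\Ol_{\Pj^3}\oplus\Ol_{\Pj^3}(2-d)$ according as $\deg(\G)=0$ or $1$, where $d=\deg(\D)$.

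The substantive case is $\codim(\sing(v))=2$, where $\sing(\G)=\sing(v)$ and $S$ is conjugate to one of the Jordan forms (I)--(VI) of the proof of Theorem~\ref{T:structure}. Nonnilpotency eliminates exactly (V) and (VI), leaving (I)--(IV); case (IV) reduces to (I) by the same coordinate change, and in (I)--(III) the foliation $\G$ has linear rank at least $2$. Here I would apply Theorem~\ref{T:B}: since $\TD$ is locally free and $\G$ has linear rank $\geq 2$, it suffices to verify that every one-dimensional irreducible component of $\sing(v)$ meets $\sing(\D)$. Writing $\D$ in homogeneous coordinates by $\Omega=\sum_{i=0}^3 A_i\,dx_i$ with $A_i$ homogeneous of degree $d+1$, the tangency relation $i_v\Omega=0$ together with the Euler relation $i_R\Omega=0$ pin down the restriction of $\Omega$ to the lines $l_1=\{x_0=x_1=0\}$ and $l_2=\{x_2=x_3=0\}$ exactly as in formulas (\ref{E:int_sing_set}) and (\ref{E:int_sing_set_II}), giving $l_i\cap\sing(\D)\neq\emptyset$. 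Theorem~\ref{T:B} then produces $\h$ with $\TD=\TG\oplus\Th$, and degree bookkeeping as in Proposition~\ref{P:divides} gives $\TD\cong\Ol_{\Pj^3}\oplus\Ol_{\Pj^3}(2-d)$.

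The main thing to confirm---essentially the only obstacle---is that none of the retained computations secretly used $d\Omega\wedge\Omega=0$. I would verify this by rereading them: the identities $A_0=x_1 B$ and $A_1=-\tfrac{ax_0+\varepsilon x_1}{b}B$, and the boundary expressions for $\Omega$ along $l_1,l_2$, follow solely from $i_v\Omega=0$ and $i_R\Omega=0$; likewise Corollary~\ref{C:not_free}, used implicitly to rule out inadmissible configurations, is a statement about distributions, not foliations. Once this is checked, $\TD$ splits in every case, completing the proof.
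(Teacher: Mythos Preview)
Your proposal is correct and follows exactly the approach the paper intends: the corollary is stated immediately after the observation that in the proof of Theorem~\ref{T:structure} integrability is invoked only when $\deg(\F)=0$ or when the tangent vector field is nilpotent, and your argument simply makes this explicit by rerunning the non-nilpotent cases (I)--(IV) with $\D$ in place of $\F$. One small clean-up: your closing remark about Corollary~\ref{C:not_free} is unnecessary, since that result is used in the original proof only in case~(VI), which you have already discarded by nonnilpotency.
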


Let $\mathscr{F}_k(d,n)$ denote the set of foliations of codimension $k$ and degree $d$ on $\mathbb{P}^n$, $n \geq 3$. It turns out that $\mathscr{F}_k(d,n)$ has a natural structure of quasi-projective variety, see for example \cite{Constant}. The space of foliations $\mathscr{F}_1(d,n)$ has $1$, $2$ and $6$ irreducible components, for $d=0,1,2$, respectively, see \cite{Jouanolou} for $d=0$ and $d=1$, and \cite{CerveauLins96} for $d=2$. In the codimension one case, we use the notation of \cite{RaRuJo2022}, where the logarithmic components $\Log(\mathbb P^n)(d_1,\ldots,d_k) \subset \mathscr{F}_1(d,n) $ also include the rational components.

When the degree of a codimension one foliation $\F$ on $\Pj^3$ is low, and by avoiding certain types of first integrals for $\F$, we can guarantee that there exists a nonzero holomorphic vector field on $\Pj^3$ tangent to such foliation $\F$. Consequently, we have the following corollary of Theorem~\ref{T:structure}:

\begin{cor}\label{C:low_cases}
Let $\F$ be a codimension one foliation on $\mathbb{P}^3$. Assume that one of the following conditions holds:
\begin{enumerate}
\item\label{1:C:low_cases} $\deg(\F)=0$ or $\deg(\F)=1$.
\item\label{2:C:low_cases} $\deg(\F)=2$ and $\F$ does not admit a rational first integral of the type $\frac{P(x_0,\ldots,x_3)}{H(x_0,\ldots,x_3)^3}$, where $P$ and $H$ are irreducible homogeneous polynomials with $\deg(P)=3$ and $\deg(H)=1$.
\item\label{3:C:low_cases} $\deg(\F)=3$, $\F$ does not admit a rational first integral and $\F$ is not in a logarithmic component.
\end{enumerate}
If additionally $\TF$ is locally free then $\TF$ splits.
\end{cor}
\begin{proof}
Set $h_0(\F)=\dim H^0(\mathbb{P}^3,\TF)$. Whenever there exists a nonzero vector field on $\mathbb{P}^3$ tangent to $\F$, which means that $h_0(\F) \geq 1$, the result follows from Theorem~\ref{T:structure}. Also, if $\F$ is a linear pull-back from $\mathbb{P}^2$ then $\TF$ splits, see \cite[Corollary~5.1]{CukiermanPereira}.

If \eqref{1:C:low_cases} holds and $\deg(\F)=0$ then $\TF$ splits, as we saw in the proof of Theorem~\ref{T:structure}. For $\deg(\F)=1$, we could apply \cite[Corollary~8.9]{10.1093/imrn/rny251}, but as we will use a similar idea in case \eqref{2:C:low_cases}, for the sake of completeness we will give a proof for this case. The irreducible components of $\mathscr{F}_1(1,3)$ are given by $\Log(\mathbb P^3)(1,2)$ and $\Log(\mathbb P^3)(1,1,1)$. Hence $\F$ has an invariant hyperplane, see for example \cite[Proposition~3.7]{RaRuJo2022}. 

Let $\omega$ be a polynomial $1$-form defining $\F$ on an affine coordinate system $ (x,y,z) \in \mathbb{C}^3 \subset \mathbb{P}^3$ such that the hyperplane at infinity corresponds to such a hyperplane invariant by $\F$. The decomposition of $\omega$ into homogeneous $1$-forms is given by
\[
\omega=\hat{\omega}_0+\hat{\omega}_1,\deg(\hat{\omega}_i)=i,i=0,1 \text{ and }\hat{\omega}_1 \neq 0.
\]
In this case, it is easy to produce a polynomial vector field $X \neq 0$ on $\mathbb{C}^3$ with degree at most $1$ satisfying $i_X\omega=0$, which gives rise to $v \in H^0(\mathbb{P}^3,\TF) -\{0\}$. 

If \eqref{2:C:low_cases} holds, assume first that $\F$ is in a logarithmic component. Then $\F$ is a pencil of quadrics or it has an invariant hyperplane, see again \cite[Proposition~3.7]{RaRuJo2022}. In the former case, the result follows from \cite[Theorem~D]{FaMaJe2023}, and in the latter case, let $\omega$ be a polynomial $1$-form defining $\F$ on an affine coordinate system as in \eqref{1:C:low_cases}. Since $\deg(\F)=2$, we have
\[
\omega=\hat{\omega}_0+\hat{\omega}_1+\hat{\omega}_2,\deg(\hat{\omega}_i)=i,i=0,1,2 \text{ and }\hat{\omega}_2 \neq 0.
\]

The integrability of $\omega$ is equivalent to $i_Y\omega=0$, where $Y$ is the polynomial vector field satisfying
\[
d\omega=i_Y(dx \wedge dy \wedge dz).
\]
If $Y \neq 0$, we have that $h_0(\F) \geq 1$. Otherwise, it follows that $\omega=df$, where $f(x,y,z)$ is a polynomial of degree $3$. There are three possibilities for $f$:
\begin{enumerate}[(i)]
\item $f$ is irreducible.
\item $f=ph$, $p$ is irreducible, $\deg(p)=2$ and $\deg(h)=1$.
\item $f=h_1 h_2 h_3$, $\deg(h_i)=1,i=1,2,3$, and $h_i$ and $h_j$ are relatively prime, for $i \neq j$. The last condition is due to $\deg(\F)=2$.
\end{enumerate}

Note that item i) is excluded by our hypothesis, otherwise it would give rise to a first integral of the form $F/x_3^m$, where $m$ is the degree of $f$ and 
\[
F(x_0,\ldots,x_3)
=
x_3^m
f\biggl(
\frac{x_0}{x_3},
\frac{x_1}{x_3},
\frac{x_2}{x_3}
\biggr).
\]
If ii) holds, after an affine transformation we can assume that $h=x$, then
\[
\omega=pdx+xdp.
\]
Since $p$ is irreducible, we have that $p_y:=\frac{\partial p}{\partial y} \neq 0$ or $p_z:=\frac{\partial p}{\partial z} \neq 0$. Then
\[
X=p_z \frac{\partial}{\partial y} - p_y \frac{\partial}{\partial z}
\]
provides an element in $H^0(\mathbb{P}^3,\TF) -\{0\}$. Similarly, we can produce an element in $H^0(\mathbb{P}^3,\TF) -\{0\}$ if $f$ is as in iii).

If $\deg(\F)=2$ and $\F$ is not in a logarithmic component, then $\F$ is a linear pull-back from $\mathbb{P}^2$ or it belongs to the exceptional component. In the latter case, using the notation of \cite{CerveauLins96}, the exceptional component is the closure of the orbit of $\overline{\mathcal{F}(\Gamma)}$ under $\Aut(\mathbb{P}^3)$. Since $h_0(\overline{\mathcal{F}(\Gamma)}) \geq 1$ we conclude that $h_0(\F) \geq 1$.

Finally, if \eqref{3:C:low_cases} holds, due to the structure theorem for degree three foliations on $\mathbb{P}^3$ of \cite{LorayPereiraTouzet17}, we have that either $\F$ is a linear pull-back from $\mathbb{P}^2$, or there is a degree one foliation by algebraic curves tangent to $\F$. This finishes the proof.
\end{proof}

We can conclude from Corollary~\ref{C:low_cases} that if $\F$ is a codimension one foliation on $\mathbb{P}^3$ with locally free tangent sheaf and not admitting an invariant hypersurface, then $\TF$ splits provided that $\deg(\F) \leq 3$. 

So far, the known examples of codimension one foliations on $\mathbb{P}^3$ whose tangent sheaf is locally free and does not split can be found in \cite[Theorem~E]{FaMaJe2023}: For every $k \ge 0$ there are rational foliations of type $(k+3,k+3)$ satisfying such property. Thus all such examples have a rational first integral, and for $k=0$ the respective foliation has degree $4$. Based on this and Corollary~\ref{C:low_cases}, we finish by presenting the following:

\begin{problem}
Let $\F$ be a codimension one foliation on $\mathbb{P}^3$ such that $\TF$ is locally free. If $\F$ does not have an invariant hypersurface, is it true that $\TF$ splits? More generally, if $\F$ does not admit a rational first integral, is it true that $\TF$ splits?
\end{problem}

\end{document}